\title{An Adaptive Finite Difference Method for Total Variation Minimization}
\author{
  Thomas JACUMIN \\
  LTH, University of Lund, Sweden \\
  \texttt{thomas.jacumin@math.lth.se} \\
  \and
  \textbf{Andreas LANGER} \\ 
  LTH, University of Lund, Sweden \\
  \texttt{andreas.langer@math.lth.se}
}
\newcommand{\R}{\mathbb{R}}
\newcommand{\N}{\mathbb{N}}
\DeclareMathOperator{\divop}{div}
\DeclareMathOperator{\argmin}{argmin}
\DeclareMathOperator{\TV}{TV}
\DeclareMathOperator{\BV}{BV}
\newtheorem{proposition}{Proposition}[section]\surroundwithmdframed[style=thm]{proposition}
\newtheorem{lemma}{Lemma}[section]\surroundwithmdframed[style=thm]{lemma}
\newtheorem*{problem*}{Problem}\surroundwithmdframed[style=thm]{problem*}
\theoremstyle{remark}
\newtheorem*{note}{\textbf{Remark}}
\newtheorem*{example}{\textbf{Example}}
\begin{document}

\maketitle

\begin{abstract}
In this paper, we propose an adaptive finite difference scheme in order to numerically solve total variation type problems for image processing tasks. The automatic generation of the grid relies on indicators derived from a local estimation of the primal-dual gap error. This process leads in general to a non-uniform grid for which we introduce an adjusted finite difference method. Further we quantify the impact of the grid refinement on the respective discrete total variation. In particular, it turns out that a finer discretization may lead to a higher value of the discrete total variation for a given function. To compute a numerical solution on non-uniform grids we derive a semi-smooth Newton algorithm in 2D for scalar and vector-valued total variation minimization. We present numerical experiments for image denoising and the estimation of motion in image sequences to demonstrate the applicability of our adaptive scheme.
\end{abstract}

\keywords{Total variation, Non-smooth optimization, Image reconstruction, Optical flow estimation, Adaptive finite difference discretization}
\paragraph{\textit{Mathematics Subject Classification}} 49M29, 65K10, 90C47, 94A08 

\let\thefootnote\relax\footnotetext{The work was supported by the Crafoord Foundation through the project ``Locally Adaptive Methods for Free Discontinuity Problems''.}
\section{Introduction}

In \cite{Rudin1992} total variation (TV) was introduced as a regularization technique to imaging problems. 
Given the effectiveness of this regularization, in particular thanks to its property of preserving discontinuities in the solution, since then total variation is widely used in image processing. 
For example, it shows its efficiency for image inpainting \cite{Shen2002}, image denoising \cite{Buades2005, Chambolle1997}, image zooming \cite{Chambolle2004} and optical flow estimation \cite{Zach2007}, only to mention a few applications where total variation regularization is successfully applied. 
Thereby the problem typically states as follows: 
let $\Omega \subset \R^d$ be an open, bounded and simply connected domain with Lipschitz boundary, where $d\in\N$ denotes the spatial dimension, then one solves
\begin{equation}\label{Eq:ProblemTV}
\inf_{\mathbf{u}\in L^2(\Omega)^m\cap \BV(\Omega)^m} \mathcal{D}(T\mathbf{u}, g) + \lambda\int_\Omega|D \mathbf{u}|_{\text{F},r}. 
\end{equation}
Here $m\in\N$ denotes the number of channels, e.g.\ $m=1$ for scalar images or $m=d$ for motion fields, $g\in L^2(\Omega)$ is the given data, $T:L^2(\Omega)^m \to L^2(\Omega)$ is a bounded linear operator, $\mathcal{D}$ is a data fitting term enforcing the consistency between $g$ and the solution, and $\lambda>0$ is the regularization parameter. 
Further $\BV(\Omega)^m$ denotes the space of $m$-vector-valued functions with bounded variation, i.e.\ $\BV(\Omega)^m := \{ \mathbf{u}\in L^1(\Omega)^m\ |\ \int_\Omega |D\mathbf{u}|_{\text{F},r} < +\infty \}$ with $\int_\Omega |D\mathbf{u}|_{\text{F},r}$ being the total variation of $\mathbf{u}$ in $\Omega$ defined by 
$$ 
\int_\Omega |D\mathbf{u}|_{\text{F},r} := \sup \bigg\{ \int_\Omega \mathbf{u}\cdot \divop\mathbf{v}\ \bigg|\ \mathbf{v}\in\big(C_0^\infty(\Omega)^d\big)^m,\ |\mathbf{v}|_{*}\leq 1\ \text{for}\ \mu\text{-almost every}\ x\in\Omega \bigg\}. 
$$
The operator $\divop : \big(C^\infty_0(\Omega)^d\big)^m \to C^\infty_0(\Omega)^m$ describes the divergence, $|\cdot|_{\text{F},r} : \R^{d\times m} \to \R$ denotes the $r$-Frobenius norm defined as  
$$ 
|\mathbf{v}|_{\text{F},r} := \left( \int_\Omega \sum_{n=1}^m \sum_{k=1}^d |v_{nk}|^r \ dx \right)^{1/r}
$$
for $\mathbf{v}=\big((v_{11}, \hdots v_{1d}), \hdots, (v_{m1}, \hdots v_{md})\big)\in\big(C_0^\infty(\Omega)^d\big)^m$ and $r\geq 1$ , and $|\cdot|_{*}$ is the dual norm of the $r$-Frobenius norm.

In the presence of noise, the choice of the data fitting term $\mathcal{D}$ typically depends on the nature of the noise contamination. 
Specifically, a quadratic $L^2$-norm is used if the data is corrupted by Gaussian noise \cite{Chambolle2010}, whereas an $L^1$-norm is preferred for data affected by impulse noise \cite{Alliney1997, Nikolova2002, Nikolova2004}. Using these data fitting terms, we refer to \eqref{Eq:ProblemTV} as the $L^2$-TV model and the $L^1$-TV model, respectively. In cases where both Gaussian and impulse noise are present simultaneously in the data, a combined $L^1$/$L^2$ data fitting term has been shown to be effective \cite{Hintermuller2013, Langer2017Jul, Langer2019}. Then \eqref{Eq:ProblemTV} reads as
\begin{equation}\label{Eq:L1L2TV}
\inf_{\mathbf{u}\in L^2(\Omega)^m\cap \BV(\Omega)^m} \alpha_1\|T\mathbf{u} - g\|_{L^1(\Omega)} + \frac{\alpha_2}{2}\|T\mathbf{u} - g\|_{L^2(\Omega)}^2 + \lambda\int_\Omega|D \mathbf{u}|_{\text{F},r}, 
\end{equation}
where $\alpha_1,\alpha_2 \geq 0$, which we call the $L^1$-$L^2$-TV model. In this paper we will consider the $L^1$-$L^2$-TV model due to its generality, since it includes the $L^2$-TV model ($\alpha_1=0$) and the $L^1$-TV model ($\alpha_2=0$).

When looking at a concrete digital image, the underlying analog image is usually discretized by square pixels of uniform size. For further image enhancement (e.g.\ removing noise) or image analysis (e.g.\ motion estimation), this strongly supplies a finite difference implementation of image processing algorithms. Hence, for finding a numerical solution of \eqref{Eq:L1L2TV} this means discretizing \eqref{Eq:L1L2TV} by utilizing a finite difference method on a uniform grid. 
Despite the good quality of the reconstruction, using such a uniform discretization can lead to numerous degrees of freedom and, as a result, to an expensive computational cost, which is particularly problematic for large scale images.

It is worth mentioning that other discretization methods for total variation minimization have been considered, such as the finite element method \cite{Chambolle2021} and a neural network based method \cite{Langer2024}. In the finite element framework, adaptive discretization based on a posteriori error estimates has been employed to reduce the degrees of freedom in the problem \cite{Bartels2023, Hintermuller2014}.
The $L^1$-$L^2$-$\TV$ model has also been studied in this context \cite{Alkamper2024, Alkamper2017, Hilb2023Jul}. In particular, in \cite{Alkamper2024} an adaptive finite element coarse-to-fine scheme has been proposed showing a significant speed-up in computational time.
Challenges arise when selecting the finite element space. When using the space of piecewise constant functions, $\Gamma$-convergence of the finite dimensional problem to the infinite dimensional cannot be expected, as demonstrated by a counterexample in \cite{Bartels2012}. 
In contrast, such a convergence result holds when using the space of continuous and piecewise linear functions \cite{Langer2011, Langer2024}, though this approach prevents the reconstruction from exhibiting discontinuities \cite{Alkamper2017}. Furthermore, using finite differences for images may seem more natural than using finite elements, as the data (i.e.\ pixels) are organized on a uniform discrete grid. 

 Adaptive finite difference methods have long been explored in the literature for solving a variety of partial differential equations (PDEs); see, for example, \cite{Berger1989, Min2006} and references therein. More recently, in \cite{Oberman2016} a finite difference scheme on non-uniform grids within a general framework for solving nonlinear PDEs is introduced. Although this approach is quite flexible, it does not extend to total variation minimization. As such, to the best of our knowledge, no adaptive finite difference method has been proposed for \eqref{Eq:L1L2TV}.
 
In this paper, we suggest to solve the $L^1$-$L^2$-$\TV$ model by combining a finite difference method with an adaptive refinement strategy. This allows us on the one hand to reduce the number of degrees of freedom, while on the other hand we do not need to make a choice of a function space. In our refinement process we use a cell-centered sampling, that is the degrees of freedom are in the middle of the elements, allowing us to mimic a piecewise constant function, which is well suited for discontinuity-preserving problems.
Further we opted for a coarse-to-fine approach. That is we start with a coarse grid and subsequently subdivide the marked elements into four smaller elements to enhance the accuracy of the reconstruction. 
The marking of the elements is based on a local error indicator which is derived from a primal-dual gap error estimate following the idea in \cite{Bartels2020}. 
We note that the error of the total variation due to a finite difference discretization has been estimated in some works \cite{Caillaud2023, Wang2011}. 
However, since these estimates are global, they are not suitable for grid adaptivity.
For computing a solution on the obtained non-uniform grid we adapt the primal-dual semi-smooth Newton method derived and analyzed in \cite{Hilb2023Jul} to our finite difference setting. 

We theoretically analyze the influence of the grid refinement on a discretized total variation. From considerations in \cite{Caillaud2023, Condat2017} one implies that the discrete total variation of a function can be mesh dependent, although this is not highlighted there. In this paper we make an effort and quantify the impact of a refinement on the discrete total variation. In fact we derive a pixel-wise constant function that is bounded above by one, indicating that a refinement either keeps or increases the total variation of a function. 

The article is organized as follows: in Section \ref{sec:formulation}, we introduce the primal-dual setting of the considered problem before we address in Section \ref{sec:discretization} its discretization on non-uniform grids. Section \ref{sec:afdm} is devoted to the derivation of adaptive finite difference schemes. The impact of mesh refinement on the total variation is analyzed and quantified in Section \ref{sec:analysis-adaptive}. Section \ref{sec:error} focuses on computing the primal-dual gap error and proposes a local indicator essential for mesh adaptivity. In Section \ref{sec:algorithms}, we present a semi-smooth Newton algorithm to effectively solve the $L^1$-$L^2$-$\TV$ problem, followed by numerical experiments in Section \ref{sec:experiments} to validate our approach.

\section{Formulation of the Model}
\label{sec:formulation}

For a Hilbert space H we denote by $\langle \cdot, \cdot \rangle_H$ its inner product and by $\|\cdot\|_H$ its induced norm. The duality pairing between $H$ and its continuous dual space $H^*$ is written as $\langle \cdot, \cdot \rangle_{H,H^*}$. 
For functions $\mathbf{v}=(v_1,\hdots,v_m), \mathbf{w}=(w_1,\hdots,w_m)\in L^2(\Omega)^m$ we define
$$ 
\langle \mathbf{v}, \mathbf{w} \rangle_{L^2(\Omega)^m} := \sum_{k=1}^m \langle v_k, w_k \rangle_{L^2(\Omega), L^2(\Omega)} \qquad\text{and} \qquad \|\mathbf{v}\|_{L^2(\Omega)^m}^2 := \langle \mathbf{v}, \mathbf{v} \rangle_{L^2(\Omega)^m}, 
$$
and for $\mathbf{v}=(\mathbf{v}_1,\hdots,\mathbf{v}_m), \mathbf{w}=(\mathbf{w}_1,\hdots,\mathbf{w}_m)\in\big(L^2(\Omega)^d\big)^m$ we write
$$ \langle \mathbf{v}, \mathbf{w} \rangle_{\big(L^2(\Omega)^d\big)^m} := \sum_{k=1}^m \langle \mathbf{v_k}, \mathbf{w_k} \rangle_{L^2(\Omega)^d}\qquad \text{and}\qquad \|\mathbf{v}\|_{\big(L^2(\Omega)^d\big)^m}^2 := \langle \mathbf{v}, \mathbf{v} \rangle_{\big(L^2(\Omega)^d\big)^m}. $$
%
For a bounded and linear operator $\Lambda$ between two Banach spaces we denote by $\Lambda^*$ its adjoint operator.

As in \cite{Hilb2023Jul}, we consider the following penalized version of \eqref{Eq:L1L2TV}
\begin{equation} \tag{$\mathcal{P}$} \label{eq:primal}
	\inf_{\mathbf{u}\in H^1(\Omega)^m} \left\{ E(\mathbf{u}) := \alpha_1\|T\mathbf{u}-g\|_{L^1(\Omega)} + \frac{\alpha_2}{2} \|T\mathbf{u} - g\|_{L^2(\Omega)}^2 + \frac{\beta}{2} \|S\mathbf{u}\|_{V_S}^2 + \lambda\int_\Omega |\nabla \mathbf{u}|_{\text{F},r}\ dx \right\},
\end{equation}
where $S:H^1(\Omega)^m\to V_S$ is a bounded linear operator for some Hilbert space $V_S$ and $\beta\geq 0$ typically chosen very small such that \ref{eq:primal} is a close approximation of \eqref{Eq:L1L2TV}.
A primal-dual framework for \eqref{eq:primal} has been developed in \cite{Hilb2023Jul} for the following settings of $S$ and its related space $V_S$: 
\begin{itemize}
	\item $S=Id$, then $V_S\subseteq L^2(\Omega)^m$ and $\|\cdot\|_{V_S} = \|\cdot\|_{L^2(\Omega)^m}$,
	\item $S=\nabla$, then $V_S\subseteq \big(L^2(\Omega)^d\big)^m$ and $\|\cdot\|_{V_S} = \|\cdot\|_{\big(L^2(\Omega)^d\big)^m}$.
\end{itemize}
We suppose that $B:= \alpha_2 T^*T + \beta S^*S$ is invertible, which is, for example, the case for $\beta > 0$ and $S=I$, even if $T$ is not injective. Further, we define $ \|\cdot\|_{B^{-1}}^2 := \langle\ \cdot, B^{-1}\cdot\ \rangle_{L^2(\Omega)^m}$. The dual problem of \eqref{eq:primal} reads
\begin{equation} \tag{$\mathcal{D}$} \label{eq:dual}
	\begin{split}
		\sup_{(p_1,\mathbf{p_2})\in L^2(\Omega)\times \big(L^2(\Omega)^d\big)^m} \bigg\{ D(p_1,\mathbf{p_2}) := -\frac{1}{2}\|T^* p_1 + \nabla^*\mathbf{p_2} - \alpha_2 T^* g\|_{B^{-1}}^2 + \frac{\alpha_2}{2}\|g\|_{L^2(\Omega)}^2 - \langle g, p_1 \rangle_{L^2(\Omega)} \\ - \chi_{|p_1|\leq \alpha_1} - \chi_{|\mathbf{p_2}|_{F,s}\leq \lambda} \bigg\},
	\end{split}
\end{equation}
cf.\ \cite{Hilb2023Jul}, where $r^{-1} + s^{-1} = 1$.
The function $\chi$ is defined for a given predicate $\omega:\Omega\to \{\text{true}, \text{false}\}$ by
$$ \chi_\omega (x) := \begin{cases}
	0, & \text{if}\ \omega(x)\ \text{is true}, \\
	+\infty, & \text{otherwise},
\end{cases} $$
for $x\in\Omega$.
\section{Problem Discretization on Non-Uniform Grid}
\label{sec:discretization}

In this section, we introduce a finite difference discretization of \eqref{eq:primal}%
, focusing on the two-dimensional case ($d=2$), despite an extension to other dimensions seems possible. This discretization triangulates the domain $\Omega$ by a family of squares $(E_i)_{i=1}^N$, $N\in\N$ denoting the degrees of freedom, where element $E_i$ has side length $h_i$ and center $\mathbf{x}_i\in\Omega$. In addition, we define the discrete domain $\Omega_h$ as the collection of centers, i.e.\ $\Omega_h := \{ \mathbf{x}_i\in\Omega\ |\ i=1,\hdots,N \}$.
%

The mesh is implemented using a \textit{quad-tree} data structure \cite{DeBerg2000}. A quad-tree is a tree-shaped structure where each non-leaf node has four branches. Each node in the quad-tree represents a square. If a node has branches, then its branches represent the four smaller squares that make up the larger square represented by the node. We \textit{refine} an element by dividing it into four sub-elements, which involves adding four child nodes to its corresponding leaf node. To ensure specific mesh patterns and simplify the derivation of the finite difference scheme, we impose the following constraint: each element of the mesh must either be the same length as its neighboring elements or be twice or half as large. We refer to Figure \ref{fig:counter-example} for an example of such a mesh.
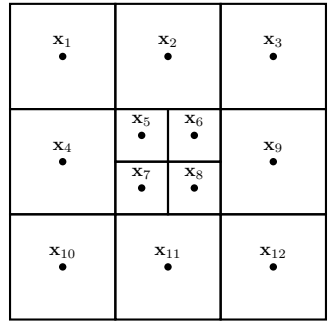
\begin{figure}[h]
	\centering
	\begin{tikzpicture}[thick,scale=0.7, every node/.style={scale=0.7}]
		\draw[draw=black] (1,1) rectangle ++(2,2);
		\draw[draw=black] (1,3) rectangle ++(2,2);
		\draw[draw=black] (1,5) rectangle ++(2,2);
		\draw[draw=black] (3,1) rectangle ++(2,2);
		\draw[draw=black] (3,3) rectangle ++(1,1);
		\draw[draw=black] (4,3) rectangle ++(1,1);
		\draw[draw=black] (3,4) rectangle ++(1,1);
		\draw[draw=black] (4,4) rectangle ++(1,1);
		\draw[draw=black] (3,5) rectangle ++(2,2);
		\draw[draw=black] (5,1) rectangle ++(2,2);
		\draw[draw=black] (5,3) rectangle ++(2,2);
		\draw[draw=black] (5,5) rectangle ++(2,2);
		\fill (2,6) circle[radius=2pt] node[yshift=8pt] {$\mathbf{x}_1$};
		\fill (4,6) circle[radius=2pt] node[yshift=8pt] {$\mathbf{x}_2$};
		\fill (6,6) circle[radius=2pt] node[yshift=8pt] {$\mathbf{x}_3$};
		\fill (2,4) circle[radius=2pt] node[yshift=8pt] {$\mathbf{x}_4$};
		\fill (3.5,4.5) circle[radius=2pt] node[yshift=8pt] {$\mathbf{x}_5$};
		\fill (4.5,4.5) circle[radius=2pt] node[yshift=8pt] {$\mathbf{x}_6$};
		\fill (3.5,3.5) circle[radius=2pt] node[yshift=8pt] {$\mathbf{x}_7$};
		\fill (4.5,3.5) circle[radius=2pt] node[yshift=8pt] {$\mathbf{x}_8$};
		\fill (6,4) circle[radius=2pt] node[yshift=8pt] {$\mathbf{x}_9$};
		\fill (2,2) circle[radius=2pt] node[yshift=8pt] {$\mathbf{x}_{10}$};
		\fill (4,2) circle[radius=2pt] node[yshift=8pt] {$\mathbf{x}_{11}$};
		\fill (6,2) circle[radius=2pt] node[yshift=8pt] {$\mathbf{x}_{12}$};
	\end{tikzpicture}
	\caption{Example of an adaptive grid.}
	\label{fig:counter-example}
\end{figure} 
A function $v$ mapping from $\Omega$ to $\R$ is discretized by a vector $v_h\in\R^N$ where each component $(v_h)_i$ is defined as 
$ h_i^{-2}\int_{E_i} v\ dx $
, for $i=1,\hdots,N$. The following scalar product and norm are then used for $v_h, w_h\in\R^N$
$$ \langle v_h, w_h \rangle_{L^2(\Omega_h)} := \sum_{i=1}^N h_i^2 (v_h)_i (w_h)_i,\quad\text{and}\quad \| v_h \|_{L^2(\Omega_h)}^2 := \langle v_h, v_h \rangle_{L^2(\Omega_h)}. $$
Note that the definition above is the exact numerical integration for a piecewise constant function, in particular $\langle v_h, w_h \rangle_{L^2(\Omega_h)} = \langle I_h v_h, I_h w_h \rangle_{L^2(\Omega)}$, where $I_h$ is the piecewise constant interpolation. Moreover, $I_h g_h$ is the $L^2$-projection of $g$ into the space of piecewise constant functions. The main motivation for such a choice of interpolation is that, since a digital image is a set of pixels, the discontinuities are located between the pixels. As a consequence, using a linear interpolation would lead to a smoothing of the discontinuities, which we want to avoid, cf. \cite[Figure~2]{Alkamper2017} in a finite element setting. We discretize a function $\mathbf{v}=(v_1,\hdots,v_m)$ from $\Omega$ to $\R^m$ by a vector $\mathbf{v_h}\in\R^{mN}$ such that $(\mathbf{v_h})_{(k-1)N+i} := v_k(\mathbf{x}_i)$, for $i=1,\hdots,N$ and $k=1,\hdots,m$. We use the following scalar product and norm for $\mathbf{v_h},\mathbf{w_h}\in\R^{mN}$
$$ \langle \mathbf{v_h}, \mathbf{w_h} \rangle_{L^2(\Omega_h)^m} := \sum_{i=1}^N h_i^2 \sum_{k=1}^m (\mathbf{v_h})_{(k-1)N+i} (\mathbf{w_h})_{(k-1)N+i}\quad\text{and}\quad \| \mathbf{v_h} \|_{L^2(\Omega_h)^m}^2 := \langle \mathbf{v_h}, \mathbf{v_h} \rangle_{L^2(\Omega_h)^m}. $$
%
%
A gradient-like function $\mathbf{w} = \big( (w_{11},w_{12}), \hdots, (w_{m1},w_{m2}) \big)$ from $\Omega$ to $(\R^2)^m$ is discretized by a vector $\mathbf{w_h}\in\R^{2 m N}$ such that $(\mathbf{w_h})_{2(n-1)N + (k-1)N + i} := w_{kn}(\mathbf{x}_i)$, for $n=1,\hdots,m$ and $k=1,2$. For $\mathbf{w_h},\mathbf{q_h}\in\R^{2mN}$, we introduce the scalar product
$$ \langle \mathbf{w_h}, \mathbf{q_h} \rangle_{\big(L^2(\Omega_h)^2\big)^m} := \sum_{i=1}^N h_i^2  \sum_{n=1}^m\sum_{k=1}^2 (\mathbf{w_h})_{2(n-1)N + (k-1)N + i} (\mathbf{q_h})_{2(n-1)N + (k-1)N + i},$$
and the norm
$$\| \mathbf{w_h} \|_{\big(L^2(\Omega_h)^2\big)^m}^2 := \langle \mathbf{w_h}, \mathbf{w_h} \rangle_{\big(L^2(\Omega_h)^2\big)^m}. $$
The discretization of the $r$-Frobenius norm becomes
$$ (|\mathbf{w_h}|_{\text{F},r})_i := \left( \sum_{n=1}^{m} \sum_{k=1}^{2} \big( (\mathbf{w_h})_{2(n-1)N + (k-1)N + i} \big)^r \right)^{1/r},\ \forall i\in\{1,\hdots,N\}. $$ 
%
%
Since discrete functions are represented by vectors in a finite difference framework, it is necessary to discretize the corresponding operators as well. We propose to define the discretization $T_h$ of $T$ such that, for a vector $v_h$, we have
$ (T_h v_h)_i := (T I_h v_h)(\mathbf{x}_i)$, for $ i\in\{1,\hdots,N\} $.
For our problem \eqref{eq:primal}, we represent by $g_h\in \R^N$ the discretization of the data $g$ and let $T_h \in \R^{N\times mN}$, $T_h^* \in \R^{mN\times N}$, $S_h \in \R^{N\times mN}$, $S_h^* \in \R^{mN\times N}$, $\nabla_h \in \R^{2mN\times mN}$, $\nabla_h^* \in \R^{mN\times 2mN}$, $B_h\in\R^{mN\times mN}$ and $B_h^{-1}\in\R^{mN\times mN}$ be the discretization of $T$, $T^*$, $S$, $S^*$, $\nabla$, $\nabla^*$, $B$ and $B^{-1}$ respectively. Note that $B_h^{-1}$ is not necessarily the inverse of $B_h$.
For $\mathbf{w_h}\in\R^{mN}$, we define $ \|\mathbf{w_h}\|_{B_h^{-1}}^2 := \langle \mathbf{w_h}, B_h^{-1} \mathbf{w_h} \rangle_{L^2(\Omega_h)^m} $.
%
With the above notations the discretized primal problem reads
\begin{equation} \tag{$\mathcal{P}_h$} \label{eq:primal_h}
	\min_{\mathbf{v_h}\in\R^{mN}} \bigg\{ E_h(\mathbf{v_h}) := \alpha_1\sum_{i=1}^N h_i^2 |(T_h\mathbf{v_h}-g_h)_i| + \frac{\alpha_2}{2} \| T_h\mathbf{v_h} - g_h \|_{L^2(\Omega_h)}^2 + \frac{\beta}{2} \| S_h\mathbf{v_h} \|_{V_{S,h}}^2 + \lambda \sum_{i=1}^N h_i^2 (|\nabla_h \mathbf{v_h}|_{\text{F},r})_i \bigg\},
\end{equation}
where $S_h$ and its associated spaces $V_{S,h}$ can be the following:
\begin{itemize}
	\item $S_h=Id$, then $V_{S,h}=\R^{mN}$ and $\|\cdot\|_{V_{S},h} = \|\cdot\|_{L^2(\Omega_h)^m}$,
	\item $S_h=\nabla_h$, then $V_{S,h}=\R^{2mN}$ and $\|\cdot\|_{V_{S,h}} = \|\cdot\|_{\big(L^2(\Omega_h)^d\big)^m}$.
\end{itemize}
Analogously, the discrete dual problem is given by
\begin{equation} \tag{$\mathcal{D}_h$} \label{eq:dual_h}
	\begin{split}
		\max_{(q_{h,1},\mathbf{q_{h,2}})\in\R^{N}\times\R^{2mN}} \bigg\{ D_h(q_{h,1},\mathbf{q_{h,2}}) := -\frac{1}{2}\| T_h^* q_{h,1} + \nabla_h^*\mathbf{q_{h,2}} - \alpha_2 T_h^* g_h \|_{B_h^{-1}}^2 + \frac{\alpha_2}{2} \| g_h \|_{L^2(\Omega_h)}^2 \\ - \langle g_h, q_{h,1} \rangle_{L^2(\Omega_h)} - \chi_{|q_{h,1}|\leq \alpha_1} - \chi_{|\mathbf{q_{h,2}}|_{F,s}\leq \lambda} \bigg\}.
	\end{split}
\end{equation}
Here, the predicate $|q_{h,1}|\leq \alpha_1$ is to be understood as a condition on the degree of freedom, i.e.\ $|(q_{h,1})_i|\leq \alpha_1$, for all $i=1,\hdots,N$. The same applies for the predicate $|\mathbf{q_{h,2}}|_{F,s}\leq \lambda$.

\section{Finite Difference Method on Non-uniform Grid}
\label{sec:afdm}

As traditional finite differences cannot be directly applied to non-uniform grids, we derive in this section a respective finite difference scheme for such grids
tailored to discretize gradient and divergence operators. For $\mathbf{x}=(x,y)\in\Omega_h$, we denote by $h$ the length of the element with center $\mathbf{x}$, and we set $\mathbf{x_\text{E}} := (x+h,y)\in\Omega_h$ (see Figure \ref{fig:afd-regular}(a)) and $\mathbf{x_\text{S}} := (x,y+h)\in\Omega_h$ (Figure \ref{fig:afd-regular}(b)). A simple application of Taylor expansion yields the forward $x$- and $y$-derivative of $u\in C^1(\Omega)$ at $\mathbf{x}$, given by
$$ \partial_x u(\mathbf{x}) = \frac{u(\mathbf{x_\text{E}}) - u(\mathbf{x})}{h} + O(h) \qquad \text{and} \qquad \partial_y u(\mathbf{x}) = \frac{u(\mathbf{x_\text{S}}) - u(\mathbf{x})}{h} + O(h). $$
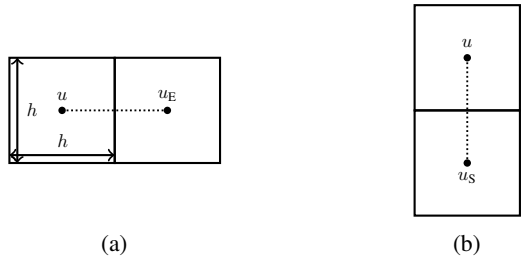
\begin{figure}[h]
	\centering
	\subfloat[]{ \label{fig:afd-regular:x}
		\begin{tikzpicture}[thick,scale=0.7, every node/.style={scale=0.7}]
			\draw[draw=white] (-0.7,-1.1) rectangle ++(5.4,4.2);
			\draw[draw=black] (0,0) rectangle ++(2,2);
			\draw[draw=black] (2,0) rectangle ++(2,2);
			\fill (1,1) circle[radius=2pt] node[yshift=8pt] {$u$};
			\fill (3,1) circle[radius=2pt] node[yshift=8pt] {$u_\text{E}$};
			\draw[line width=0.25mm, densely dotted] (1,1) -- (3,1);
			\draw[<->] (0,0.15) -- (2,0.15) node[yshift=8pt, xshift=-28pt] {$h$};
			\draw[<->] (0.15,0) -- (0.15,2) node[yshift=-28pt, xshift=8pt] {$h$};
		\end{tikzpicture}
	}
	\qquad
	\subfloat[]{ \label{fig:afd-regular:y}
		\begin{tikzpicture}[thick,scale=0.7, every node/.style={scale=0.7}]
			\draw[draw=white] (-0.7,-1.1) rectangle ++(5.4,4.2);
			\draw[draw=black] (1,1) rectangle ++(2,2);
			\draw[draw=black] (1,-1) rectangle ++(2,2);
			\fill (2,2) circle[radius=2pt] node[yshift=8pt] {$u$};
			\fill (2,0) circle[radius=2pt] node[yshift=-8pt] {$u_\text{S}$};
			\draw[line width=0.25mm, densely dotted] (2,2) -- (2,0);
		\end{tikzpicture}
	}
	\caption{Illustration of a regular node with $u:=u(\mathbf{x})$, $u_\text{E}:=u(\mathbf{x_\text{E}})$ and $u_\text{S}:=u(\mathbf{x_\text{S}})$.}
	\label{fig:afd-regular}
\end{figure}

In the following, we call \textit{dangling nodes} the nodes that have neighbors of a different size than its element's size. Since we assume that neighboring elements are only allowed to be one level finer or coarser, see Figure \ref{fig:counter-example}, we end up with 3 different dangling nodes.

 We call constellations as depicted in Figure \ref{fig:afd-dangling-1} \textit{dangling node 1}. To obtain a finite difference derivative for the situation in Figure \ref{fig:afd-dangling-1:x}, for $\mathbf{x}=(x,y)\in\Omega_h$, we set $\mathbf{x_\text{NE}} := (x+\frac{3}{4}h, y - \frac{1}{4}h)\in\Omega_h$ and $\mathbf{x_\text{SE}} := (x+\frac{3}{4}h, y + \frac{1}{4}h)\in\Omega_h$. Applying Taylor expansion, we get
$$ u(\mathbf{x_\text{NE}}) = u(\mathbf{x}) + \frac{3}{4}h\,\partial_x u(\mathbf{x}) - \frac{1}{4}h\,\partial_y u(\mathbf{x}) + O(h^2), \quad  u(\mathbf{x_\text{SE}}) = u(\mathbf{x}) + \frac{3}{4}h\,\partial_x u(\mathbf{x}) + \frac{1}{4}h\,\partial_y u(\mathbf{x}) + O(h^2). $$
Summing up the latter two equations yields the forward $x$-derivative of $u$ at $\mathbf{x}$, given by
$$ \partial_x u(\mathbf{x}) = \frac{\frac{u(\mathbf{x_\text{NE}}) + u(\mathbf{x_\text{SE}})}{2} - u(\mathbf{x})}{(3/4)\,h} + O(h). $$
Similarly, by denoting $\mathbf{x_\text{SW}} := (x-\frac{1}{4}h,y+\frac{3}{4}h)\in\Omega_h$ and $\mathbf{x_\text{SE}} := (x+\frac{1}{4}h,y+\frac{3}{4}h)\in\Omega_h$ (Figure \ref{fig:afd-dangling-1:y}), the forward $y$-derivative of $u$ at $\mathbf{x}$ reads
$$ \partial_y u(\mathbf{x}) = \frac{\frac{u(\mathbf{x_\text{SW}}) + u(\mathbf{x_\text{SE}})}{2} - u(\mathbf{x})}{(3/4)\,h} + O(h). $$ 
\begin{figure}[h]
	\centering
	\subfloat[]{ \label{fig:afd-dangling-1:x}
		\begin{tikzpicture}[thick,scale=0.7, every node/.style={scale=0.7}]
			\draw[draw=white] (-0.7,-1.1) rectangle ++(5.4,4.2);
			\draw[draw=black] (0,0) rectangle ++(2,2);
			\draw[draw=black] (2,0) rectangle ++(1,1);
			\draw[draw=black] (3,0) rectangle ++(1,1);
			\draw[draw=black] (2,1) rectangle ++(1,1);
			\draw[draw=black] (3,1) rectangle ++(1,1);
			\fill (1,1) circle[radius=2pt] node[yshift=8pt] {$u$};
			\fill (2.5,1.5) circle[radius=2pt] node[yshift=8pt] {$u_\text{NE}$};
			\fill (2.5,0.5) circle[radius=2pt] node[yshift=-8pt] {$u_\text{SE}$};
			\fill (2.5,1) circle[radius=2pt] node[xshift=8pt, yshift=8pt] {$u'$};
			\draw[line width=0.25mm, densely dotted] (2.5,1.5) -- (2.5,0.5);
			\draw[line width=0.25mm, densely dotted] (1,1) -- (2.5,1);
			\draw[<->] (0,0.15) -- (2,0.15) node[yshift=8pt, xshift=-28pt] {$h$};
			\draw[<->] (0.15,0) -- (0.15,2) node[yshift=-28pt, xshift=8pt] {$h$};
		\end{tikzpicture}
	}
	\qquad
	\subfloat[]{ \label{fig:afd-dangling-1:y}
		\begin{tikzpicture}[thick,scale=0.7, every node/.style={scale=0.7}]
			\draw[draw=white] (-0.7,-1.1) rectangle ++(5.4,4.2);
			\draw[draw=black] (1,1) rectangle ++(2,2);
			\draw[draw=black] (1,-1) rectangle ++(1,1);
			\draw[draw=black] (2,-1) rectangle ++(1,1);
			\draw[draw=black] (1,0) rectangle ++(1,1);
			\draw[draw=black] (2,0) rectangle ++(1,1);
			\fill (2,2) circle[radius=2pt] node[yshift=8pt] {$u$};
			\fill (1.5,0.5) circle[radius=2pt] node[yshift=-8pt] {$u_\text{SW}$};
			\fill (2.5,0.5) circle[radius=2pt] node[yshift=-8pt] {$u_\text{SE}$};
			\fill (2,0.5) circle[radius=2pt] node[yshift=8pt, xshift=8pt] {$u'$};
			\draw[line width=0.25mm, densely dotted] (1.5,0.5) -- (2.5,0.5);
			\draw[line width=0.25mm, densely dotted] (2,2) -- (2,0);
		\end{tikzpicture}
	}
	\caption{Illustration of a \textit{dangling node 1} with $u:=u(\mathbf{x})$. For (a): $u_\text{NE}:=u(\mathbf{x_\text{NE}})$, $u_\text{SE}:=u(\mathbf{x_\text{SE}})$ and $u':=\frac{1}{2}(u_\text{NE}+u_\text{SE})$. For (b): $u_\text{SW}:=u(\mathbf{x_\text{SW}})$, $u_\text{SE}:=u(\mathbf{x_\text{SE}})$ and $u':=\frac{1}{2}(u_\text{SW}+u_\text{SE})$.}
	\label{fig:afd-dangling-1}
\end{figure}
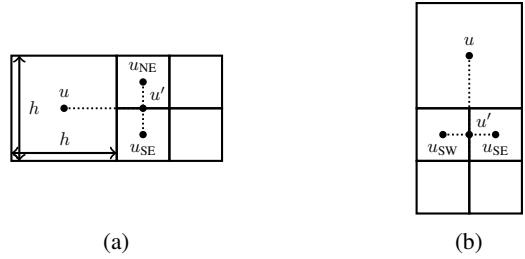

We call \textit{dangling node 2} the patterns as shown in Figure \ref{fig:afd-dangling-2-3}(a) and (b). To determine the finite difference derivative for the case illustrated in Figure \ref{fig:afd-dangling-2-3}(a), we set for $(x,y)\in\Omega_h$, $\mathbf{x_\text{N}} := (x,y-h)\in\Omega_h$ and $\mathbf{x_\text{E}} := (x+\frac{3}{2}h, y-\frac{1}{2}h)\in\Omega_h$. Applying Taylor expansion results in
\begin{equation} \label{eq:afd-dangling-2:1}
	u(\mathbf{x_\text{N}}) = u(\mathbf{x}) - h\, \partial_y u(\mathbf{x}) + O(h^2),
\end{equation}
\begin{equation} \label{eq:afd-dangling-2:2}
	u(\mathbf{x_\text{E}}) = u(\mathbf{x}) + \frac{3}{2}h\, \partial_x u(\mathbf{x}) - \frac{1}{2} h \,\partial_y u(\mathbf{x}) + O(h^2).
\end{equation}
We subtract two times \eqref{eq:afd-dangling-2:2} from \eqref{eq:afd-dangling-2:1} and we get the forward $x$-derivative of $u$ at $\mathbf{x}$
$$ \partial_x u(\mathbf{x}) = \frac{2 u(\mathbf{x_\text{E}}) - u(\mathbf{x_\text{N}}) - u(\mathbf{x})}{3\,h} + O(h). $$
In a similar manner, setting $\mathbf{x_\text{S}} := (x+\frac{1}{2}h, y+\frac{3}{2}h)\in\Omega_h$ and $\mathbf{x_\text{E}} := (x+h,y)\in\Omega_h$ (Figure \ref{fig:afd-dangling-2-3}(b)), we obtain the forward $y$-derivative of $u$ at $\mathbf{x}$
$$ \partial_y u(\mathbf{x}) = \frac{2 u(\mathbf{x_\text{S}}) - u(\mathbf{x_\text{E}})- u(\mathbf{x})}{3\,h} + O(h). $$
\begin{figure}[h]
	\centering
	\subfloat[]{ \label{fig:afd-dangling-2:x}
		\begin{tikzpicture}[thick,scale=0.7, every node/.style={scale=0.7}]
			\draw[draw=white] (-0.7,-1.1) rectangle ++(5.4,4.2);
			\draw[draw=black] (0,0) rectangle ++(1,1);
			\draw[draw=black] (1,0) rectangle ++(1,1);
			\draw[draw=black] (0,1) rectangle ++(1,1);
			\draw[draw=black] (1,1) rectangle ++(1,1);
			\draw[draw=black] (2,0) rectangle ++(2,2);
			\fill (1.5,0.5) circle[radius=2pt] node[yshift=8pt] {$u$};
			\fill (1.5,1.5) circle[radius=2pt] node[yshift=8pt] {$u_\text{N}$};
			\fill (3,1) circle[radius=2pt] node[yshift=8pt] {$u_\text{E}$};
			\fill (4.5,0.5) circle[radius=2pt] node[yshift=8pt] {$u'$};
			\draw[line width=0.25mm, densely dotted] (1.5,0.5) -- (4.5,0.5);
			\draw[line width=0.25mm, densely dotted] (1.5,1.5) -- (4.5,0.5);
			\draw[<->] (1,0.15) -- (2,0.15) node[yshift=-10pt, xshift=-14pt] {$h$};
			\draw[<->] (1.15,0) -- (1.15,1) node[yshift=-14pt, xshift=-12pt] {$h$};
		\end{tikzpicture}
	}
	\subfloat[]{ \label{fig:afd-dangling-2:y}
		\begin{tikzpicture}[thick,scale=0.7, every node/.style={scale=0.7}]
			\draw[draw=white] (-0.7,-1.1) rectangle ++(5.4,4.2);
			\draw[draw=black] (1,1) rectangle ++(1,1);
			\draw[draw=black] (2,1) rectangle ++(1,1);
			\draw[draw=black] (1,2) rectangle ++(1,1);
			\draw[draw=black] (2,2) rectangle ++(1,1);
			\draw[draw=black] (1,-1) rectangle ++(2,2);
			\fill (1.5,1.5) circle[radius=2pt] node[yshift=8pt] {$u$};
			\fill (2.5,1.5) circle[radius=2pt] node[yshift=8pt] {$u_\text{E}$};
			\fill (2,0) circle[radius=2pt] node[xshift=8pt] {$u_\text{S}$};
			\fill (1.5,-1.5) circle[radius=2pt] node[yshift=6pt, xshift=-8pt] {$u'$};
			\draw[line width=0.25mm, densely dotted] (1.5,1.5) -- (1.5,-1.5);
			\draw[line width=0.25mm, densely dotted] (2.5,1.5) -- (1.5,-1.5);
		\end{tikzpicture}
	}
	\hfill
	\subfloat[]{ \label{fig:afd-dangling-3:x}
		\begin{tikzpicture}[thick,scale=0.7, every node/.style={scale=0.7}]
			\draw[draw=white] (-0.7,-1.1) rectangle ++(5.4,4.2);
			\draw[draw=black] (0,0) rectangle ++(1,1);
			\draw[draw=black] (1,0) rectangle ++(1,1);
			\draw[draw=black] (0,1) rectangle ++(1,1);
			\draw[draw=black] (1,1) rectangle ++(1,1);
			\draw[draw=black] (2,0) rectangle ++(2,2);
			\fill (1.5,0.5) circle[radius=2pt] node[yshift=-8pt] {$u_\text{S}$};
			\fill (1.5,1.5) circle[radius=2pt] node[yshift=8pt] {$u$};
			\fill (3,1) circle[radius=2pt] node[yshift=8pt] {$u_\text{E}$};
			\fill (4.5,1.5) circle[radius=2pt] node[yshift=8pt] {$u'$};
			\draw[line width=0.25mm, densely dotted] (1.5,1.5) -- (4.5,1.5);
			\draw[line width=0.25mm, densely dotted] (1.5,0.5) -- (4.5,1.5);
			\draw[<->] (1,1.15) -- (2,1.15) node[yshift=-10pt, xshift=-14pt] {$h$};
			\draw[<->] (1.15,1) -- (1.15,2) node[yshift=-14pt, xshift=-12pt] {$h$};
		\end{tikzpicture}
	}
	\subfloat[]{ \label{fig:afd-dangling-3:y}
		\begin{tikzpicture}[thick,scale=0.7, every node/.style={scale=0.7}]
			\draw[draw=white] (-0.7,-1.1) rectangle ++(5.4,4.2);
			\draw[draw=black] (1,1) rectangle ++(1,1);
			\draw[draw=black] (2,1) rectangle ++(1,1);
			\draw[draw=black] (1,2) rectangle ++(1,1);
			\draw[draw=black] (2,2) rectangle ++(1,1);
			\draw[draw=black] (1,-1) rectangle ++(2,2);
			\fill (1.5,1.5) circle[radius=2pt] node[yshift=8pt] {$u_\text{W}$};
			\fill (2.5,1.5) circle[radius=2pt] node[yshift=8pt] {$u$};
			\fill (2,0) circle[radius=2pt] node[xshift=-8pt] {$u_\text{S}$};
			\fill (2.5,-1.5) circle[radius=2pt] node[yshift=6pt, xshift=8pt] {$u'$};
			\draw[line width=0.25mm, densely dotted] (1.5,1.5) -- (2.5,-1.5);
			\draw[line width=0.25mm, densely dotted] (2.5,1.5) -- (2.5,-1.5);
		\end{tikzpicture}
	}
	\caption{Illustration of a \textit{dangling node 2} (a)-(b) and \textit{dangling node 3} (c)-(d) with $u:=u(\mathbf{x})$. For (a): $u_\text{E}:=u(\mathbf{x_\text{E}})$, $u_\text{N}:=u(\mathbf{x_\text{N}})$ and $u':=2u_\text{E}-u_\text{N}$. For (b): $u_\text{S}:=u(\mathbf{x_\text{S}})$, $u_\text{E}:=u(\mathbf{x_\text{E}})$ and $u':=2u_\text{S}-u_\text{E}$. For (c): $u_\text{E}:=u(\mathbf{x_\text{E}})$, $u_\text{S}:=u(\mathbf{x_\text{S}})$ and $u':=2u_\text{E}-u_\text{S}$. For (d): $u_\text{S}:=u(\mathbf{x_\text{S}})$, $u_\text{W}:=u(\mathbf{x_\text{W}})$ and $u':=2u_\text{S}-u_\text{W}$.}
	\label{fig:afd-dangling-2-3}
\end{figure}
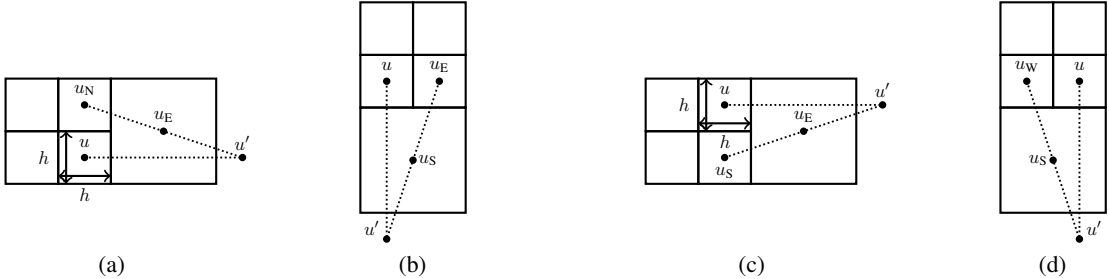

We refer to configurations as depicted in Figure \ref{fig:afd-dangling-2-3}(c) and (d) as \textit{dangling node 3}. To calculate a finite difference derivative for the scenario shown in Figure \ref{fig:afd-dangling-2-3}(c), we set, for $(x,y)\in\Omega_h$, $\mathbf{x_\text{S}} := (x,y+h)\in\Omega_h$ and $\mathbf{x_\text{E}} := (x+\frac{3}{2}h, y+\frac{1}{2}h)\in\Omega_h$. We use Taylor expansion, and 
we obtain the forward $x$-derivative of $u$ at $\mathbf{x}$, given by
$$ \partial_x u(\mathbf{x}) = \frac{2 u(\mathbf{x_\text{E}}) - u(\mathbf{x_\text{S}}) - u(\mathbf{x})}{3\,h} + O(h). $$
Likewise, setting $\mathbf{x_\text{S}} := (x-\frac{1}{2}h, x+\frac{3}{2}h)\in\Omega_h$ and $\mathbf{x_\text{W}} := (x-h,y)\in\Omega_h$ (Figure \ref{fig:afd-dangling-2-3}(d)), we have the forward derivative of $u$ with respect to $y$ at $\mathbf{x}$
$$ \partial_y u(\mathbf{x}) = \frac{2 u(\mathbf{x_\text{S}}) - u(\mathbf{x_\text{W}}) - u(\mathbf{x})}{3\,h} + O(h). $$
To sum up, if we denote $u_\square := u(\mathbf{x_\square})$ where $\square\in\{\text{N},\text{E},\text{S},\text{W},\text{NE},\text{NW},\text{SE},\text{SW}\}$ and $h$ is the length of the element with center $\mathbf{x}\in\Omega_h$, we define $\partial_x^+ u$ as the discrete forward $x$-derivative of $u$ at $\mathbf{x}$, $\partial_y^+ u$ as the discrete forward $y$-derivative of $u$ at $\mathbf{x}$, $\partial_x^- u$ as the discrete backward $x$-derivative of $u$ at $\mathbf{x}$ and $\partial_y^- u$ as the discrete backward $y$-derivative of $u$ at $\mathbf{x}$ by:

\textbullet ~ on \textit{regular node} (Figure \ref{fig:afd-regular}):
$$ \partial_x^+ u = \frac{u_\text{E} - u}{h}, \quad \partial_y^+ u = \frac{u_\text{S} - u}{h}, \quad\text{and}\qquad \partial_x^- u = \frac{u - u_\text{W}}{h}, \quad \partial_y^- u = \frac{u - u_\text{N}}{h}. $$
\textbullet ~ on \textit{dangling node 1} (Figure \ref{fig:afd-dangling-1}):
$$ \partial_x^+ u = \frac{\frac{u_\text{NE} + u_\text{SE}}{2} - u}{(3/4)\,h}, \quad \partial_y^+ u = \frac{\frac{u_\text{SW} + u_\text{SE}}{2} - u}{(3/4)\,h}, \qquad\text{and}\quad \partial_x^- u = \frac{u - \frac{u_\text{NW} + u_\text{SW}}{2}}{(3/4)\,h}, \quad \partial_y^- u = \frac{u - \frac{u_\text{NW} + u_\text{NE}}{2}}{(3/4)\,h}. $$
\textbullet ~ on \textit{dangling node 2} (Figure \ref{fig:afd-dangling-2-3}(a) and (b)):
$$ \partial_x^+ u = \frac{2 u_\text{E} - u_\text{N} - u}{3\,h}, \quad \partial_y^+ u = \frac{2 u_\text{S} - u_\text{E} - u}{3\,h}, \qquad\text{and}\quad \partial_x^- u = \frac{u -2 u_\text{W} + u_\text{N}}{3\,h}, \quad \partial_y^- u = \frac{u -2 u_\text{N} + u_\text{E}}{3\,h}. $$
\textbullet ~ on \textit{dangling node 3} (Figure \ref{fig:afd-dangling-2-3}(c) and (d)):
$$ \partial_x^+ u = \frac{2 u_\text{E} - u_\text{S} - u}{3\,h}, \quad \partial_y^+ u = \frac{2 u_\text{S} - u_\text{W} - u}{3\,h}, \quad\text{and}\qquad \partial_x^- u = \frac{u - 2 u_\text{W} + u_\text{S}}{3\,h}, \quad \partial_y^- u = \frac{u - 2 u_\text{N} + u_\text{W}}{3\,h}. $$
We define the discrete gradient operator $\nabla_h$ by the concatenation of the forward $x$-derivative and the $y$-derivative described above for all $\mathbf{x}\in\Omega_h$, i.e.\ $\nabla_h := (\partial_x^+\ \ \partial_y^+)^T$ . Similarly, we define the discrete divergence operator $\divop_h$ by $\divop_h := (\partial_x^-\ \ \partial_y^-)$.

On a uniform mesh in our discrete setting, the gradient and divergence operators are adjoint to each other, just as they are in the continuous case, i.e.\ we have $(\nabla_h)^* = -\divop_h$ when adding homogeneous Neumann boundaries condition on the discrete gradient and homogeneous Dirichlet boundaries condition on the discrete divergence. However, on non-uniform meshes, the discrete gradient and divergence operators are no longer adjoint. Indeed, let us consider the grid in Figure \ref{fig:counter-example}. In this particular example
one can compute the discrete operators
for $m=1$, and show that $(\nabla_h)^* = (\nabla_h)^T \neq -\divop_h = (\nabla^*)_h$. Note that the order of the operations matters: discretizing and computing the adjoint is different from taking the adjoint and then discretize.

Based on the above derived forward and backward finite differences on non-uniform grids, we additionally propose centered finite difference schemes for non-uniform grids. In particular, these centered finite difference schemes $\partial_x^\text{c}$ and $\partial_y^\text{c}$ are defined as the average of the forward and backward difference scheme, i.e.
$$ \partial_x^\text{c} := \frac{1}{2}(\partial_x^+ + \partial_x^-)\qquad\text{and}\qquad \partial_y^\text{c} := \frac{1}{2}(\partial_y^+ + \partial_y^-). $$

\section{Impact of Refinement on the Total Variation} \label{sec:analysis-adaptive}

In \cite{Condat2017} it has been shown that the discrete total variation is dependent on the number of points in the discrete domain $\Omega_h$. Here, we aim to quantify the change of the discrete total variation with respect to the grid. Let $\Omega_h$ be any quad-mesh and let us consider its one-time refinement (we refine at most once every element) $\widetilde{\Omega_h}$ (see for instance Figure \ref{fig:counter-example-weight}). We denote by $I$ (resp. $\widetilde I$) the set of indices $\{1,\hdots,N\}$ (resp. $\{1,\hdots,\widetilde N\}$), where $N$ (resp. $\widetilde{N}$) is the number of degree of freedom of $\Omega_h$ (resp. $\widetilde{\Omega_h}$). For vectors $\mathbf{u_h}\in\R^{mN}$ and $\widetilde{ \mathbf{u_h}}\in\R^{m\widetilde{N}}$, we define respective discrete (weighted) total variations by
$$ \TV(\mathbf{u_h}) := \sum_{i\in I} h_i^2 (|\nabla_h \mathbf{u_h}|_{F,r})_i, \qquad\text{and}\qquad \widetilde\TV_{\mu}(\widetilde{\mathbf{u_h}}) := \sum_{i\in\widetilde I} \widetilde{h_i}^2 \mu_i (|\widetilde{\nabla_h} \widetilde{\mathbf{u_h}}|_{F,r})_i, $$
where $\mu\in\R^{\widetilde{N}}$ and $\nabla_h,\widetilde{\nabla_h}$ are any suitable discrete gradient on $\Omega_h$ and on $\widetilde{\Omega_h}$.
We denote by $\mathcal{R}$ the set of indices of elements of $\Omega_h$ that have been refined to create the mesh $\widetilde{\Omega_h}$
and we denote by $\mathcal{N}$ the indices of the refined elements and their neighbors (north, east, south and west), i.e. $ \mathcal{N} := \{ i\in I\ |\ \exists j\in\mathcal{R},\ E_i\ \text{share an edge with}\ E_j \} $.
Thus we can write
$$ \TV(\mathbf{u_h}) = \sum_{i\in I\setminus\mathcal{N}} h_i^2 (|\nabla_h \mathbf{u_h}|_{F,r})_i + \sum_{i\in \mathcal{N}\setminus\mathcal{R}} h_i^2 (|\nabla_h \mathbf{u_h}|_{F,r})_i + \sum_{i\in \mathcal{R}} h_i^2 (|\nabla_h \mathbf{u_h}|_{F,r})_i. $$
We define the mapping $\widetilde\sigma$ which maps the indices of an element of $\Omega_h$ to the indices of the corresponding (or the $4$ corresponding) element(s) of $\widetilde{\Omega_h}$, i.e.\ for $i\in I$
 $$ \widetilde\sigma(i)=\begin{cases}
	\{i\}, & \text{if}\ i\not\in\mathcal{R}, \\
	\{i_1,i_2,i_3,i_4\}, & \text{if}\ i\in\mathcal{R},
\end{cases} 
$$
where $i_k\in\widetilde{I}$, $k=1,\hdots,4$, are the indices of the sub-elements.
For $J\subseteq I$, let us define $ \widetilde\sigma(J) = \bigcup_{j\in J}\widetilde\sigma(j) $.
Note that $\widetilde{I} = \widetilde\sigma(I) = \widetilde\sigma\big((I\setminus\mathcal{N})\cup (\mathcal{N}\setminus\mathcal{R})\cup \mathcal{R}\big) = \widetilde\sigma(I\setminus\mathcal{N})\cup \widetilde\sigma(\mathcal{N}\setminus\mathcal{R})\cup\widetilde\sigma( \mathcal{R}) = (I\setminus\mathcal{N}) \cup (\mathcal{N}\setminus\mathcal{R})\cup\widetilde\sigma( \mathcal{R})$.
Since $\widetilde{h_i} = h_i$ for $i\in I\setminus\mathcal{R}$ and $\widetilde{h_i} = \frac{h_i}{2}$ for $i\in\mathcal{R}$, we obtain
\begin{align*}
	\widetilde\TV_{\mu}(\widetilde{\mathbf{u_h}}) & = \sum_{i\in I\setminus\mathcal{N}} \widetilde{h_i}^2 \mu_i (|\widetilde{\nabla_h} \widetilde{\mathbf{u_h}}|_{F,r})_i + \sum_{i\in \mathcal{N}\setminus\mathcal{R}} \widetilde{h_i}^2 \mu_i (|\widetilde{\nabla_h} \widetilde{\mathbf{u_h}}|_{F,r})_i + \sum_{i\in \widetilde\sigma(\mathcal{R})} \widetilde{h_i}^2 \mu_i (|\widetilde{\nabla_h}\widetilde{\mathbf{u_h}}|_{F,r})_i \\
	%
	%
	& = \sum_{i\in I\setminus\mathcal{N}} h_i^2 \mu_i (|\widetilde{\nabla_h} \widetilde{\mathbf{u_h}}|_{F,r})_i + \sum_{i\in \mathcal{N}\setminus\mathcal{R}} h_i^2 \mu_i (|\widetilde{\nabla_h} \widetilde{\mathbf{u_h}}|_{F,r})_i + \sum_{i\in \mathcal{R}} \frac{h_i^2}{4} \sum_{k=1}^4 \mu_{i_k} (|\widetilde{\nabla_h} \widetilde{\mathbf{u_h}}|_{F,r})_{i_k}.
\end{align*}
We denote by $\pi: \R^N\to\R^{\widetilde{N}}$ the projection of a discrete function on $\Omega_h$ into $\widetilde{\Omega_h}$ and by $\widetilde\pi: \R^{\widetilde{N}}\to\R^N$ the projection of a discrete function on $\widetilde{\Omega_h}$ into $\Omega_h$. More precisely, for $i\in I$
$$ (\pi \mathbf{u_h})_j = (\mathbf{u_h})_i,\ \text{for}\ j\in\widetilde\sigma(i) \qquad\text{and}\qquad (\widetilde\pi \widetilde{\mathbf{u_h}})_i = \frac{1}{\sharp \widetilde\sigma(i)}\sum_{j\in\widetilde\sigma(i)} (\widetilde{\mathbf{u_h}})_j, $$
where $\sharp \widetilde\sigma(i)$ is the cardinality of $\widetilde\sigma(i)$. Then, using that $(|\widetilde{\nabla_h} \pi \mathbf{u_h}|_{F,r})_i = (|\nabla_h \mathbf{u_h}|_{F,r})_i$, for $i\in I\setminus\mathcal{N}$, it yields
\begin{align*}
	\TV(\mathbf{u_h}) - \widetilde\TV_{\mu}(\pi \mathbf{u_h}) & = \sum_{i\in I\setminus\mathcal{N}} h_i^2(|\nabla_h \mathbf{u_h}|_{F,r})_i + \sum_{i\in \mathcal{N}\setminus\mathcal{R}} h_i^2 (|\nabla_h \mathbf{u_h}|_{F,r})_i + \sum_{i\in \mathcal{R}} h_i^2 (|\nabla_h \mathbf{u_h}|_{F,r})_i \\ & \hspace{1.5cm} - \sum_{i\in I\setminus\mathcal{N}} h_i^2 \mu_i (|\widetilde{\nabla_h} \pi \mathbf{u_h}|_{F,r})_i - \sum_{i\in \mathcal{N}\setminus\mathcal{R}} h_i^2 \mu_i (|\widetilde{\nabla_h} \pi \mathbf{u_h}|_{F,r})_i \\ & \hspace{1.5cm} - \sum_{i\in \mathcal{R}} \frac{h_i^2}{4} \sum_{k=1}^4 \mu_{i_k} (|\widetilde{\nabla_h} \pi \mathbf{u_h}|_{F,r})_{i_k} \\
	%
	%
	& = \sum_{i\in I\setminus\mathcal{N}} h_i^2 (1-\mu_i) (|\nabla_h \mathbf{u_h}|_{F,r})_i + \sum_{i\in \mathcal{N}\setminus\mathcal{R}} h_i^2 \big((|\nabla_h \mathbf{u_h}|_{F,r})_i - \mu_i (|\widetilde{\nabla_h} \pi \mathbf{u_h}|)_i\big) \\ & \hspace{1.5cm} + \sum_{i\in \mathcal{R}} h_i^2 \left( (|\nabla_h \mathbf{u_h}|_{F,r})_i - \frac{1}{4} \sum_{k=1}^4 \mu_{i_k} (|\widetilde{\nabla_h} \pi \mathbf{u_h}|_{F,r})_{i_k}\right).
\end{align*}
In order to see the quantitative change of the discrete total variation we need to seek for $\mu$ such that $\TV(\mathbf{u_h}) - \widetilde\TV_{\mu}(\pi \mathbf{u_h}) = 0$. For that, we suppose that $\mu$ is constant on every element of $\Omega_h$, in particular, we have $\mu_{i_k} =: \mu_i$ for $i\in\mathcal{R}$ and $k=1,2,3,4$. Then
\begin{multline*}
	\TV(\mathbf{u_h}) - \widetilde\TV_{\mu}(\pi \mathbf{u_h}) = \sum_{i\in I\setminus\mathcal{N}} h_i^2 (1-\mu_i) (|\nabla_h \mathbf{u_h}|_{F,r})_i + \sum_{i\in \mathcal{N}\setminus\mathcal{R}} h_i^2 \big( (|\nabla_h \mathbf{u_h}|_{F,r})_i - \mu_i (|\widetilde{\nabla_h} \pi \mathbf{u_h}|_{F,r})_i \big) \\ + \sum_{i\in \mathcal{R}} h_i^2 \left( (|\nabla_h \mathbf{u_h}|_{F,r})_i - \mu_i  \frac{1}{4}\sum_{k=1}^4 (|\widetilde{\nabla_h} \pi \mathbf{u_h}|_{F,r})_{i_k}\right).
\end{multline*}
The above expression vanishes if the following system of equations holds: 
$$ \left\{\begin{array}{ll}
	(1-\mu_i) (|\nabla_h \mathbf{u_h}|_{F,r})_i = 0, & i\in I\setminus\mathcal{N}, \\
	(|\nabla_h \mathbf{u_h}|_{F,r})_i - \mu_i (|\widetilde{\nabla_h} \pi \mathbf{u_h}|_{F,r})_i = 0, & i\in\mathcal{N}\setminus\mathcal{R}, \\
	(|\nabla_h \mathbf{u_h}|_{F,r})_i - \frac{1}{4} \mu_i \sum_{k=1}^4 (|\widetilde{\nabla_h} \pi \mathbf{u_h}|_{F,r})_{i_k} = 0, & i\in\mathcal{R}.
\end{array}\right . $$
These equations are satisfied if (note that $(\widetilde\pi|\widetilde{\nabla_h} \pi \mathbf{u_h}|_{F,r})_i = 0$ if and only if $(|\nabla_h \mathbf{u_h}|_{F,r})_i = 0$)
\begin{equation} \label{eq:lambda}
	 \mu_i := \begin{cases}
		\frac{(|\nabla_h \mathbf{u_h}|_{F,r})_i}{(\widetilde\pi|\widetilde{\nabla_h} \pi \mathbf{u_h}|_{F,r})_i}, & \text{if}\ (\widetilde\pi|\widetilde{\nabla_h} \pi \mathbf{u_h}|_{F,r})_i \neq 0, \\
		1, & \text{otherwise}.
	\end{cases}
\end{equation}
for all $i\in I$. Indeed, if $i\in I\setminus\mathcal{N}$, then $(|\nabla_h \mathbf{u_h}|_{F,r})_i=(\widetilde\pi|\widetilde{\nabla_h} \pi \mathbf{u_h}|_{F,r})_i$ and hence $\mu_i=1$, otherwise, if $i\in\mathcal{N}\setminus\mathcal{R}$, then we have $(\widetilde\pi|\widetilde{\nabla_h} \pi \mathbf{u_h}|_{F,r})_i = (|\widetilde{\nabla_h} \pi \mathbf{u_h}|_{F,r})_i$. Finally, $(\widetilde\pi|\widetilde{\nabla_h} \pi \mathbf{u_h}|_{F,r})_i = \frac{1}{4} \sum_{k=1}^4 (|\widetilde{\nabla_h} \pi \mathbf{u_h}|_{F,r})_{i_k}$ if $i\in\mathcal{R}$.
Therefore, for an adaptive grid, refining elements of the mesh may impact the value of the total variation.
In fact, the continuous total variation of a function $f$ differs in general from the discrete total variation of the discretized $f$.

\begin{example}
	Let us study the particular case where $m=1$. \\
	\textbf{(a)} We suppose that $\widetilde{\Omega_h}$ is a global refinement of the uniform mesh $\Omega_h$. On the uniform mesh, we use the multi-indices $i=(a,b)$ to identify an element of $\Omega_h$ and the multi-index $i_k=(a,b,k)$ with $k=1,2,3,4$, to identify a sub-element of $\widetilde{\Omega_h}$. The forward finite difference scheme writes (we write $u_{a,b}$ instead of $(\mathbf{u_h})_{a,b}$)
	$$ (|\nabla_h \mathbf{u_h}|_r)_{a,b} = \frac{1}{h} \big( |u_{a+1,b} - u_{a,b}|^r + |u_{a,b+1} - u_{a,b}|^r \big)^{1/r}, $$
	and
	$$ \begin{array}{ll}
		(|\widetilde{\nabla_h} \pi \mathbf{u_h}|_r)_{a,b,1} = 0, &
		(|\widetilde{\nabla_h} \pi \mathbf{u_h}|_r)_{a,b,2} = \frac{2}{h} |u_{a+1,b}-u_{a,b}|, \\
		(|\widetilde{\nabla_h} \pi \mathbf{u_h}|_r)_{a,b,3} = \frac{2}{h} |u_{a,b+1}-u_{a,b}|, &
		(|\widetilde{\nabla_h} \pi \mathbf{u_h}|_r)_{a,b,4} = \frac{2}{h} \big( |u_{a+1,b} - u_{a,b}|^r + |u_{a,b+1} - u_{a,b}|^r \big)^{1/r}. \\
	\end{array} $$
	Then
	$$ (\widetilde{\pi}|\widetilde{\nabla_h} \pi \mathbf{u_h}|_r )_{a,b} = \frac{1}{2h} |u_{a+1,b}-u_{a,b}| + \frac{1}{2h} |u_{a,b+1}-u_{a,b}| + \frac{1}{2h} \big( |u_{a+1,b} - u_{a,b}|^r + |u_{a,b+1} - u_{a,b}|^r \big)^{1/r}. $$
	
	If $r=1$ (anisotropic total variation), we have $ \mu_{a,b} = 1 $. Therefore, for a uniform grid, the size of the elements of the mesh have no impact on the value of the anisotropic total variation. If $r=2$ (isotropic total variation), using the above computations, we obtain if $\sqrt{(u_{a+1,b}-u_{a,b})^2} + \sqrt{(u_{a,b+1}-u_{a,b})^2} + \sqrt{(u_{a+1,b} - u_{a,b})^2 + (u_{a,b+1} - u_{a,b})^2} \neq 0$
	$$ 0 < \mu_{a,b} = \frac{ 2\sqrt{(u_{a+1,b} - u_{a,b})^2 + (u_{a,b+1} - u_{a,b})^2}}{ \sqrt{(u_{a+1,b}-u_{a,b})^2} + \sqrt{(u_{a,b+1}-u_{a,b})^2} + \sqrt{(u_{a+1,b} - u_{a,b})^2 + (u_{a,b+1} - u_{a,b})^2}} \leq 1, $$
	and the size of the elements of the mesh indeed have impact on the value of the isotropic total variation, in particular, it increases its value. Hence, the solution of the discrete problem \eqref{eq:primal_h} depends on the mesh-size. \\
	\textbf{(b)} Now we consider any grid $\Omega_h$ and its refinement $\widetilde{\Omega_h}$, not necessarily uniform. If $r=1$, let us consider the grid $\Omega_h$ on Figure \ref{fig:counter-example-weight}, and an associated one-time refined grid $\widetilde{\Omega_h}$
	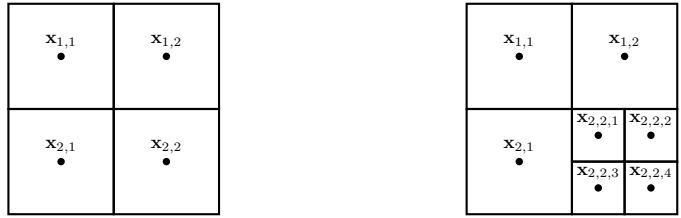
\begin{figure}[h]
		\centering
		\subfloat{
			\begin{tikzpicture}[thick,scale=0.7, every node/.style={scale=0.7}]
				\draw[draw=black] (1,3) rectangle ++(2,2);
				\draw[draw=black] (1,5) rectangle ++(2,2);
				\draw[draw=black] (3,3) rectangle ++(2,2);
				\draw[draw=black] (3,5) rectangle ++(2,2);
				\fill (2,6) circle[radius=2pt] node[yshift=8pt] {$\mathbf{x}_{1,1}$};
				\fill (4,6) circle[radius=2pt] node[yshift=8pt] {$\mathbf{x}_{1,2}$};
				\fill (2,4) circle[radius=2pt] node[yshift=8pt] {$\mathbf{x}_{2,1}$};
				\fill (4,4) circle[radius=2pt] node[yshift=8pt] {$\mathbf{x}_{2,2}$};
			\end{tikzpicture}
		}
		\hspace{3cm}
		\subfloat{
			\begin{tikzpicture}[thick,scale=0.7, every node/.style={scale=0.7}]
				\draw[draw=black] (1,3) rectangle ++(2,2);
				\draw[draw=black] (1,5) rectangle ++(2,2);
				\draw[draw=black] (3,3) rectangle ++(1,1);
				\draw[draw=black] (4,3) rectangle ++(1,1);
				\draw[draw=black] (3,4) rectangle ++(1,1);
				\draw[draw=black] (4,4) rectangle ++(1,1);
				\draw[draw=black] (3,5) rectangle ++(2,2);
				\fill (2,6) circle[radius=2pt] node[yshift=8pt] {$\mathbf{x}_{1,1}$};
				\fill (4,6) circle[radius=2pt] node[yshift=8pt] {$\mathbf{x}_{1,2}$};
				\fill (2,4) circle[radius=2pt] node[yshift=8pt] {$\mathbf{x}_{2,1}$};
				\fill (3.5,4.5) circle[radius=2pt] node[yshift=8pt] {$\mathbf{x}_{2,2,1}$};
				\fill (4.5,4.5) circle[radius=2pt] node[yshift=8pt] {$\mathbf{x}_{2,2,2}$};
				\fill (3.5,3.5) circle[radius=2pt] node[yshift=8pt] {$\mathbf{x}_{2,2,3}$};
				\fill (4.5,3.5) circle[radius=2pt] node[yshift=8pt] {$\mathbf{x}_{2,2,4}$};
			\end{tikzpicture}
		}
		\caption{One refinement example with $\Omega_h$ on the left and a one-refinement $\widetilde{\Omega_h}$ on the right.}
		\label{fig:counter-example-weight}
	\end{figure} 
	Simple computations give $ \mu_{1,1} = 1$, $\mu_{1,2} = \frac{3}{4}$ if $u_{2,2}\neq u_{1,2}$, $\mu_{2,1} = \frac{3}{4}$ if $u_{2,2}\neq u_{2,1}$ and $\mu_{2,2} = 1 $,
	which means that the total variation is changing after refinement. If $r=2$, this result is also true for any (non)-uniform grid since it always contains some uniform parts (if the function is not constant on the uniform part of the grid).
\end{example}

In practice, we look at a fixed discrete data $g$ and refining the grid alters the balance between the total variation and the other terms in the discrete problem. In order to compensate this change of energy, one could change the value of the parameter according to \eqref{eq:lambda}. However, we note that while \eqref{eq:lambda} is compensating the change of the discrete total variation, it leads to a locally weighted total variation, where the weights are chosen considering only very local (one pixel) information. Hence, compensating with \eqref{eq:lambda} in total variation minimization should only be used with caution as a very local criterion might lead to a very dramatic change of the weight from one pixel to the next introducing artifacts in the restoration, as we observed it in numerical experiments. Note that locally adaptive parameter strategies for total variation minimization, as in \cite{Dong2011, Hintermuller2018, Langer2017Feb, Langer2019}, consider overlapping windows of reasonable size to establish a local parameter for total variation minimization. This avoids a dramatic change of the local weights and helps to avoid artifacts.

\section{Primal-Dual Gap Error Estimator} \label{sec:error}

In this section we derive an indicator for the mesh adaptivity, namely the primal-dual gap error estimator.
For the continuous problem we have strong duality \cite[Theorem 4.2, Chapter 3]{Ekeland1999}, namely, if $\mathbf{u}$ is a solution of \eqref{eq:primal} and $(p_1,\mathbf{p_2})$ is a solution of \eqref{eq:dual}, we have 
$ E(\mathbf{u}) = D(p_1,\mathbf{p_2}) $.
However, for non-uniform grids, this last statement might not be true as the discrete gradient $\nabla_h$ is not the adjoint of the discrete divergence $\divop_h$. We have the following primal-dual gap error, whose proof is provided in Appendix \ref{app:pd-gap-proof}: \\

\begin{proposition} \label{prop:pd-gap-error} Let $\mathbf{u}\in H^1(\Omega)^m$ be a solution of \eqref{eq:primal} and $\mathbf{u_h}\in\R^{mN}$ be a solution of \eqref{eq:primal_h}. Then, for all $\mathbf{v_h}\in\R^{mN}$ and $(q_{h,1},\mathbf{q_{h,2}})\in\R^{N}\times\R^{2mN}$, we have
	$$ \frac{1}{2}\|I_h \mathbf{u_h}-\mathbf{u}\|_B^2 \leq \eta_h(\mathbf{v_h}, q_{h,1},\mathbf{q_{h,2}}) + c \| I_h g_h - g \|_{L^2(\Omega)} + R_h(\mathbf{u_h}) +  Q_h(p_{h,1}, \mathbf{p_{h,2}}), $$
	where $c\geq 0$ and
	\begin{align*}
		& \eta_h(\mathbf{v_h}, q_{h,1},\mathbf{q_{h,2}}) := E_h(\mathbf{v_h}) - D_h(q_{h,1},\mathbf{q_{h,2}}), \\
		& R_h(\mathbf{u_h}) := E(I_h \mathbf{u_h}) - E_h(\mathbf{u_h}), \\
		& Q_h(p_{h,1}, \mathbf{p_{h,2}}) := \frac{1}{2}\Big(\|T^* I_h p_{h,1} + \nabla^*I_h \mathbf{p_{h,2}} + \alpha_2 T^* g\|_{B^{-1}}^2 - \| T_h^* q_{h,1} + \nabla_h^*\mathbf{q_{h,2}} - \alpha_2 T_h^* g_h \|_{B_h^{-1}}^2 \Big). \\
	\end{align*}
\end{proposition}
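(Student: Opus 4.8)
The plan is the standard a posteriori argument: combine strong convexity of the continuous energy with continuous strong duality, and then insert the discrete objects, the only non-routine feature being that on non-uniform grids $\nabla_h^* \neq -\divop_h$, which is exactly the mismatch collected in $Q_h$. Throughout, $(p_{h,1},\mathbf{p_{h,2}})$ denotes a solution of \eqref{eq:dual_h} (as the term $Q_h(p_{h,1},\mathbf{p_{h,2}})$ on the right-hand side suggests). First I would record that $E$ is strongly convex with modulus $\|\cdot\|_B$: writing $E = F + G$ with $F(\mathbf{w}) = \tfrac{1}{2}\langle B\mathbf{w},\mathbf{w}\rangle_{L^2(\Omega)^m} - \alpha_2\langle T^* g,\mathbf{w}\rangle_{L^2(\Omega)^m} + \mathrm{const}$ the quadratic part (Hessian $B = \alpha_2 T^*T + \beta S^*S$) and $G$ the convex, lower semicontinuous sum of the $L^1$-fidelity and the total variation, minimality of $\mathbf{u}$ for \eqref{eq:primal} yields some $\xi \in \partial G(\mathbf{u})$ with $F'(\mathbf{u}) + \xi = 0$, hence for every admissible competitor $\mathbf{w}$
\[ E(\mathbf{w}) - E(\mathbf{u}) \;\ge\; \langle F'(\mathbf{u}) + \xi,\, \mathbf{w} - \mathbf{u}\rangle_{L^2(\Omega)^m} + \tfrac{1}{2}\|\mathbf{w} - \mathbf{u}\|_B^2 \;=\; \tfrac{1}{2}\|\mathbf{w} - \mathbf{u}\|_B^2 . \]
Taking $\mathbf{w} = I_h\mathbf{u_h}$, with $E$ and $\|\cdot\|_B$ read in the natural $\BV$ sense on this piecewise-constant function, gives $\tfrac{1}{2}\|I_h\mathbf{u_h} - \mathbf{u}\|_B^2 \le E(I_h\mathbf{u_h}) - E(\mathbf{u})$.

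Next I would use continuous strong duality, $E(\mathbf{u}) = D(p_1,\mathbf{p_2})$, together with the fact that piecewise-constant interpolation preserves the pointwise constraints: from $|(p_{h,1})_i| \le \alpha_1$ and $|(\mathbf{p_{h,2}})_i|_{F,s} \le \lambda$ at the degrees of freedom one gets $|I_h p_{h,1}| \le \alpha_1$ and $|I_h\mathbf{p_{h,2}}|_{F,s} \le \lambda$ almost everywhere, so $(I_h p_{h,1}, I_h\mathbf{p_{h,2}})$ is admissible for \eqref{eq:dual} and therefore $D(p_1,\mathbf{p_2}) \ge D(I_h p_{h,1}, I_h\mathbf{p_{h,2}})$. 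Hence
\[ \tfrac{1}{2}\|I_h\mathbf{u_h} - \mathbf{u}\|_B^2 \;\le\; \bigl(E(I_h\mathbf{u_h}) - E_h(\mathbf{u_h})\bigr) + \bigl(E_h(\mathbf{u_h}) - D_h(p_{h,1},\mathbf{p_{h,2}})\bigr) + \bigl(D_h(p_{h,1},\mathbf{p_{h,2}}) - D(I_h p_{h,1}, I_h\mathbf{p_{h,2}})\bigr) . \]
The first bracket is $R_h(\mathbf{u_h})$ by definition. For the second, optimality of $\mathbf{u_h}$ for \eqref{eq:primal_h} gives $E_h(\mathbf{u_h}) \le E_h(\mathbf{v_h})$ and optimality of $(p_{h,1},\mathbf{p_{h,2}})$ for \eqref{eq:dual_h} gives $D_h(q_{h,1},\mathbf{q_{h,2}}) \le D_h(p_{h,1},\mathbf{p_{h,2}})$, so the second bracket is $\le \eta_h(\mathbf{v_h}, q_{h,1},\mathbf{q_{h,2}})$ (trivially so when $(q_{h,1},\mathbf{q_{h,2}})$ is infeasible).

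For the third bracket I would expand $D_h$ and $D$ term by term. The indicator terms all vanish by feasibility; the two quadratic terms are by definition $Q_h(p_{h,1},\mathbf{p_{h,2}})$ (up to the sign convention in its statement); and the remaining data terms, using $\langle a_h, b_h\rangle_{L^2(\Omega_h)} = \langle I_h a_h, I_h b_h\rangle_{L^2(\Omega)}$, collapse to $\tfrac{\alpha_2}{2}\langle I_h g_h - g,\, I_h g_h + g\rangle_{L^2(\Omega)} + \langle g - I_h g_h,\, I_h p_{h,1}\rangle_{L^2(\Omega)}$, which the Cauchy--Schwarz inequality bounds by $c\,\|I_h g_h - g\|_{L^2(\Omega)}$ with $c := \tfrac{\alpha_2}{2}\|I_h g_h + g\|_{L^2(\Omega)} + \|I_h p_{h,1}\|_{L^2(\Omega)}$. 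This $c$ is bounded independently of the mesh, since $I_h g_h$ is the $L^2$-projection of $g$, so $\|I_h g_h\|_{L^2(\Omega)} \le \|g\|_{L^2(\Omega)}$, and $\|I_h p_{h,1}\|_{L^2(\Omega)} = \|p_{h,1}\|_{L^2(\Omega_h)} \le \alpha_1 |\Omega|^{1/2}$ by feasibility. Adding the three brackets yields the asserted estimate.

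I expect the delicate point to be the first step. One must pin down the lower semicontinuous $\BV$-extension of $E$ for which $E(I_h\mathbf{u_h})$ is meaningful and the $B$-quantity $\|I_h\mathbf{u_h} - \mathbf{u}\|_B$ is well defined — the latter being subtle when $S = \nabla$, since then $B$ involves a differential operator and $I_h\mathbf{u_h} \notin H^1(\Omega)^m$ — and for which the subgradient inequality for $G$ at $\mathbf{u}$ persists against the piecewise-constant competitor $I_h\mathbf{u_h}$. Once that functional setting is fixed, the remaining steps are essentially bookkeeping; the only care needed there is to match the sign and index conventions in $Q_h$ exactly, and to verify that interpolation preserves both families of pointwise dual constraints.
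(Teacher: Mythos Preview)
Your proposal is correct and follows essentially the same route as the paper: invoke the strong-convexity inequality $\tfrac12\|I_h\mathbf{u_h}-\mathbf{u}\|_B^2 \le E(I_h\mathbf{u_h})-E(\mathbf{u})$, replace $E(\mathbf{u})$ by $D(p_1,\mathbf{p_2})$ via strong duality and then by $D(I_h p_{h,1},I_h\mathbf{p_{h,2}})$ via feasibility of the interpolated duals, split into the three brackets $R_h + \eta_h + (D_h-D)$, and estimate the data part of $D_h-D$ by Cauchy--Schwarz together with $\|I_h g_h\|_{L^2}\le\|g\|_{L^2}$ and $|I_h p_{h,1}|\le\alpha_1$. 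Your write-up is in fact a bit more careful than the paper's, explicitly checking feasibility of the interpolated duals and flagging the functional-analytic issue with $I_h\mathbf{u_h}\notin H^1(\Omega)^m$ that the paper leaves implicit.
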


We notice that
\begin{align*}
	E(I_h \mathbf{u_h}) - E_h(\mathbf{u_h}) & = \alpha_1\Big(\int_\Omega |T I_h \mathbf{u_h}-g|\ dx - \sum_{i=1}^N h_i^2 (|T_h\mathbf{u_h}-g_h|)_i\big)\Big) \\ & \hspace{1cm} + \frac{\alpha_2}{2}\Big( \|T I_h \mathbf{u_h} - g\|_{L^2(\Omega)}^2 - \|T_h\mathbf{u_h} - g_h\|_{L^2(\Omega_h)}^2 \Big) + \frac{\beta}{2}\Big( \|S I_h \mathbf{u_h}\|_{V_S}^2  - \|S_h\mathbf{u_h}\|_{V_{S,h}}^2 \Big) \\ & \hspace{1cm} + \lambda\Big( \int_\Omega |\nabla I_h \mathbf{u_h}|_{\text{F},r}\ dx - \sum_{i=1}^N h_i^2 (|\nabla_h \mathbf{u_h}|_{\text{F},r})_i \Big).
\end{align*}
In general, $T I_h u_h$ and $S I_h u_h$ are not piecewise constant despite $I_h u_h$ is. That is why, for example, the second term becomes
$$ \|T I_h \mathbf{u_h} - g\|_{L^2(\Omega)}^2 - \|T_h\mathbf{u_h} - g_h\|_{L^2(\Omega_h)}^2 = \sum_{i=1}^N \int_{E_i} (T I_h \mathbf{u_h} - g)^2 - \big( (T I_h \mathbf{u_h})(\mathbf{x}_i) - I_h g_h(x) \big)^2\ dx, $$
where $E_i$ is the $i$-th element of the mesh, $h_i$ is its length and $\mathbf{x}_i$ is its center. A similar behavior occurs for the first and third terms. In fact, these terms represent the discretization error of the operators $T$ and $S$ and of the data $g$. The term $ \int_\Omega |\nabla I_h \mathbf{u_h}|_{\text{F},r}\ dx - \sum_{i=1}^N h_i^2 (|\nabla_h \mathbf{u_h}|_{\text{F},r})_i $ corresponds to the discretization error of the total variation. In the piecewise constant setting, it has been proved that this term does not go to $0$ when $h$ tends to $0$ \cite{Bartels2012}. It is the reason why we do not expect the term $E(I_h \mathbf{u_h}) - E_h(\mathbf{u_h})$ to go to $0$ when $h$ goes to $0$ and why we do not include this term in our mesh refinement indicator.

The term $Q_h$ is the discretization error of the operators and of the data term $g$. In general, we do not have $(B^{-1})_h = (B_h)^{-1}$ and $(B^{-1} T^*)_h = (B^{-1})_h (T^*)_h$. Moreover, we may have to assume the convergence of the discrete operators to the continuous one in suitable senses. That is why the study of $Q_h$ is difficult and may be the topic of further research. Consequently, we chose not to use $D(I_h p_{h,1},I_h \mathbf{p_{h,2}}) - D_h(p_{h,1},\mathbf{p_{h,2}})$ in our refinement indicator.

As a consequence of the previous proposition, we use a local version $(\eta_h)_i$ of $\eta_h$, i.e.\ such that $\eta_h = \sum_{i=1}^N (\eta_h)_i$, as indicator for the grid adaptivity defined as: for all $\mathbf{v_h}\in\R^{mN}$ and $(q_{h,1},\mathbf{q_{h,2}})\in\R^{N}\times\R^{2mN}$ such that $|q_{h,1}|\leq \alpha_1$ and $|\mathbf{q_{h,2}}|_{F,s}\leq \lambda$
\begin{multline} \label{eq:grid-ref-indicator}
	h_i^{-2}(\eta_h)_i = \alpha_1 |(T_h\mathbf{v_h}-g_h)_i| + \frac{\alpha_2}{2} (T_h\mathbf{v_h} - g_h)_i^2 + \frac{\beta}{2} \Phi_i(\mathbf{v_h}) + \lambda (|\nabla_h \mathbf{v_h}|_{\text{F},r})_i \\ + \frac{1}{2} \big(T_h^* q_{h,1} + \nabla_h^*\mathbf{q_{h,2}} - \alpha_2 T_h^* g_h\big)_i \big(B_h^{-1}(T_h^* q_{h,1} + \nabla_h^*\mathbf{q_{h,2}} - \alpha_2 T_h^* g_h) \big)_i - \frac{\alpha_2}{2} (g_h)_i^2 + (g_h)_i (q_{h,1})_i,
\end{multline}
where
\begin{equation} \label{eq:grid-ref-indicator:phi}
	\Phi_i(\mathbf{v_h}) := \begin{cases}
		\sum_{k=1}^m (\mathbf{v_h})_{(k-1)N+i}^2 & \text{if}\ S_h=Id,\\
		\sum_{k=1}^m \sum_{k=1}^2 (\nabla_h\mathbf{v_h})_{2(n-1)N + (k-1)N + i}^2 & \text{if}\ S_h=\nabla_h,
	\end{cases}
\end{equation}
for $i=1,\hdots,N$.

\section{Semi-Smooth Newton Method}
\label{sec:algorithms}

In this section we derive a semi-smooth Newton iteration in order to solve the $L^1$-$L^2$-$\TV$ problem. While the dual problem \eqref{eq:dual} is concave, its solution may not be unique due to the non-trivial kernel of $\nabla^*$. To ensure a unique solution, we introduce regularization terms for $p_1$ and $\mathbf{p_2}$ with corresponding parameters $\gamma_1,\gamma_2\geq 0$. By setting $\gamma_1,\gamma_2 > 0$, we guarantee the strong concavity of the dual energy \cite{Hilb2023Jul}. The modified dual problem then becomes
\begin{equation} \label{eq:dual-reg}
	\sup_{\mathbf{p}=(p_1,\mathbf{p_2})\in L^2(\Omega)\times \big(L^2(\Omega)^d\big)^m} D(p_1,\mathbf{p_2}) - \frac{\gamma_1}{2\alpha_1}\|p_1\|_{L^2(\Omega)}^2 - \frac{\gamma_2}{2\lambda}\|\mathbf{p_2}\|_{\big(L^2(\Omega)^d\big)^m}^2.
\end{equation}
As in \cite{Hilb2023Jul}, we use the convention that the terms $\frac{\gamma_1}{2\alpha_1}\|p_1\|_{L^2(\Omega)}^2$ and $\frac{\gamma_2}{2\lambda}\|\mathbf{p_2}\|_{\big(L^2(\Omega)^d\big)^m}^2$ vanish respectively for $\alpha_1=0$ or $\lambda=0$, as these immediately implie $p_1=0$ or $\mathbf{p_2}=0$ respectively. By \cite[Theorem 3.1 and Proposition 3.2]{Hilb2023Jul} the dual problem to \eqref{eq:dual-reg} is given by
\begin{equation} \label{eq:primal-reg}
	\inf_{\mathbf{u}\in H^1(\Omega)^m} \alpha_1\int_\Omega \varphi_{\gamma_1}\big(|T\mathbf{u}-g|\big)\ dx + \frac{\alpha_2}{2} \|T\mathbf{u} - g\|_{L^2(\Omega)}^2 + \frac{\beta}{2} \|S\mathbf{u}\|_{V_S}^2 + \lambda\int_\Omega \varphi_{\gamma_2}\big(|\nabla\mathbf{u}|_{\text{F},r}\big)\ dx,
\end{equation}
where $\varphi_\gamma:\R\to [0,+\infty[$ is the Hubert function defined for $\gamma\geq 0$ as
$$ \varphi_\gamma(x) := \begin{cases}
	\frac{1}{2\gamma} x^2, & \text{if}\ |x|\leq \gamma, \\
	|x|-\frac{\gamma}{2}, & \text{otherwise}.
\end{cases} $$
The optimality conditions for a solution $\mathbf{u}$ of \eqref{eq:primal-reg} and a solution $(p_1,\mathbf{p_2})$ of \eqref{eq:dual-reg} read
\begin{equation} \label{eq:optimality-conditions}
	\left\{\begin{array}{ll}
		0 = T^* p_1 + \nabla^* \mathbf{p_2} - \alpha_2 T^* g + B\mathbf{u}, &  \\
		0 = p_1 \max\{ \gamma_1, |T\mathbf{u}-g| \} - \alpha_1 (T\mathbf{u}-g), & \text{if}\ |p_1|\leq\alpha_1, \\
		0 = \mathbf{p_2} \max\{ \gamma_2, |\nabla \mathbf{u}|_{F,r} \} - \lambda \nabla \mathbf{u}, & \text{if}\ |\mathbf{p_2}|_{F,r}\leq\lambda.
	\end{array}\right.
\end{equation}
We restrict ourselves to $r=2$ until the end of the article so that the $\max$-operator and the $r$-Frobenius norm are generalized differentiable and semi-smooth, justifying the application of a Newton step \cite{Hintermuller2006}. When $|p_1|\leq\alpha_1$ and $|\mathbf{p_2}|_{F,r}\leq\lambda$, we have by using the optimality conditions \eqref{eq:optimality-conditions}
\begin{equation} \label{eq:opti-equ}
	F(\mathbf{u},p_1,\mathbf{p_2}) := T^* p_1 + \nabla^* \mathbf{p_2} - \alpha_2 T^* g + B\mathbf{u} + m_1(\mathbf{u})\,p_1 - \alpha_1 (T\mathbf{u}-g) + m_2(\mathbf{u})\,\mathbf{p_2} - \lambda \nabla \mathbf{u} = 0,
\end{equation}
where $m_1(\mathbf{u}) := \max\{ \gamma_1, |T\mathbf{u}-g| \}$ and $m_2(\mathbf{u}) := \max\{ \gamma_2, |\nabla \mathbf{u}|_{F,r} \}$.
The resulting Newton system $(0, 0, 0) = DF(\mathbf{u}, p_1, \mathbf{p_2})(\mathbf{d_u}, d_{p_1}, \mathbf{d_{p_2}})$ reads as follows:
\begin{equation*} \label{eq:newton-system}
    \left\{\begin{array}{ccccl}
       B\mathbf{d_u} & + T^* d_{p_1} & + \nabla^*\mathbf{d_{p_2}} & = & - \Big( \nabla^*\mathbf{p_2} + T^*p_1 + B\mathbf{u} - \alpha_2 T^* g \Big), \\
       \chi_1 \frac{(T\mathbf{u}-g)\cdot T\mathbf{d_u}}{|T\mathbf{u}-g|}p_1-\alpha_1 T\mathbf{d_u} & + m_1 d_{p_1} & & = & -\Big( m_1p_1-\alpha_1(T\mathbf{u}-g) \Big), \\
       \chi_2\frac{\nabla\mathbf{u}\cdot\nabla\mathbf{d_u}}{|\nabla\mathbf{u}|_{F,r}}\mathbf{p_2} - \lambda\nabla\mathbf{d_u} & & + m_2\mathbf{d_{p_2}} & = & - \Big( m_2\mathbf{p_2}-\lambda\nabla\mathbf{u} \Big),
    \end{array}\right.
\end{equation*}
where $\mathbf{u}\in H^1(\Omega)^m$, $p_1\in L^2(\Omega)$, $\mathbf{p_2}\in \big(L^2(\Omega)^2\big)^d$ represent the variables from the previous Newton step and $(\mathbf{d_u}, d_{p_1}, \mathbf{d_{p_2}}) \in H^1(\Omega)^m\times L^2(\Omega)\times \big(L^2(\Omega)^2\big)^d$ is the solution of the Newton system. Using the previous equations and by using the definitions of $m_1$ and $m_2$ we have
\begin{equation} \label{eq:dp1}
    d_{p_1} = -p_1  + \frac{\alpha_1}{m_1(\mathbf{u})}\big(T(\mathbf{u}+\mathbf{d_u})-g\big) - \chi_1 \frac{(T\mathbf{u}-g)\cdot T\mathbf{d_u}}{m_1(\mathbf{u})^2}p_1,
\end{equation}
\begin{equation} \label{eq:dp2}
    \mathbf{d_{p_2}} = - \mathbf{p_2} + \frac{\lambda}{m_2(\mathbf{u})} \nabla(\mathbf{u}+\mathbf{d_u}) - \chi_2\frac{\nabla\mathbf{u}\cdot\nabla\mathbf{d_u}}{m_2(\mathbf{u})^2}\mathbf{p_2},
\end{equation}
where 
$$ \chi_1(\mathbf{u}) := \begin{cases}
	1 & \text{if}\ |T\mathbf{u}-g|\geq\gamma_1, \\
	0 & \text{otherwise},
\end{cases} \quad\text{and}\quad \chi_2(\mathbf{u}) := \begin{cases}
	1 & \text{if}\ |\nabla \mathbf{u}|_{F,r}\geq\gamma_2, \\
	0 & \text{otherwise}.
\end{cases}$$
Replacing these two equations into the Newton system gives us
\begin{equation}\label{eq:du}
H \mathbf{d_u} = G,
\end{equation}
where
$$ H := T^* \left( \frac{\alpha_1}{m_1(\mathbf{u})}T - \chi_1 p_1 \frac{(T\mathbf{u}-g)\cdot T}{m_1(\mathbf{u})^2} \right) + \nabla^* \left( \frac{\lambda}{m_2(\mathbf{u})} \nabla - \chi_2 \mathbf{p_2} \frac{\nabla \mathbf{u}\cdot\nabla}{m_2(\mathbf{u})^2} \right) + B, $$
and
$$ G := -\nabla^* \frac{\lambda}{m_2(\mathbf{u})}\nabla \mathbf{u} - T^* \frac{\alpha_1}{m_1(\mathbf{u})} (T\mathbf{u}-g) - B\mathbf{u} + \alpha_2 T^* g. $$

Now we discretize \eqref{eq:dp1}, \eqref{eq:dp2} and \eqref{eq:du} using the adaptive finite difference scheme introduced in Section \ref{sec:afdm}. 
Let $m_{h,1}, m_{h,2} \in \R^N$ represent the approximations of $m_1$ and $m_2$ respectively, and let $\chi_{h,1}, \chi_{h,2} \in \R^N$ be the corresponding approximations of $\chi_1$ and $\chi_2$ respectively, as described in Section \ref{sec:discretization}. 
Then the discrete version of \eqref{eq:du} is
\begin{equation} \label{eq:newton-du}
    H_h \mathbf{d_{u_h}} = G_h,
\end{equation}
where
\begin{multline*}
    H_h := B_h + T_h^*D_N(m_{h,1})^{-1}\Big( D_N(\alpha_1 \mathbf{1}_N) - D_N(\chi_{h,1})D_N(m_{h,1})^{-1}D_N(p_{h,1})D_N(T_h \mathbf{u_h}-g_h)\Big) T_h \\ + \nabla_h^*D_{2N}(m_{h,2})^{-1}\Big( D_{2N}(\lambda \mathbf{1}_{2N}) - D_{2N}(\chi_{h,2}) D_{2N}(m_{h,2})^{-1} D_{2N}(\mathbf{p_{h,2}}) \mathcal{N}(\nabla_h \mathbf{u_h}) \Big)\nabla_h,
\end{multline*}
$$ G_h := -\nabla_h^* D_{2N}(\lambda \mathbf{1}_{2N}) D_{2N}(m_{h,2})^{-1} \nabla_h\mathbf{u_h} - T_h^* D_N(\alpha_1 \mathbf{1}_N) D_N(m_{h,1})^{-1} (T_h \mathbf{u_h} - g_h) - B_h\mathbf{u_h} + D_N(\alpha_2 \mathbf{1}_N) T_h^* g_h, $$
$D_M(v_h)$ denotes the diagonal matrix with the values of $v_h\in\R^M$, $M\geq 1$ on the diagonal and $\mathbf{1}_M := (1,\hdots,1)^T\in\R^M$. Here $\mathcal{N}(\nabla_h \mathbf{u_h})\nabla_h\mathbf{d_{u_h}}$ is the discretization of the term $\nabla\mathbf{u}\cdot\nabla\mathbf{d_u}$ in $\eqref{eq:du}$. Moreover, we have
\begin{multline} \label{eq:newton-dp1}
    d_{p_{h,1}} = - p_{h,1} + D_N(m_{h,1})^{-1} \Big( D_N(\alpha_{1}\mathbf{1}_N)\big(T_h (\mathbf{u_h} + \mathbf{d_{u_h}})-g_h\big) \\ - D_N(\chi_{h,1})D_N(m_{h,1})^{-1}D_N(p_{h,1})D_N(T_h \mathbf{u_h}-g_h) T_h \mathbf{d_{u_h}} \Big),
\end{multline}
\begin{multline}  \label{eq:newton-dp2}
    \mathbf{d_{p_{h,2}}} = - \mathbf{p_{h,2}} + D_{2N}(m_{h,2})^{-1} \Big( D_{2N}(\lambda \mathbf{1}_{2N})\nabla_h (\mathbf{u_h}+\mathbf{d_{u_h}}) \\ - D_{2N}(\chi_{h,2}) D_{2N}(m_{h,2})^{-1} D_{2N}(\mathbf{p_{h,2}}) \mathcal{N}(\nabla_h \mathbf{u_h})\nabla_h \mathbf{d_{u_h}} \Big).
\end{multline}
We note that $H_h$ is even in the uniform case not symmetric because
$ \mathcal{N}(\nabla_h \mathbf{u_h}) $
is not. Indeed, if $m=1$ (image reconstruction) for $v=(v_x, v_y)\in\R^{2N}$, we have
$$ \mathcal{N}(v) := \begin{pmatrix} D_N(v_x) & D_N(v_y) \\ D_N(v_x) & D_N(v_y) \end{pmatrix}, $$
and if $m=2$ (optical flow estimation), for $v=(v^1_x, v^1_y, v^2_x, v^2_y)\in\R^{4N}$, it follows
$$ \mathcal{N}(v) := \begin{pmatrix} D_N(v^1_x) & D_N(v^1_y) & D_N(v^2_x) & D_N(v^2_y) \\ D_N(v^1_x) & D_N(v^1_y) & D_N(v^2_x) & D_N(v^2_y) \\ D_N(v^1_x) & D_N(v^1_y) & D_N(v^2_x) & D_N(v^2_y) \\ D_N(v^1_x) & D_N(v^1_y) & D_N(v^2_x) & D_N(v^2_y) \end{pmatrix}. $$
In order to satisfy $|p_{h,1}|\leq\alpha_{1}$ and $|\mathbf{p_{h,2}}|_{F,s}\leq\lambda$, we propose to project $(p_{h,1})_i$ (resp. $(\mathbf{p_{h,2}})_i$) on the disk of radius $\alpha_{1}$ (resp. $\lambda$) for the corresponding norms. We give in Algorithm \ref{algo:semismooth-newton} the semi-smooth Newton algorithm.

\IncMargin{1.5em}
\begin{algorithm}[h]
    \SetAlgoLined
    \KwData{parameters $\alpha_{1}, \alpha_{2}, \lambda, \beta, \gamma_{1}, \gamma_{2} \geq 0$, data $g_h\in \R^N$, initial values $(\mathbf{u}_\mathbf{h}^0, p_{h,1}^0, \mathbf{p}_{\mathbf{h,2}}^0) \in \R^{mN}\times\R^N\times\R^{2mN}$.}
    \KwResult{sequence $(\mathbf{u}_\mathbf{h}^n, p_{h,1}^n, \mathbf{p}_\mathbf{h,2}^n)_{n\in\N}$ approximating the solution to \eqref{eq:optimality-conditions}.}
    $n = 0$\;
    \While{the stopping criterion is not fulfilled}{
        Solve \eqref{eq:newton-du} and save the result in $\mathbf{d_{u_h}}$\;
        Compute $d_{p_{h,1}}$ and $\mathbf{d_{p_{h,2}}}$ using \eqref{eq:newton-dp1} and \eqref{eq:newton-dp2} respectively\;
        $\mathbf{u}_\mathbf{h}^{n+1} = \mathbf{u}_\mathbf{h}^n + \mathbf{d_{u_h}}$\;
        $\mathbf{p}_\mathbf{h}^{n+1} = \mathbf{p}_\mathbf{h}^n + (d_{p_{h,1}},  \mathbf{d_{p_{h,2}}})$\;
        Project $\mathbf{p}_\mathbf{h}^{n+1}$ to satisfy $|p_{h,1}^{n+1}|\leq\alpha_{1}$ and $|\mathbf{p}_\mathbf{h,2}^{n+1}|_{F,s}\leq\lambda$\;
        $n=n+1$\;
    }
    
    \caption{Discrete semi-smooth Newton algorithm.}
    \label{algo:semismooth-newton}
\end{algorithm}
\DecMargin{1.5em}
\color{black}
We propose to use as stopping criterion in Algorithm \ref{algo:semismooth-newton} the residual, i.e.\ $ \| F_h(\mathbf{u}_\mathbf{h}^n, p_{h,1}^n, \mathbf{p}_\mathbf{h,2}^n) \|_{L^2(\Omega_h)} \leq \varepsilon$, for some given tolerance $\varepsilon > 0$, where $F_h$ is the discretization of $F$ defined in \eqref{eq:opti-equ}. Additionally, we stop the algorithm when the maximal number of iteration $\text{max\_it}\in\N$ is reached. 
\begin{note}
	In order to guaranty the convergence of the algorithm to a minimizer of \eqref{eq:opti-equ}, we need $H_h$ to be positive definite. Unfortunately, this is not the case in general. In order to overcome this, one could replace $H_h$ by a positive definite matrix $H_h^+$ by projecting $H_h$ into the space of positive definite matrices $S_+^{2mN}$, i.e. by solving
	$$ H_h^+ := \argmin_{A \in S_+^{2mN}} |A - H_h|_{F,2}. $$
	The solution of this projection has the following form \cite[Section 8.1.1]{Boyd2004}
	\begin{equation} \label{eq:matrix-projection}
		H_h^+ = \sum_{i=1}^{2mN} \max(0,\mu_i)\xi_i\xi_i^T,
	\end{equation}
	where $(\mu_i)_i$ are the eigenvalues of the corresponding eigenvectors $(\xi_i)_i$ of $H$. In practice, due to the large size of the matrix $H$, we use sparse matrices. To compute the projection in equation \eqref{eq:matrix-projection}, we need to determine all $2mN$ eigenvalues. The only available implementation for handling sparse matrices is the \textit{implicitly restarted Arnoldi method} \cite{Lehoucq1998}, which allows us to numerically compute at most $2mN-1$ eigenvalues/eigenvectors. Consequently, we cannot directly use equation \eqref{eq:matrix-projection}, as it requires all eigenvalues and eigenvectors. This is the reason why we use the original matrix $H_h$ instead of $H_h^+$.
\end{note}

\section{Numerical Experiments}
\label{sec:experiments}

In this section, we solve the regularized primal problem \eqref{eq:primal-reg} and its associated pre-dual problem \eqref{eq:dual-reg} using the semi-smooth Newton algorithm in Algorithm \ref{algo:semismooth-newton}. The energies employed here differ from those used to derive the grid refinement indicator \eqref{eq:grid-ref-indicator}. 
Nevertheless, a similar result as Proposition \ref{prop:pd-gap-error} can be shown changing the indicator  \eqref{eq:grid-ref-indicator} in the following way: for all $\mathbf{v_h}\in\R^{mN}$ and $(q_{h,1},\mathbf{q_{h,2}})\in\R^{N}\times\R^{2mN}$ such that $|q_{h,1}|\leq \alpha_1$ and $|\mathbf{q_{h,2}}|_{F,s}\leq \lambda$
\begin{multline} \label{eq:grid-ref-indicator-reg}
	h_i^{-2}(\eta_h)_i = \alpha_1 \varphi_{\gamma_{h,1}}\big((T_h\mathbf{v_h}-g_h)_i\big) + \frac{\alpha_2}{2} (T_h\mathbf{v_h} - g_h)_i^2 + \frac{\beta}{2} \Phi_i(\mathbf{v_h}) + \lambda (|\nabla_h \mathbf{v_h}|_{\text{F},r})_i \\ + \frac{1}{2} \sum_{k=1}^m \big(T_h^* q_{h,1} + \nabla_h^*\mathbf{q_{h,2}} - \alpha_2 T_h^* g_h\big)_{(k-1)N+i} \big(B_h^{-1}(T_h^* q_{h,1} + \nabla_h^*\mathbf{q_{h,2}} - \alpha_2 T_h^* g_h) \big)_{(k-1)N+i} \\ - \frac{\alpha_2}{2} (g_h)_i^2 + (g_h)_i (q_{h,1})_i + \frac{\gamma_1}{2\alpha_1}(q_{h,1})_i^2 + \frac{\gamma_2}{2\lambda}\sum_{n=1}^m\sum_{k=1}^2 (\mathbf{q_{h,2}})_{2(n-1)N + (k-1)N + i}^2,
\end{multline}
where $\Phi_i$ is defined in \eqref{eq:grid-ref-indicator:phi}, for $i=1,\hdots,N$. Observe that, because we discard many terms in Proposition \ref{prop:pd-gap-error}, it is possible that certain components of the indicator could turn out to be negative. More precisely, the fifth and seventh term in \eqref{eq:grid-ref-indicator-reg} may take on negative values, whereas the sixth term is always negative. To address this, we ensure the indicator is always positive by replacing $\eta_h$ in \eqref{eq:dorfler} by
\begin{equation} \label{eq:grid-ref-indicator-reg-shifted}
	\widetilde{\eta_h} := \eta_h - \min_n (\eta_h)_n.
\end{equation}
The initial values for the Newton algorithm are chosen to be $\mathbf{u}_{\mathbf{h}}^0 = T_h^* g_h$, $p_{h,1}^0 = T_h \mathbf{u}_{\mathbf{h}}^0$ and $\mathbf{p}_{\mathbf{h,2}}^0 = \nabla_h \mathbf{u}_{\mathbf{h}}^0$. Experimentally, this choice leads to a faster convergence, in terms of number of iterations, of the algorithm than setting $\mathbf{u}_{\mathbf{h}}^0$, $p_{h,1}^0$ and $\mathbf{p}_{\mathbf{h,2}}^0$ identically to zero. In practice, we set $B_h := \alpha_{2} T_h^*T_h + \beta S_h^* S_h$ and $B_h^{-1}$ is taken as the inverse of $B_h$. These choices are different from Section \ref{sec:discretization} as it is not clear that $\alpha_{2} T_h^*T_h + \beta S_h^* S_h$ is the discretization of the continuous $B$ from Section \ref{sec:formulation} and that $(\alpha_{2} T_h^*T_h + \beta S_h^* S_h)^{-1}$ is the discretization of the inverse of the continuous $B$. In fact, the invertibility of $\alpha_{2} T_h^*T_h + \beta S_h^* S_h$ remains uncertain as in the case $S_h = \nabla_h$ we do not have that $(\nabla^*)_h = (\nabla_h)^*$. Furthermore, the semi-smooth Newton algorithm occasionally reaches the maximum allowed number of iterations, $\text{max\_it}=100$, which can result in the residual $ \| F_h(\mathbf{u}_\mathbf{h}^n, p_{h,1}^n, \mathbf{p}_\mathbf{h,2}^n) \|_{L^2(\Omega_h)} $ remaining large. The implementation of the algorithms is done in the programming language Python and can be found at \cite{jacumin_thomasjacuminadaptive-l1-l2-tv_2024}.

\subsection{Application to Image Reconstruction}

A grayscale image is represented by a function $f:\Omega\subseteq\R^2\to[0,1]$.
Here $m=1$ and we choose
$ T = Id$ and $g = f,$
which leads to
$ T_h = Id \ \text{and}\ g_h = f_h,$
where $f_h$ is the discretization of $f$. For the grid refinement strategy, we utilize the greedy \textit{Dörfler} marking strategy \cite{Nochetto2009} with $0 \leq \theta_\text{mark} \leq 1$, i.e.\ denoting by $N\in\N$ the number of elements of a given grid $\Omega_h$, for given error indicators in descending order $(\widetilde{\eta_h})_n$, the algorithm refines the first $n_\text{mark}\in\N$ elements \color{black} that satisfy
\begin{equation} \label{eq:dorfler}
	\sum_{n=1}^{n_\text{mark}} (\widetilde{\eta_h})_n \geq \theta_\text{mark} \sum_{n=1}^N (\widetilde{\eta_h})_n.
\end{equation}
We give in Algorithm \ref{algo:noiseless} the coarse-to-fine algorithm to solve the TV problem on an adaptive mesh.

\IncMargin{1.5em}
\begin{algorithm}[h]
	\SetAlgoLined
	\KwData{input image $f_h$ of size $w\times h$, parameters ($\alpha_1$, $\alpha_2$, $\lambda$, $\beta$, $\gamma_1$, $\gamma_2$), maximal number of refinement $N_\text{ref\_max}\in\N$.}
	\KwResult{output image $\mathbf{u_h}$.}
	
	$\Omega_h^1$ uniform coarse grid of size $\lfloor\frac{w}{2^{N_\text{ref\_max}}}\rfloor \times \lfloor\frac{h}{2^{N_\text{ref\_max}}}\rfloor$\;
	$k = 1$\;
	\While{$\text{mesh level} < N_\text{max\_ref}$}{
		compute $\mathbf{u_h}$ using the semi-smooth Newton algorithm (Algorithm \ref{algo:semismooth-newton})\;
		compute the local primal-dual gap error $\widetilde{\eta_h}$ using \eqref{eq:grid-ref-indicator-reg-shifted}\;
		adapt (as described in Section \ref{sec:discretization}) the grid using \eqref{eq:dorfler} and save the resulting grid in $\Omega_h^{k+1}$\;
		$L^2$-projection of the data on the new grid $\Omega_h^{k+1}$\;
		$k = k+1$\;
	}
	
	\caption{Adaptive $L^1$-$L^2$-$\TV$ algorithm for images ($m=1$).}
	\label{algo:noiseless}
\end{algorithm}
\DecMargin{1.5em}

\subsubsection{Noiseless Case}
In order to demonstrate that using the indicator $\tilde \eta_h$ derived from the primal-dual gap error \eqref{eq:grid-ref-indicator-reg-shifted} as a mesh refinement criterion may lead to a faster convergence with respect to the number of degrees of freedom (dofs) than uniform refinement we consider a simple 2D example.
Let the image $f$ be the characteristic function of the disk centered at the origin and of radius $R>0$, i.e.\ $ f = \mathbf{1}_{B(0,R)}$. We recall the explicit solution $u_{\alpha_1,\alpha_2}$ of the problem in \eqref{eq:primal} when $\Omega=\R^2$, $\beta = \gamma_1 = \gamma_2 = 0$ and $\alpha_1 > 0$, $\alpha_2 > 0$ found in \cite{Hintermuller2013}:
	$$ u_{\alpha_1,\alpha_2} = \begin{cases}
		0, & \text{if}\ 0\leq R < \frac{2}{\alpha_2 + \alpha_1}, \\
		\left(\frac{\alpha_2+\alpha_1}{\alpha_2} - \frac{2}{\alpha_2 R}\right)\mathbf{1}_{B(0,R)}, & \text{if}\ \frac{2}{\alpha_2 + \alpha_1}\leq R \leq \frac{2}{\alpha_1}, \\
		\mathbf{1}_{B(0,R)}, & \text{if}\ R > \frac{2}{\alpha_1}.
	\end{cases} $$
In particular, if we choose $\alpha_1 = \alpha_2 = 1$ and $R=\frac{3}{2}$, we have $ u_\text{exact} := \frac{2}{3}\mathbf{1}_{B(0,R)} $.
\begin{figure}[htb]
	\centering
	\includegraphics[width=6cm]{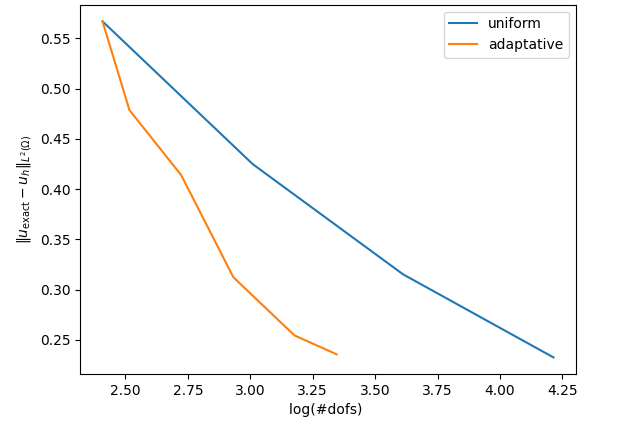}	
	\caption{Convergence for the disk example.}
	\label{fig:convergence-plot}
\end{figure}
Figure \ref{fig:convergence-plot} shows the error between the exact solution $u_{\text{exact}}$ and the computed discrete solution $\mathbf{u_h}$, i.e.\ $\|u_\text{exact} - I_h \mathbf{u_h}\|_{L^2(\Omega)}$, with respect to the number of degrees of freedom when using Algorithm \ref{algo:noiseless} with a uniform mesh and an adaptive mesh on the disk. As we use the semi-smooth Newton algorithm from Section \ref{sec:algorithms}, we must set $\gamma_{1},\gamma_{2}$ non-zero and choose $\gamma_1=\gamma_2=2\times 10^{-4}$, $\varepsilon=1\times 10^{-3}$ and $\theta_\text{mark}=0.2$. As expected, for a given number of dofs, the adaptive algorithm gives a more accurate solution, i.e.\ the error is smaller, than with a uniform mesh.

In Figure \ref{fig:meshes-convergence}, we show the sequence of adapted meshes corresponding to the disk example. We see that the indicator $\tilde \eta_h$ is well suited for mesh adaptivity since it marks elements for refinement which are located near the discontinuities (edges) of the original image.

\def\convwidth{2.5cm}
\begin{figure}[h]
	\centering
	\subfloat[dofs: 256]{
		\includegraphics[width=\convwidth]{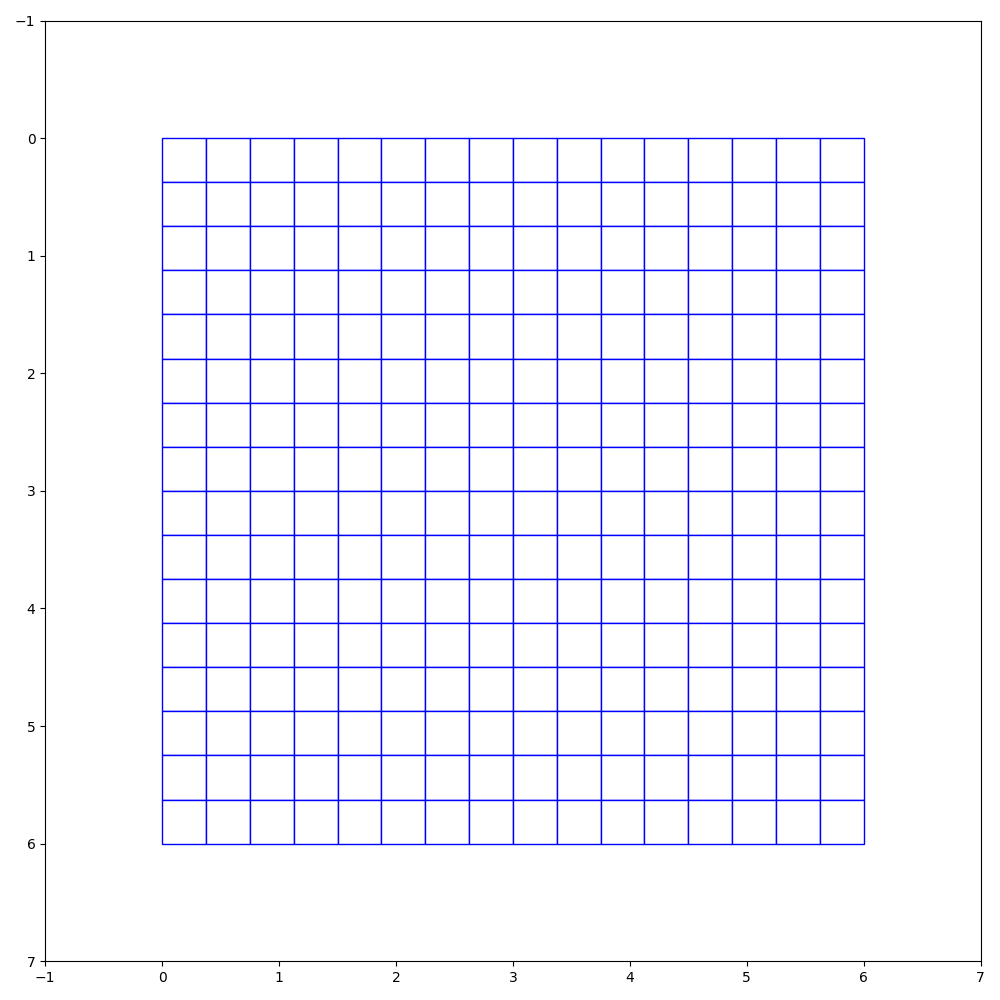}
	}
	\subfloat[dofs: 328]{
		\includegraphics[width=\convwidth]{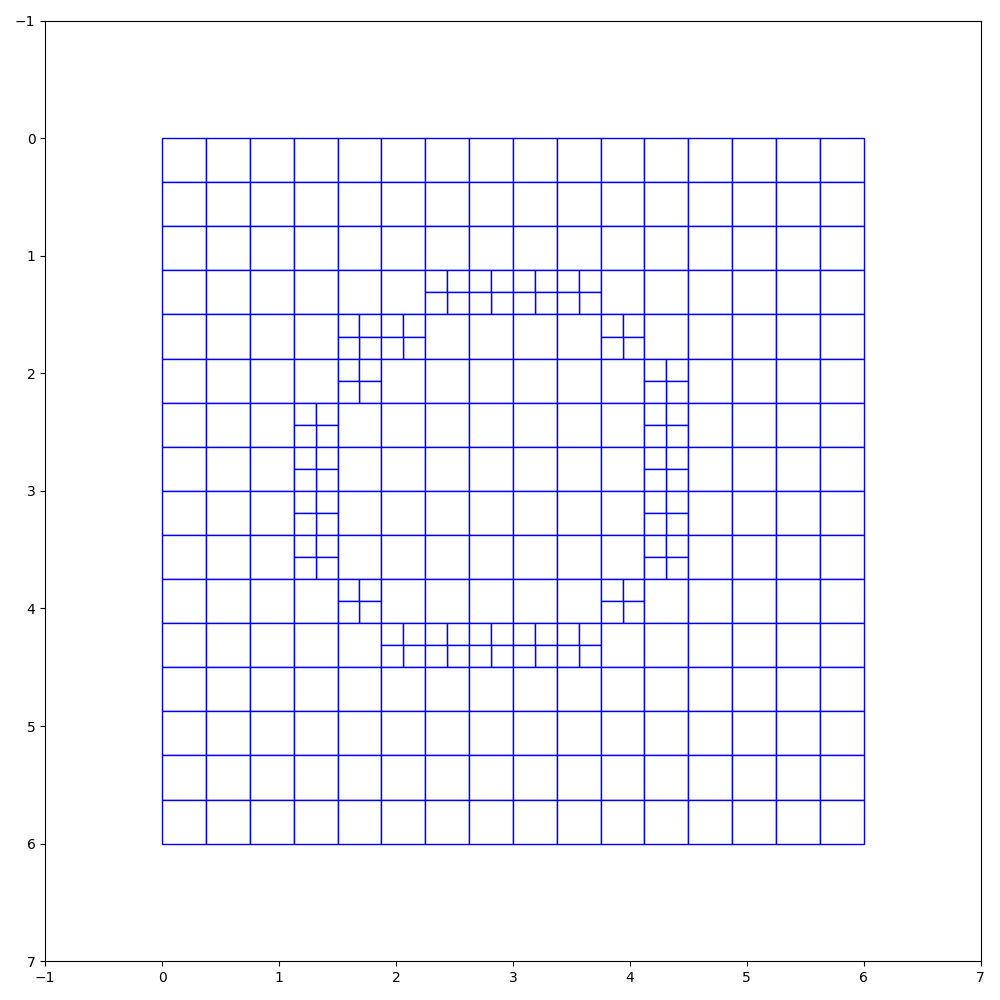}
	}
	\subfloat[dofs: 529]{
		\includegraphics[width=\convwidth]{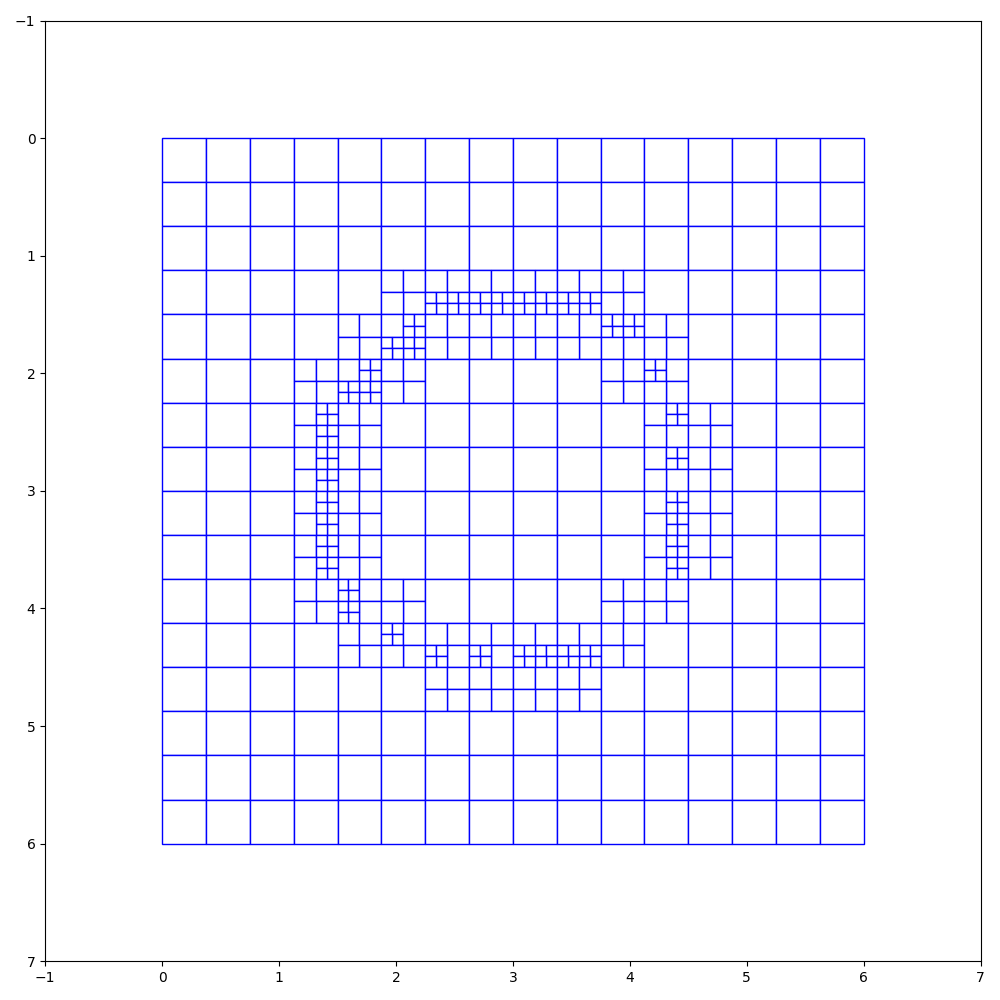}
	}
	\subfloat[dofs: 856]{
		\includegraphics[width=\convwidth]{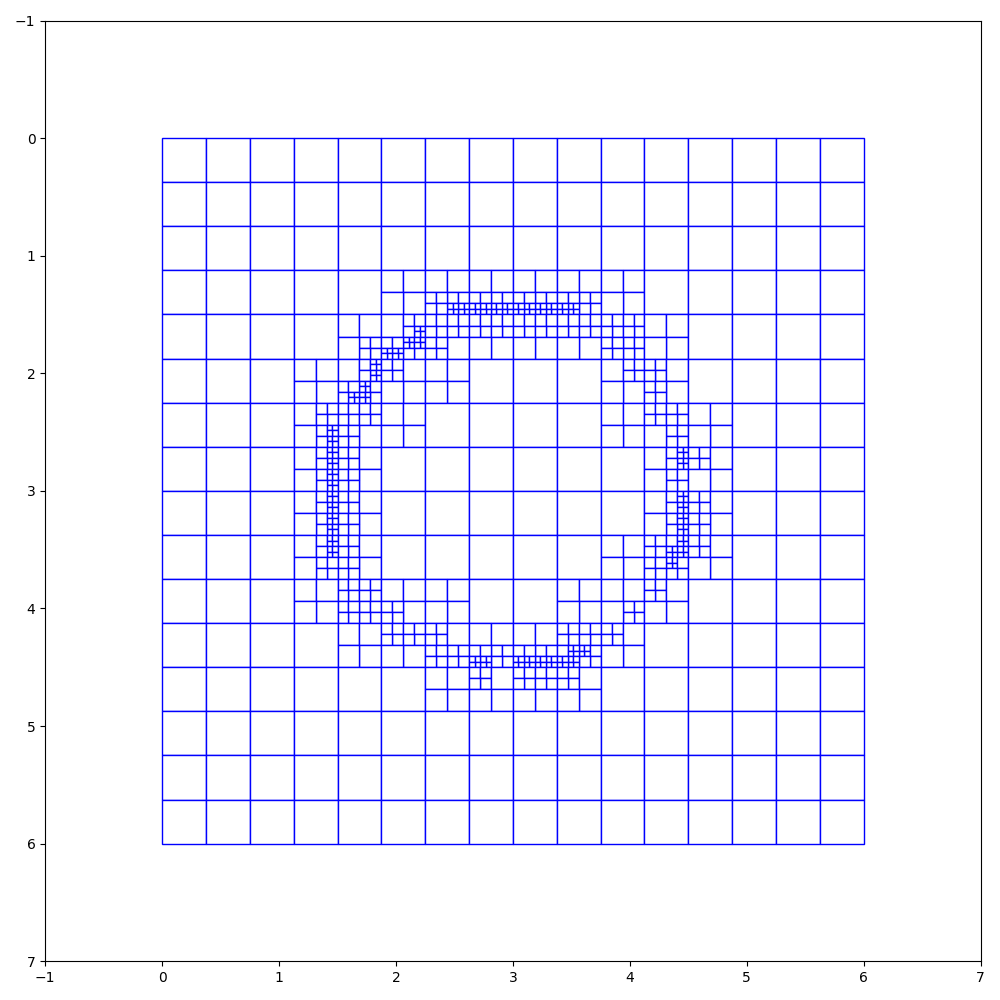}
	}
	\subfloat[dofs: 1,504]{
		\includegraphics[width=\convwidth]{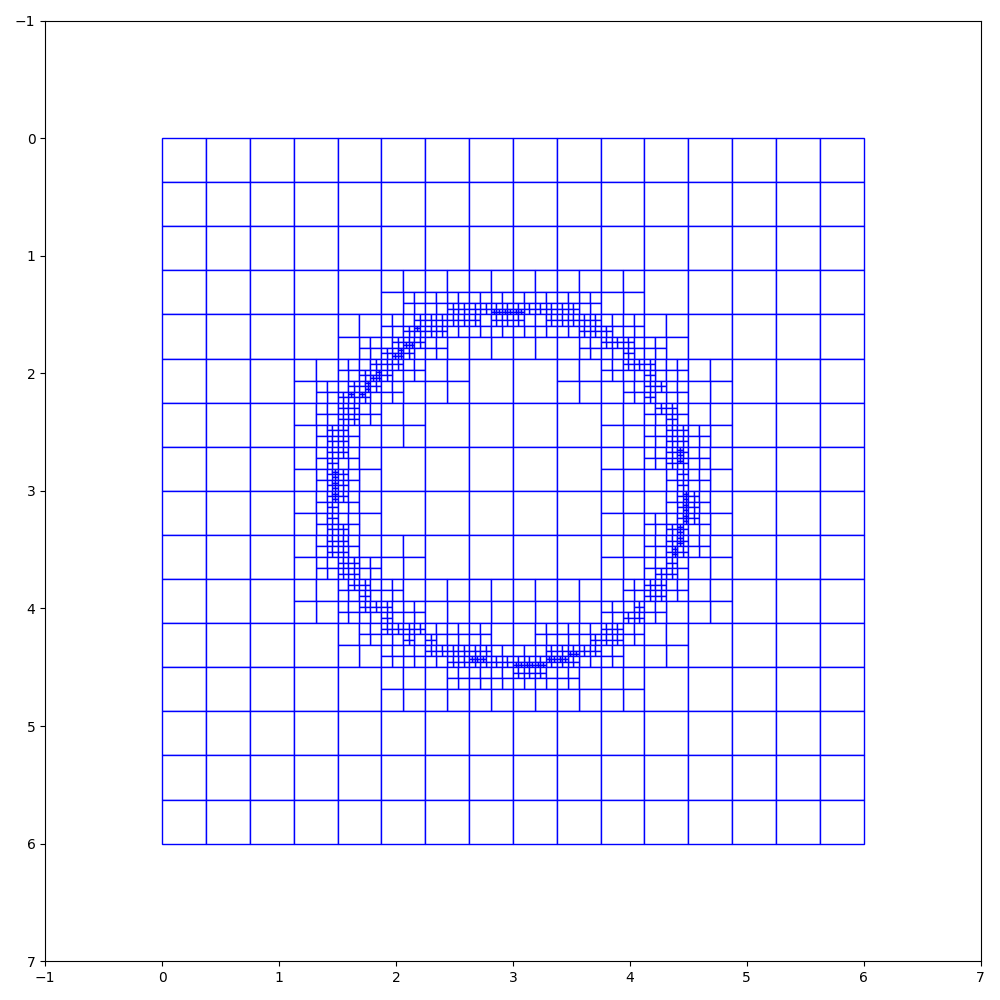}
	}
	\subfloat[dofs: 2,218]{
		\includegraphics[width=\convwidth]{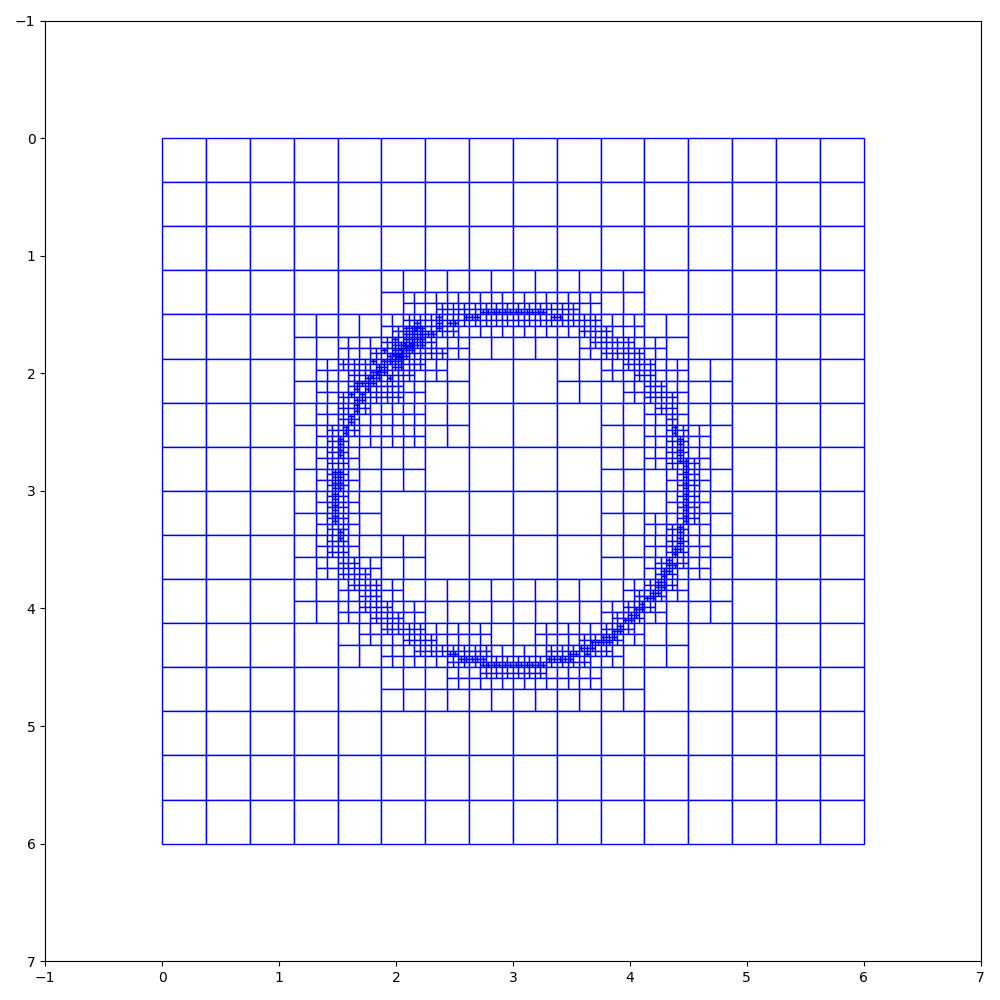}
	}
	\caption{Adapted meshes sequence for the disk example using Algorithm \ref{algo:noiseless}.}
	\label{fig:meshes-convergence}
\end{figure}

\subsubsection{Gaussian Denoising}

For removing additive Gaussian noise with zero mean and standard deviation $\sigma$ from an image ($m=1$), we set $T=Id$ and we use the same algorithm as in the noiseless case, namely Algorithm \ref{algo:noiseless}, with the slight change that a second condition is added to the refinement criterion named the \textit{bulk criterion}. This approach is motivated by \cite{Hintermuller2014}, where an element for refinement is marked when the local residual is bigger than a quantity depending on the noise level and on the size of the element by using a statistical argument in the same spirit as \cite{Dong2011}. Here however, a different \textit{bulk criterion} which do not depend on the size of the element is used: an element $E_i$ with center $\mathbf{x}_i\in\Omega_h$ can be marked for refinement if
\begin{equation} \label{eq:bulk-criterion}
	\frac{1}{2}h_i^2(\mathbf{u_h}-g_h)_i^2 \geq \frac{\sigma^2}{2}.
\end{equation}
Loosely speaking, we do an averaging of the residual on an element $E_i$, and if this quantity is bigger than the noise level, it means that the there is an edge in the element, thus the algorithm refines $E_i$. This condition is similar to the one from \cite{Langer2017Feb}. That is here the marking strategy for refinement runs in two steps: first all elements fulfilling the bulk criterion \eqref{eq:bulk-criterion} are detected and then the Dörfler marking strategy is only applied on these selected elements.

We recall the definition of the \textit{peak signal-to-noise ratio} (PSNR) \cite{Bovik2000}
$$ \text{PSNR} = 10\log_{10} \left( \frac{1}{\frac{1}{N} \sum_{i=0}^N \big((\mathbf{u_h})_i-(g_h)_i\big)^2 } \right). $$
The PSNR is useful to evaluate the proximity of the reconstruction to the original image. In the experiments, we will also use the \textit{structural similarity} (SSIM). The idea of SSIM is to measure the structural similarity between two images, rather than a pixel-to-pixel difference as for example PSNR does. The measure between two windows $x$ and $y$ of common size $\omega\times\omega$ is \cite{wang_multiscale_2003}:
$$ \text{SSIM}(x,y) = \frac{(2\mu_x\mu_y + c_1)(2\sigma_{xy}+c_2)}{(\mu_x^2+\mu_y^2 + c_1)(\sigma_x^2 + \sigma_y^2+c_2)}, $$
where $\mu_x,\mu_y$ is the pixel sample mean of $x,y$ respectively, $\sigma_x^2,\sigma_y^2$ is the variance of $x,y$ respectively, $\sigma_{xy}$ is the covariance of $x$ and $y$, $c_1=0.01^2$ and $c_2=0.03^2$. The constants $c_1$ and $c_2$ are chosen to stabilize the division with weak denominator. Note that when comparing PSNR and SSIM, larger values indicate a better reconstruction than smaller values.

In these experiments, we chose as model parameters $\alpha_1 = 0$, $\alpha_2 = 10$, $\lambda=1$, $\beta = 0$ and $\gamma_1=\gamma_2 = 2\times 10^{-4}$, and we chose $N_\text{max ref} = 6$, $\theta_\text{mark} = 0.6$ and $\varepsilon = 1\times 10^{-3}$.

Table \ref{tab:denoising} shows the PSNR and the mean SSIM (MSSIM) for the denoising with both uniform and adaptive mesh for several noise level $\sigma$. We do the experiment using the \textit{Middlebury} dataset. For image processing tasks like denoising, we aim to align the finely refined grid with the image pixels to preserve discontinuities. Therefore, we crop and resize the input image so that its width and height are powers of two (more precisely to $256\times 256$ pixels). Otherwise, the grid nodes may not align properly, leading to a blurred reconstruction. 
In every example, using a uniform mesh generates results with larger PSNR and MSSIM
and less computation time
than using the coarse-to-fine adaptive refinement strategy.
We observe that in the coarse-to-fine approach, sometimes the PSNR increases as the noise level increases. This is because noise results in a noisy refinement indicator, and since the bulk criterion cannot differentiate edges from noise perfectly, the algorithm tends to refine the grid more uniformly, leading to more accurate reconstructions. Moreover, most of the computation time is due to the projection step of Algorithm \ref{algo:noiseless}, which indicate that projections might not efficiently implemented.

In Figure \ref{fig:denoising}, we give from left to right, the original image from the \textit{Middlebury} dataset, the noisy image used as input for the algorithm ($\sigma=0.1$), the denoised image with a uniform mesh and the denoised image with an adaptive mesh. A sequence of meshes for the adaptive algorithm is given in Figure \ref{fig:meshes-denoising}.

\begin{table}[h]
	\centering
	\begin{tabular}{ll||ccc|ccc}
		\multirow{2}{*}{Benchmark} & \multirow{2}{*}{$\sigma$} & \multicolumn{3}{c|}{Uniform} & \multicolumn{3}{c}{Adaptive} \\
	                               &                           & PSNR  & MSSIM & Time (s)     & PSNR  & MSSIM & Time (s) \\
		\hline\hline
		Dimetrodon                 &          $0.1$            & 29.21 & 0.80  &    5.39    & 27.60 & 0.73  & 23.96 \\
		                           &          $0.05$           & 29.59 & 0.80  &    6.07    & 27.69 & 0.75  & 24.05 \\
		                           &          $0.01$           & 29.68 & 0.80  &    6.05    & 27.07 & 0.74  & 23.92 \\
		\hline
		Grove2                     &          $0.1$            & 25.28 & 0.71  &    7.02    & 23.54 & 0.63  & 23.01 \\
	                               &          $0.05$           & 25.38 & 0.71  &    5.90    & 23.34 & 0.63  & 26.18 \\
	                               &          $0.01$           & 25.39 & 0.71  &    6.36    & 23.26 & 0.62  & 22.98 \\
		\hline
		RubberWhale                &          $0.1$            & 30.53 & 0.79  &    6.35    & 28.90 & 0.75  & 22.07 \\
		                           &          $0.05$           & 31.26 & 0.80  &    5.75    & 29.71 & 0.78  & 65.20 \\
	                               &          $0.01$           & 31.46 & 0.81  &    6.01    & 29.45 & 0.78  & 22.27 \\
		\hline
		Urban2                     &          $0.1$            & 29.75 & 0.76  &    6.20    & 28.32 & 0.70  & 23.76 \\
		                           &          $0.05$           & 30.00 & 0.76  &    5.60    & 28.00 & 0.72  & 23.70 \\
		                           &          $0.01$           & 30.06 & 0.76  &    5.61    & 27.88 & 0.71  & 23.74 \\
		\hline
	\end{tabular}\vspace{0.4cm}
	
	\caption{Quantitative results: peak signal-to-noise ratio (PSNR), structural similarity (SSIM), and computational times in seconds.}
	\label{tab:denoising}
\end{table}
\def\mywidth{1.8cm}
\begin{figure}[h]
	\centering
	\subfloat{
		\includegraphics[width=\mywidth]{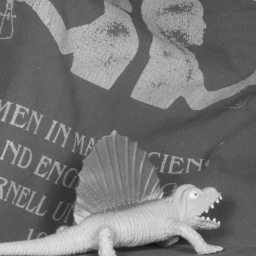}
	}
	\subfloat{
		\includegraphics[width=\mywidth]{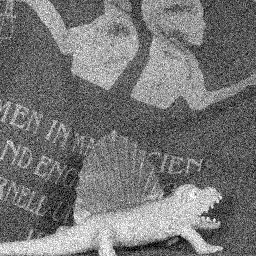}
	}
	\subfloat{
		\includegraphics[width=\mywidth]{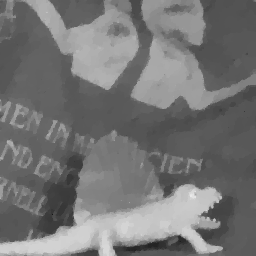}
	}
	\subfloat{
		\includegraphics[width=\mywidth]{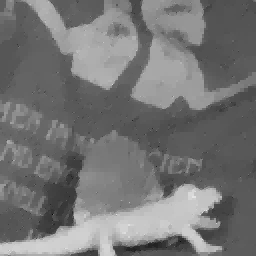}
	}
	\hfill
	\subfloat{
		\includegraphics[width=\mywidth]{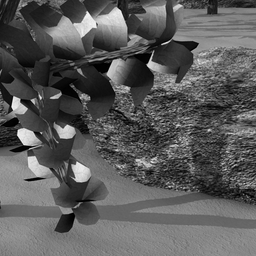}
	}
	\subfloat{
		\includegraphics[width=\mywidth]{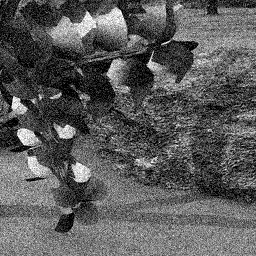}
	}
	\subfloat{
		\includegraphics[width=\mywidth]{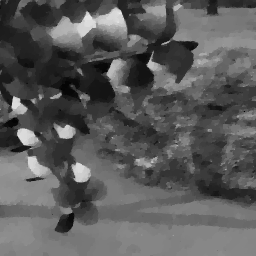}
	}
	\subfloat{
		\includegraphics[width=\mywidth]{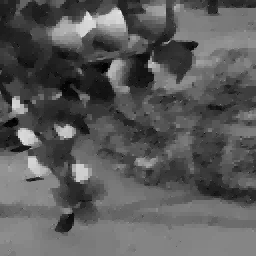}
	}
	%
	\quad
	\subfloat{
		\includegraphics[width=\mywidth]{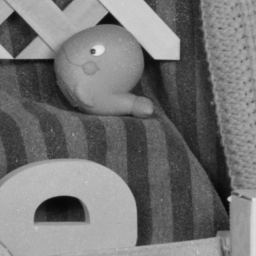}
	}
	\subfloat{
		\includegraphics[width=\mywidth]{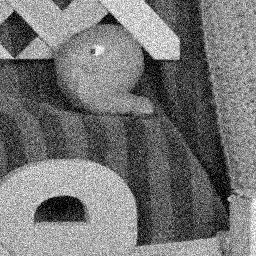}
	}
	\subfloat{
		\includegraphics[width=\mywidth]{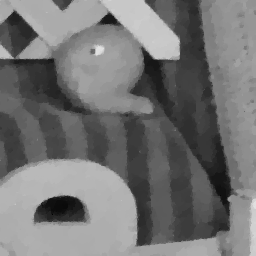}
	}
	\subfloat{
		\includegraphics[width=\mywidth]{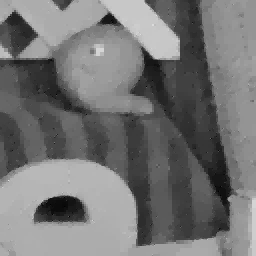}
	}
	\hfill
	\subfloat{
		\includegraphics[width=\mywidth]{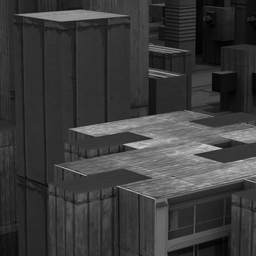}
	}
	\subfloat{
		\includegraphics[width=\mywidth]{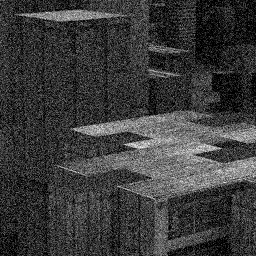}
	}
	\subfloat{
		\includegraphics[width=\mywidth]{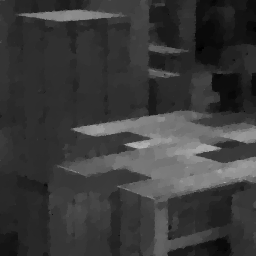}
	}
	\subfloat{
		\includegraphics[width=\mywidth]{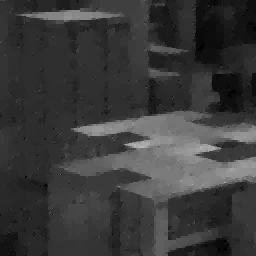}
	}
	
	\caption{Middlebury Denoising Benchmark with $\sigma=0.1$; the original image ($1^\text{st}$ and $5^\text{th}$ column), the noisy image $f$ with $\sigma=0.1$ ($2^\text{nd}$ and $6^\text{th}$ column), the denoised image using Algorithm \ref{algo:semismooth-newton} on a uniform mesh ($3^\text{rd}$ and $7^\text{th}$ column), and the denoised image using Algorithm \ref{algo:noiseless} and the bulk criterion \eqref{eq:bulk-criterion} ($4^\text{th}$ and $8^\text{th}$ column). Benchmarks from top-left to bottom-right: \textit{Dimetrodon}, \textit{Grove2}, \textit{RubberWhale}, \textit{Urban2}.}
	\label{fig:denoising}
\end{figure}

\def\denoisingwidth{3.cm}
\begin{figure}[h]
	\centering
	\subfloat[dofs: 256]{
		\includegraphics[width=\denoisingwidth]{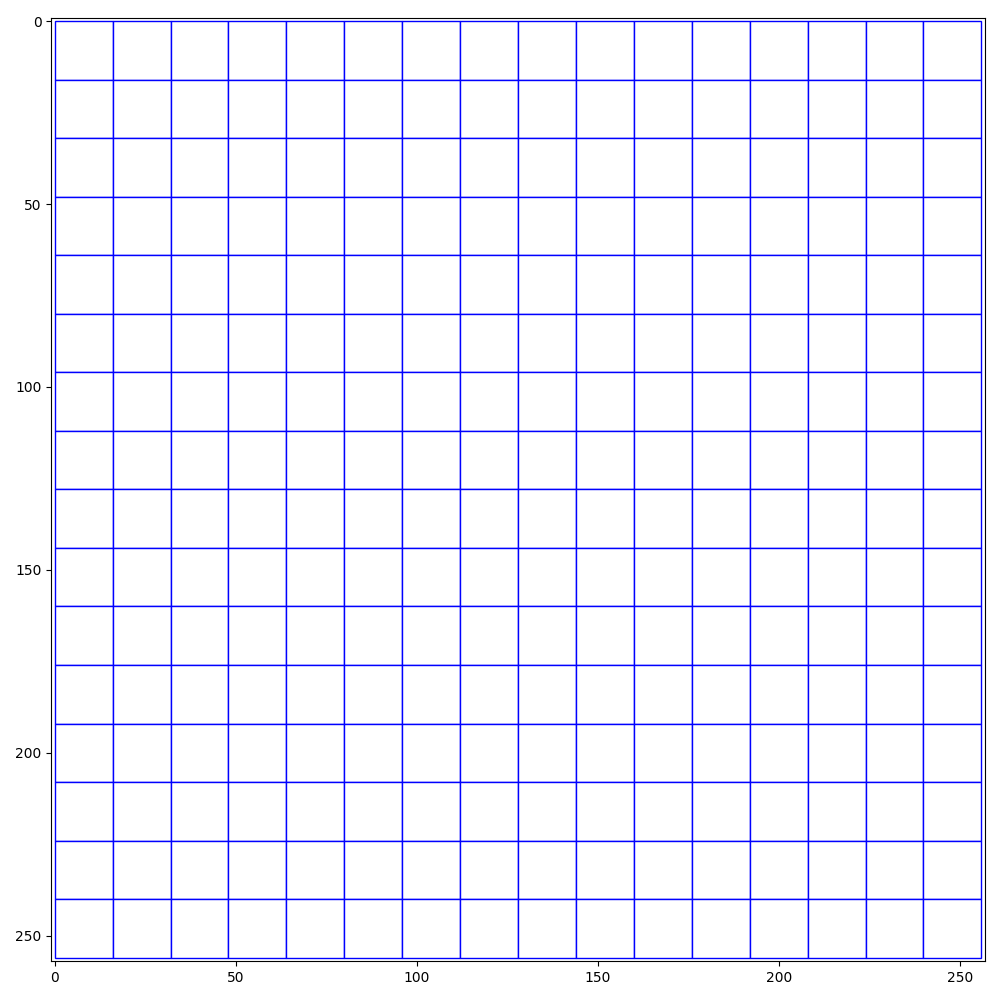}
	}
	\subfloat[dofs: 628]{
		\includegraphics[width=\denoisingwidth]{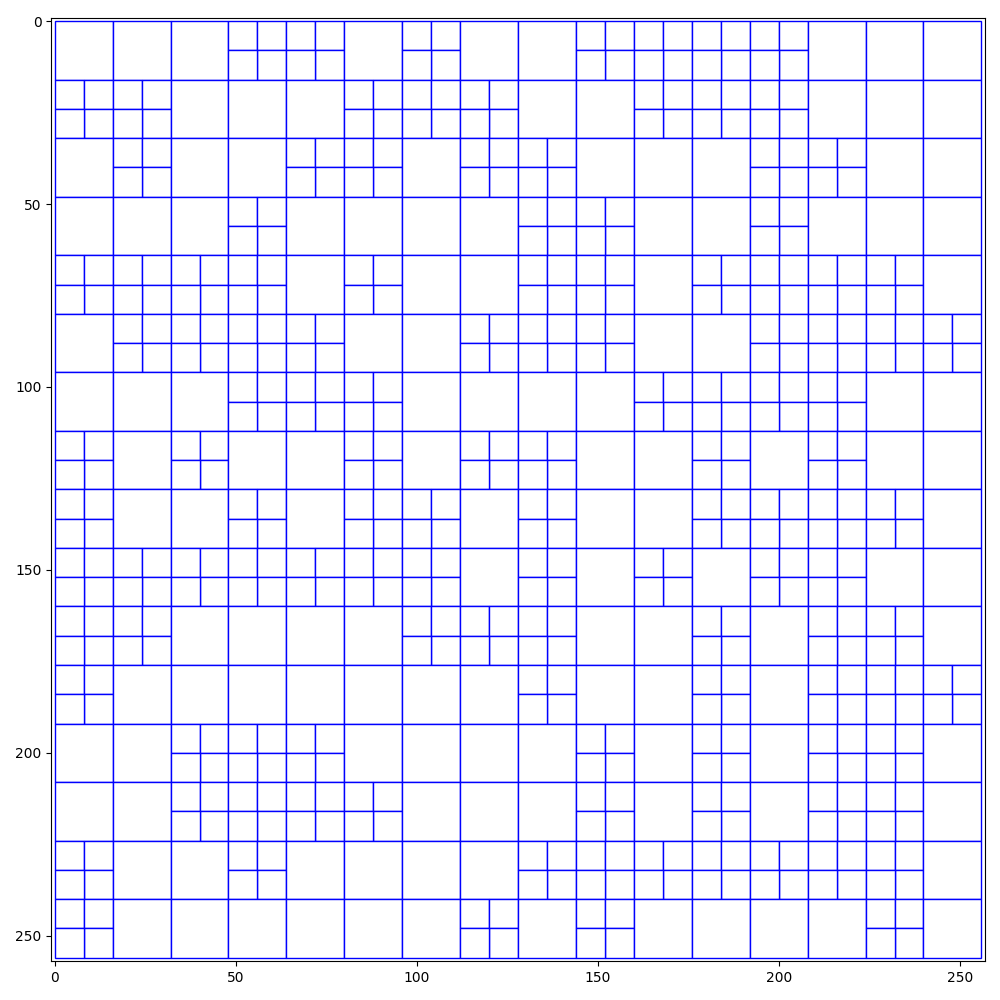}
	}
	\subfloat[dofs: 1,963]{
		\includegraphics[width=\denoisingwidth]{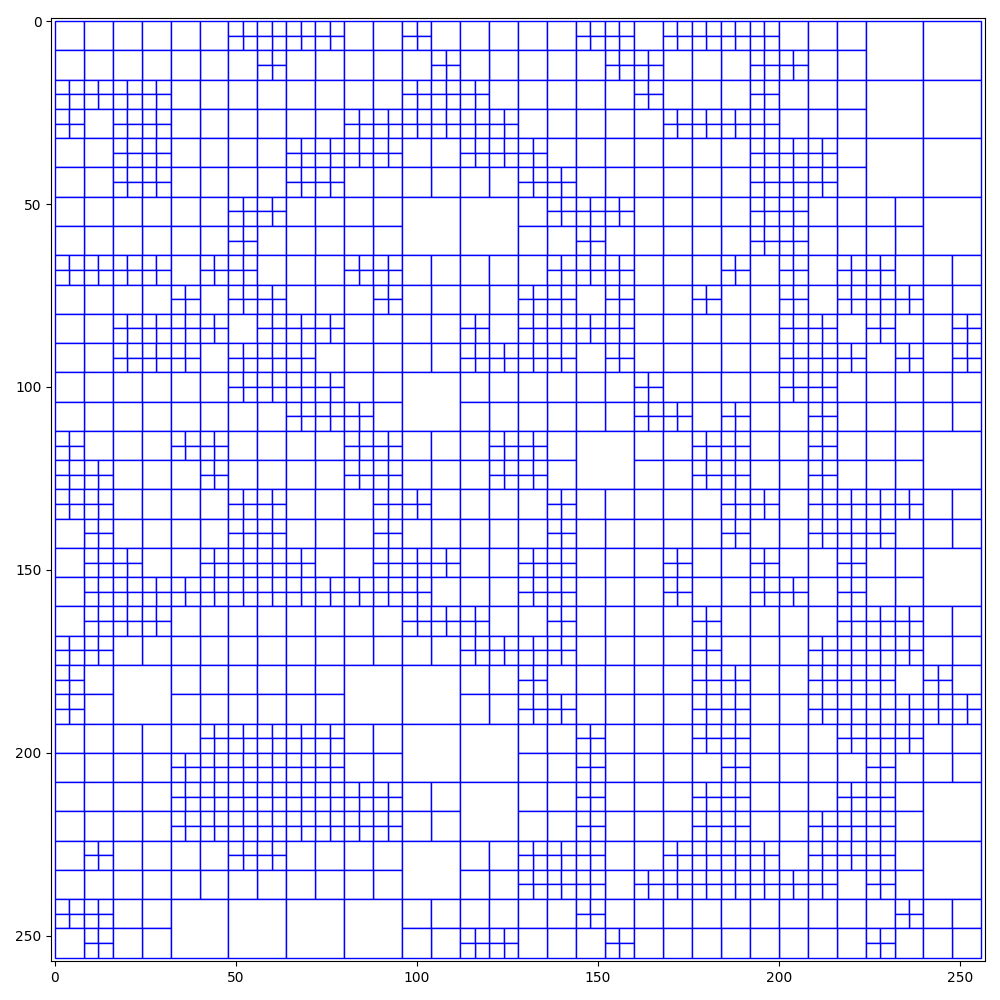}
	}
	\subfloat[dofs: 6,058]{
		\includegraphics[width=\denoisingwidth]{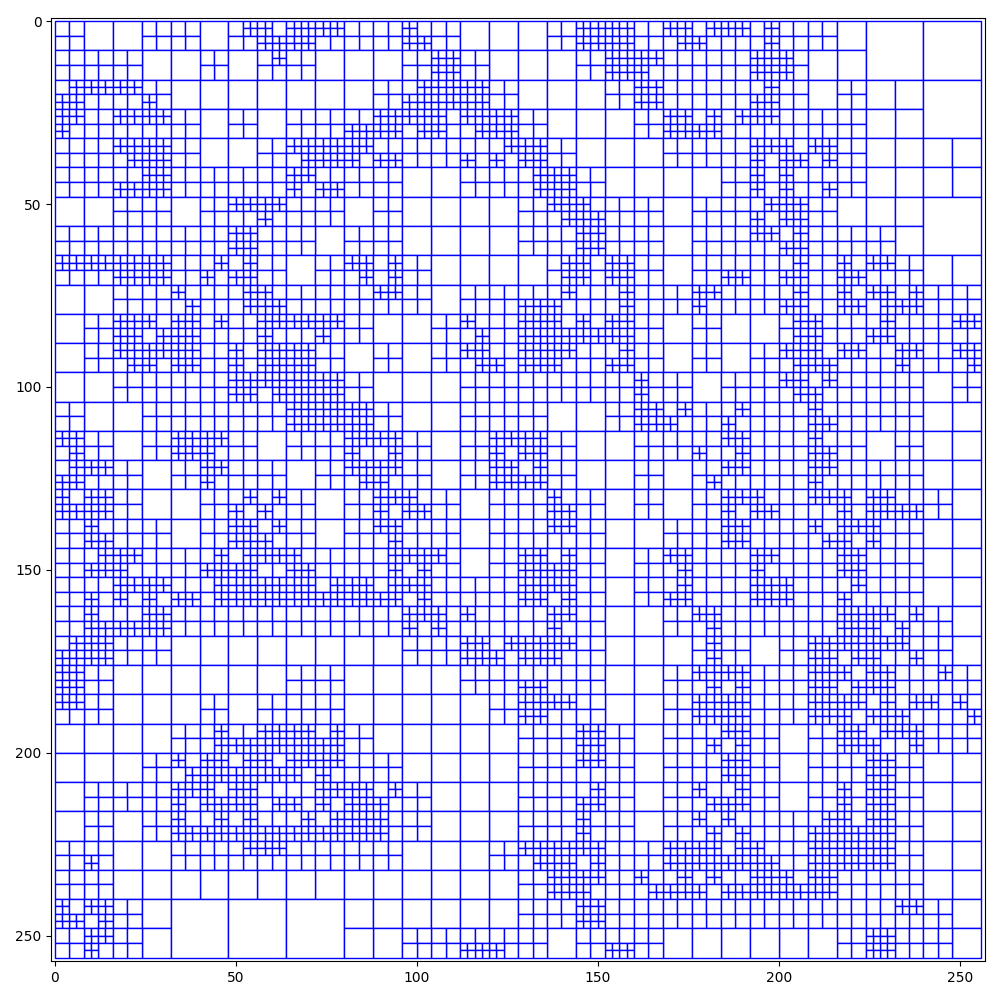}
	}
	\subfloat[dofs: 18,706]{
		\includegraphics[width=\denoisingwidth]{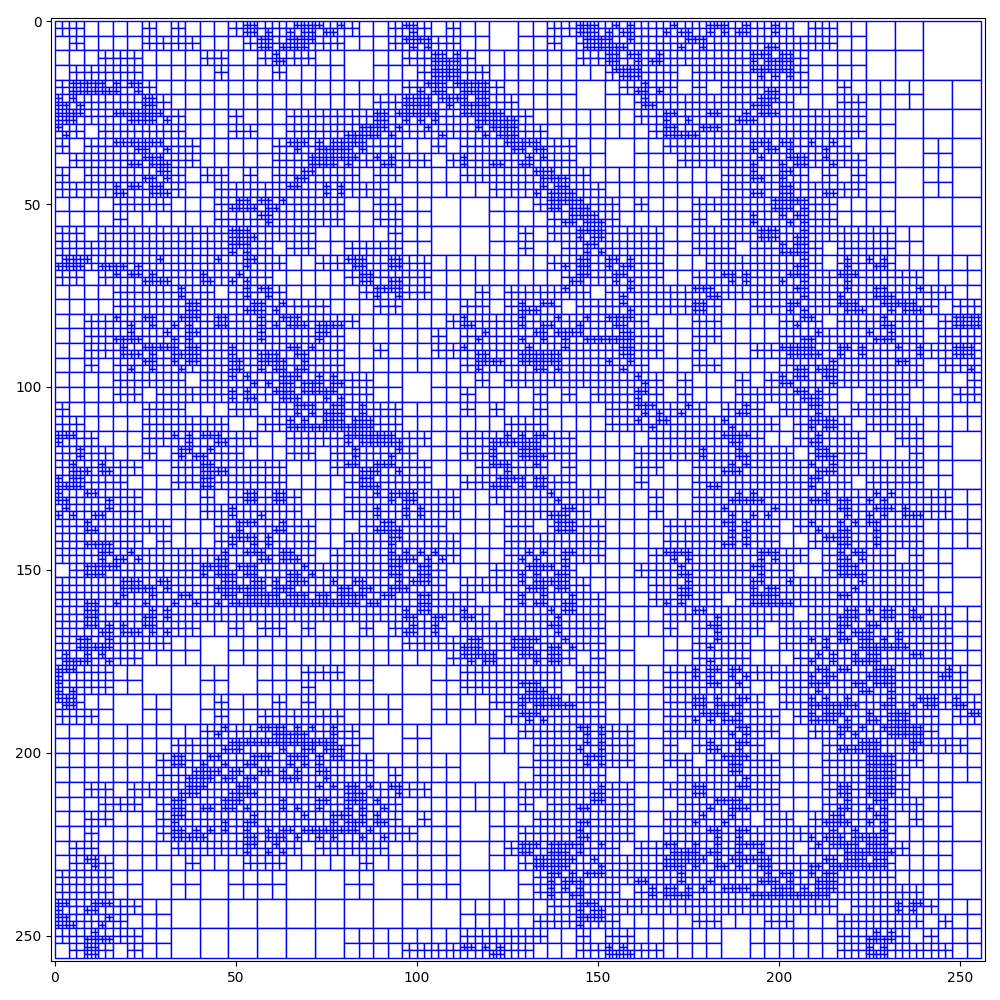}
	}
	\caption{Adapted meshes sequence for Algorithm \ref{algo:noiseless} with the \textit{bulk criterion} \eqref{eq:bulk-criterion} on the \textit{RubberWhale} example ($\sigma=0.05$).}
	\label{fig:meshes-denoising}
\end{figure}

\subsection{Application to Optical Flow Computation}

The estimation of optical flow consists in finding a vector field $\mathbf{u} = (u_x,u_y)$, called the optical flow, describing the motion of each pixel between two frames $f_0,f_1 : \Omega\subset \R^2 \to [0,1]$ of a given sequence. To this end, it is classical to consider the so-called \textit{brightness constancy assumption} which assumes that the intensity of pixels remains constant along their trajectories. More precisely, for $\mathbf{x}\in\Omega$, we assume that
\begin{equation} \label{eq:brightness-constancy}
	f_0(\mathbf{x}) = f_1\big(\mathbf{x}+\mathbf{u}(\mathbf{x})\big).
\end{equation}
Exceeding displacements $\mathbf{x}+\mathbf{u}(\mathbf{x}) \not\in\Omega$ are ignored. We use the $L^1$-$L^2$-$\TV$ model to estimate the optical flow by setting
$$ T = (\partial_x f_1 \quad \partial_y f_1), \quad S=\nabla, \quad\text{and}\quad g := T\mathbf{u_0} - (f_1-f_0), $$
which leads to
$$ T_h = (\partial_x^\text{c} f_1 \quad \partial_y^\text{c} f_1), \quad S_h=\nabla_h, \quad\text{and}\quad g_h = T_h\mathbf{u}_{\mathbf{h},0} - (f_{h,1}-f_{h,0}), $$
with $\mathbf{u}_0$ a given initial guess of the optical flow and $\mathbf{u}_{\mathbf{h},0}$ its discretization, and $f_{h,0},f_{h,1}$ the discretization of $f_0$ and $f_1$. We recall in Algorithm \ref{algo:warping}, the optical flow estimation algorithm using warping \cite[Algorithm 3]{Hilb2023Jul}. The warping technique consists in replacing $f_1$ by a \textit{warped} image $f_\text{w}(\mathbf{x}) := f_1\big(\mathbf{x}+\mathbf{u}_0(\mathbf{x})\big)$, introduced after the linearization of the optical flow equation \eqref{eq:brightness-constancy}. The linearized optical flow equation provides flow information only in the image gradient direction, a phenomenon also known as \textit{aperture problem}. The optical flow estimation algorithm with warping reads: 

\IncMargin{1.5em}
\begin{algorithm}[h]
    \SetAlgoLined
    \KwData{images $f_{h,0},f_{h,1}$, initial optical flow guess $\mathbf{u}_{\mathbf{h},0}$.}
    \KwResult{motion fields $(\mathbf{u}_{\mathbf{h},k})_{k\in\N}$.}
	$\Omega_h$ is a uniform grid where the degrees of freedom are located in the center of the pixels\;
    $f_{\text{w},0}(\mathbf{x}) = f_{h,1}\big(\mathbf{x}+\mathbf{u}_{\mathbf{h},0}(\mathbf{x})\big)$\;
    $k=0$\;
    \While{the stopping criterion is not fulfilled}{
        compute $\mathbf{u}_{\mathbf{h},k}$ using the semi-smooth Newton algorithm (Algorithm \ref{algo:semismooth-newton}) on $\Omega_h$\;
        $f_{\text{w},k}(\mathbf{x}) = f_{h,1}\big(\mathbf{x}+\mathbf{u}_{\mathbf{h},k}(\mathbf{x})\big)$\;
        $k = k + 1$\;
    }   
    
    \caption{Optical flow algorithm with warping \cite[Algorithm 3]{Hilb2023Jul}.}
    \label{algo:warping}
\end{algorithm}
\DecMargin{1.5em}
Here, the line $f_{\text{w},k}(\mathbf{x}) = f_{h,1}\big(\mathbf{x}+\mathbf{u}_{\mathbf{h},k}(\mathbf{x})\big)$ is to be understood as a bilinear interpolation on the finest uniform grid. We use in Algorithm \ref{algo:warping} the stopping criterion
$$ \frac{\|f_{\text{w},k-1}-f_{h,0}\|_{L^2(\Omega_h)} - \|f_{\text{w},k}-f_{h,0}\|_{L^2(\Omega_h)}}{\|f_{\text{w},k-1}-f_{h,0}\|_{L^2(\Omega_h)}} \leq \varepsilon_\text{warp}, $$
for some specified constant $\varepsilon_\text{warp}$, which ensures that warping continues only as long as the remaining image difference $\|f_{\text{w},k}-f_{h,0}\|_{L^2(\Omega_h)}$ is being reduced sufficiently. Now, we recall in Algorithm \ref{algo:warping-while-ctf} the optical flow estimation algorithm with adaptive warping \cite[Algorithm 1]{Alkamper2024}:

\IncMargin{1.5em}
\begin{algorithm}[h]
    \SetAlgoLined
    \KwData{images $f_{h,0},f_{h,1}$ of size $w\times h$, initial optical flow guess $\mathbf{u}_{\mathbf{h},0}$, maximal number of refinement $N_\text{ref\_max}\in\N$.}
    \KwResult{motion fields $(\mathbf{u}_{\mathbf{h},k})_{k\in\N}$.}

    $f_{\text{w},0}(\mathbf{x}) = f_1\big(\mathbf{x}+\mathbf{u}_{\mathbf{h},0}(\mathbf{x})\big)$\;
    $\Omega_h^1$ uniform coarse grid of size $\lfloor\frac{w}{2^{N_\text{ref\_max}}}\rfloor \times \lfloor\frac{h}{2^{N_\text{ref\_max}}}\rfloor$\;
    $k=1$\;
    $N_\text{ref}=0$\;
    \While{$N_\text{ref} < N_\text{ref\_max}$}{
        compute $\mathbf{u}_{\mathbf{h},k}$ using the semi-smooth Newton algorithm (Algorithm \ref{algo:semismooth-newton}) on the mesh $\Omega_h^k$\;
        $f_{\text{w},k}(\mathbf{x}) = f_{h,1}\big(\mathbf{x}+\mathbf{u}_{\mathbf{h},k}(\mathbf{x})\big)$\;
        \If{$\frac{\|f_{\text{w},k-1}-f_{h,0}\|_{L^2(\Omega_h)} - \|f_{\text{w},k}-f_{h,0}\|_{L^2(\Omega_h)}}{\|f_{\text{w},k-1}-f_{h,0}\|_{L^2(\Omega_h)}} \leq \varepsilon_\text{warp}$}{
            refine the grid $\Omega_h^k$, save it in $\Omega_h^{k+1}$ and reproject image data ($L^2$-projection)\;
            $N_\text{ref} = N_\text{ref} + 1$\;
        }
        $k = k + 1$\;
    }   
    
    \caption{Optical flow algorithm with adaptive warping \cite[Algorithm 1]{Alkamper2024}.}
    \label{algo:warping-while-ctf}
\end{algorithm}
\DecMargin{1.5em}

Unlike the image reconstruction case, we choose to refine $75\%$ of the degree of freedom with the highest error $\eta_h$ instead of using the Dörfler marking strategy. In practice, for optical flow, the primal-dual gap error is significant at a few elements, indicating that only a small portion of elements contribute to the overall error. Consequently, the Dörfler strategy results in an excessively sparse grid, leading to inaccurate optical flow estimation.

We recall the definition of the endpoint error \cite{Baker2007}
$$ \text{EE} = \sqrt{(u_x-u_{\text{GT},x})^2 + (u_y-u_{\text{GT},y})^2}, $$
where $\mathbf{u}=(u_x,u_y)$ is an approximation of the optical flow and $\mathbf{u}_\text{GT} = (u_{\text{GT},x}, u_{\text{GT},y})$ is the \textit{ground-truth} optical flow and the definition of the angular error \cite{Baker2007}
$$ \text{AE} = \arccos{\frac{1 + u_x u_{\text{GT},x} + u_y u_{\text{GT},y}}{\sqrt{1 + u_x^2 + u_y^2}\sqrt{1 + u_{\text{GT},x}^2 + u_{\text{GT},y}^2}}}. $$
These two errors are the most commonly used measures of performance for optical flow. The AE between a flow vector $\mathbf{u}=(u_x,u_y)$ and the ground-truth flow $\mathbf{u}_\text{GT} = (u_{\text{GT},x}, u_{\text{GT},y})$ is the angle between the estimated optical flow vectors and the \textit{ground-truth} vectors, while the EE quantify the Euclidean norm of the difference between the estimated optical flow vectors and the \textit{ground-truth} vectors.

In our optical flow experiments, we set $\alpha_1 = 3$, $\alpha_2 = 0$, $\lambda=1$, $\beta = 1\times 10^{-5}$ and $\gamma_1=\gamma_2 = 2\times 10^{-4}$ and as initial optical flow guess $\mathbf{u}_{\mathbf{h},0} = 0$. For Algorithm \ref{algo:warping} and Algorithm \ref{algo:warping-while-ctf}, we chose $\varepsilon_\text{warp} = 5\times 10^{-2}$ and for Algorithm \ref{algo:semismooth-newton}, we chose $\varepsilon = 1\times 10^{-3}$.

In Table \ref{tab:optical-flow}, we give endpoint errors (EE) and angular errors (AE), each given by mean and standard deviation, and computational times in seconds for the \textit{Middlebury} dataset where the \textit{ground-truth} optical flows are known for the computed optical flow $\mathbf{u}$ without warping (first iteration of Algorithm \ref{algo:warping}), with warping (Algorithm \ref{algo:warping}) and the computed optical flow $\mathbf{u}$ using the adaptive warping (Algorithm \ref{algo:warping-while-ctf}). 
The \textit{ground-truth} optical flow is the exact optical flow between the image $f_{h,0}$ and $f_{h,1}$. Unlike denoising, it is uncertain whether the discontinuities in optical flow align with those in the images. Therefore, there is no need to adjust the size of the input images, as the alignment of discontinuities is not as critical for optical flow estimation.

For the examples \textit{Dimetrodon}, \textit{Grove3}, \textit{Hydrangea}, \textit{RubberWhale} and \textit{Venus} the uniform grid with warping, i.e. Algorithm \ref{algo:warping}, gives the lowest endpoint and angular errors. However, this accuracy has a price in terms of number of degrees of freedom (dofs), and as a result, of computation time. On the other hand, the results without warping give the highest errors in all the examples due to the linearization of the optical flow estimation problem. Finally, the adaptive warping, Algorithm \ref{algo:warping-while-ctf}, gives similar results as for the uniform warping, but with a computation cost similar as for the non-warping case. For \textit{Grove2}, \textit{Urban2} and \textit{Urban3}, we notice that the errors with adaptive warping are even smaller than uniform warping.

We give in Figure \ref{fig:optical-flow} the images from the \textit{Middlebury} dataset $f_0$, $f_1$, the image difference $f_0-f_1$, the computed optical flow using without warping (first iteration of Algorithm \ref{algo:warping}) and with warping (Algorithm \ref{algo:warping}), the computed optical flow using the adaptive warping (Algorithm \ref{algo:warping-while-ctf}) and the \textit{ground-truth} optical flow. The color-coded images representing optical flow fields are normalized by the maximum motion of the \textit{ground-truth} flow data and black areas of the ground truth data represent unknown flow information, e.g. due to occlusion.

Figure \ref{fig:meshes} depicts a sequence of adapted meshes with the criterion \eqref{eq:grid-ref-indicator-reg-shifted}. The mesh seems to be finer near the edges of the \textit{ground-truth}, which is where the movement is concentrated.

Note that the results are similar to the ones in \cite{Alkamper2024}, where the adaptive finite element method have been used to solve the semi-smooth Newton iterations.

\begin{table}[!htbp]
    \centering
    \begin{tabular}{ll|ccccc}
        Benchmark & Algorithm & EE-mean & EE-stddev & AE-mean & AE-stddev & Time (s) \\
        \hline\hline
        Dimetrodon  & Uniform without warping              & 1.47 & 0.74 & 0.63 & 0.36 & 429.44 \\
                    & Uniform with warping                    & 0.26 & 0.25 & 0.19 & 0.49 & 2,598.29 \\
                    & Adaptive warping, Algorithm \ref{algo:warping-while-ctf} & 0.42 & 0.37 & 0.23 & 0.48 & 1,066.91 \\
        \hline
        Grove2      & Uniform without warping              & 2.79 & 0.47 & 0.98 & 0.18 & 726.93 \\
                    & Uniform with warping                   & 1.09 & 1.16 & 0.35 & 0.42 & 9,211.81 \\
                    & Adaptive warping, Algorithm \ref{algo:warping-while-ctf} & 0.38 & 0.62 & 0.10 & 0.18 & 1,686.60 \\
        \hline
        Grove3      & Uniform without warping              & 3.51 & 2.35 & 0.86 & 0.34 & 723.02 \\
                    & Uniform with warping                   & 1.22 & 1.75 & 1.18 & 0.30 & 13,010.78 \\
                    & Adaptive warping, Algorithm \ref{algo:warping-while-ctf} & 1.32 & 1.67 & 0.22 & 0.36 & 1,823.77 \\
        \hline
        Hydrangea   & Uniform without warping              & 3.10 & 1.20 & 0.78 & 0.29 & 349.94 \\
                    & Uniform with warping                   & 0.25 & 0.53 & 0.14 & 0.41 & 6,830.48 \\
                    & Adaptive warping, Algorithm \ref{algo:warping-while-ctf} & 0.45 & 0.69 & 0.18 & 0.43 & 1,307.07 \\
        \hline
        RubberWhale & Uniform without warping              & 0.54 & 0.60 & 0.30 & 0.32 & 424.89 \\
                    & Uniform with warping                   & 0.29 & 0.54 & 0.17 & 0.34 & 1,799.16 \\
                    & Adaptive warping, Algorithm \ref{algo:warping-while-ctf} & 0.47 & 0.61 & 0.29 & 0.38 & 1,257.92 \\
        \hline
        Urban2      & Uniform without warping              & 7.95 & 8.12 & 0.82 & 0.42 &  793.44 \\
                    & Uniform with warping                   & 6.19 & 7.70 & 0.27 & 0.32 &10,212.01 \\
                    & Adaptive warping, Algorithm \ref{algo:warping-while-ctf} & 1.52 & 2.30 & 0.17 & 0.31 & 1,493.25 \\
        \hline
        Urban3      & Uniform without warping              & 6.89 & 4.45 & 1.02 & 0.29 & 2,401.56 \\
                    & Uniform with warping                   & 4.86 & 4.11 & 0.49 & 0.62 & 11,855.79 \\
                    & Adaptive warping, Algorithm \ref{algo:warping-while-ctf} & 2.38 & 3.59 & 0.39 & 0.79 & 1,401.86 \\
        \hline
        Venus       & Uniform without warping              & 3.32 & 1.72 & 0.82 & 0.26 & 273.16 \\
                    & Uniform with warping                   & 0.70 & 1.01 & 0.19 & 0.44 & 4,514.02 \\
                    & Adaptive warping, Algorithm \ref{algo:warping-while-ctf} & 1.16 & 1.33 & 0.32 & 0.59 & 744.36 \\
    \end{tabular}\vspace{0.4cm}
    
    \caption{Quantitative results: endpoint errors (EE) and angular errors (AE), each given by mean and standard deviation, and computational times in seconds.}
    \label{tab:optical-flow}
\end{table}

\begin{figure}[!htbp]
	\centering
	\subfloat{
		\includegraphics[width=2.2cm]{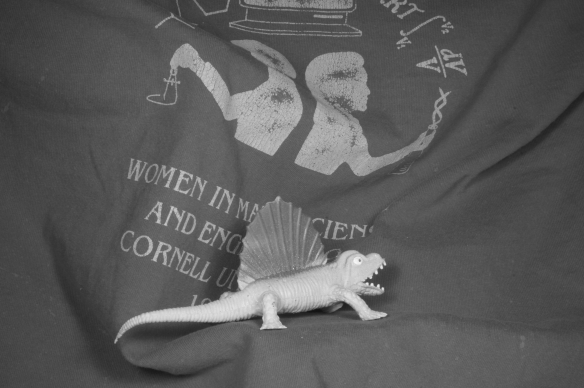}
	}
	\subfloat{
		\includegraphics[width=2.2cm]{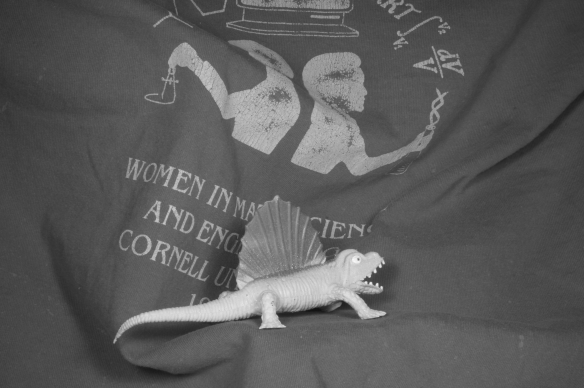}
	}
	\subfloat{
		\includegraphics[width=2.2cm]{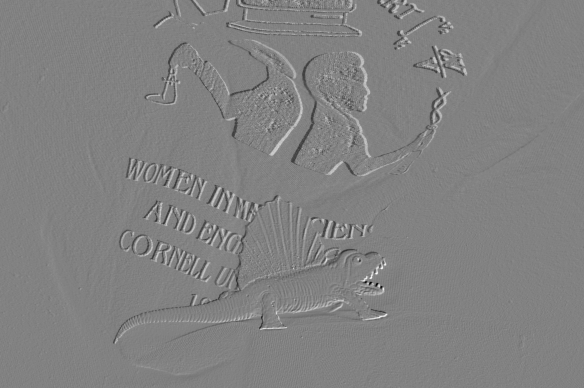}
	}
    \subfloat{
		\includegraphics[width=2.2cm]{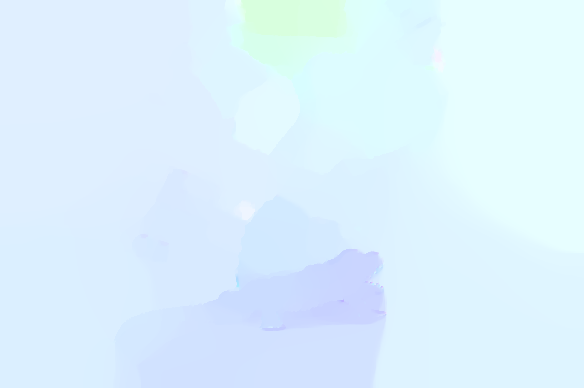}
	}
        \subfloat{
		\includegraphics[width=2.2cm]{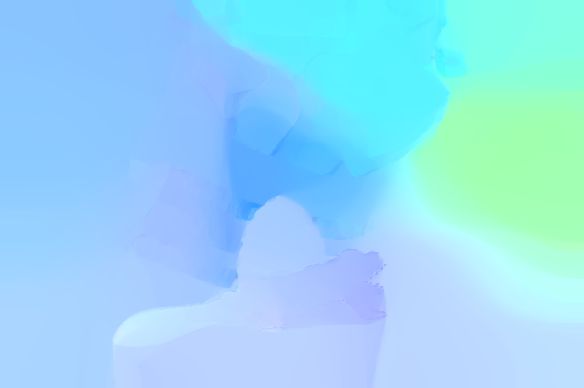}
	}
        \subfloat{
		\includegraphics[width=2.2cm]{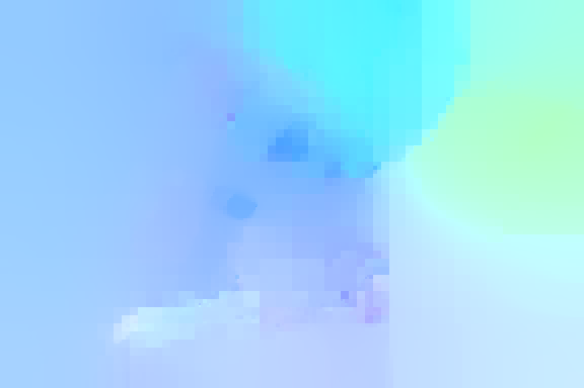}
	}
        \subfloat{
		\includegraphics[width=2.2cm]{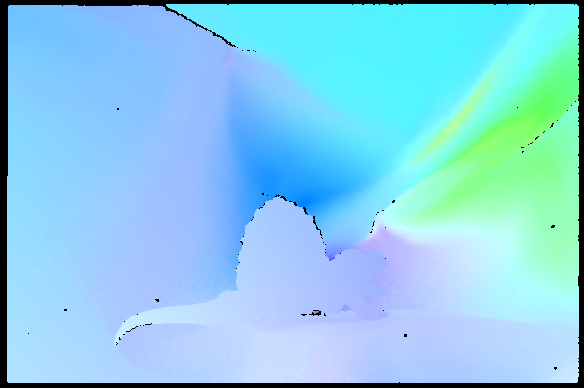}
	}
    \quad
    \subfloat{
		\includegraphics[width=2.2cm]{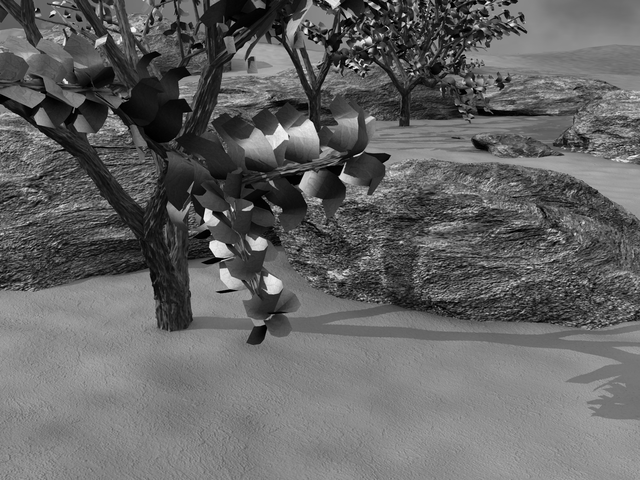}
	}
	\subfloat{
		\includegraphics[width=2.2cm]{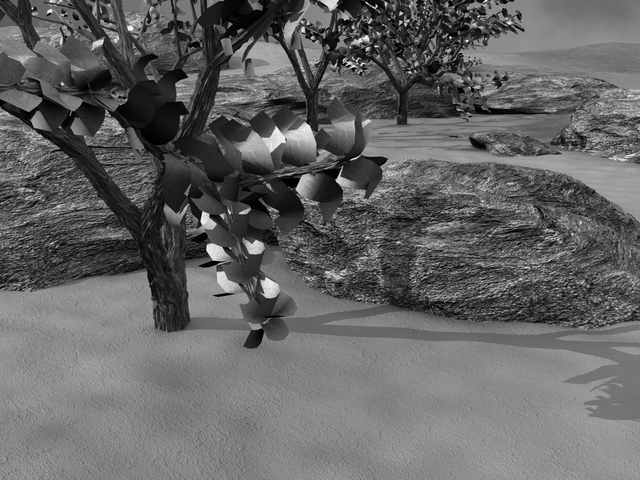}
	}
	\subfloat{
		\includegraphics[width=2.2cm]{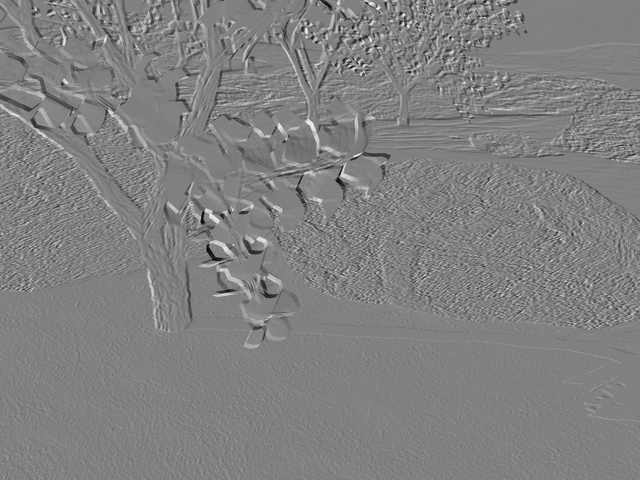}
	}
    \subfloat{
		\includegraphics[width=2.2cm]{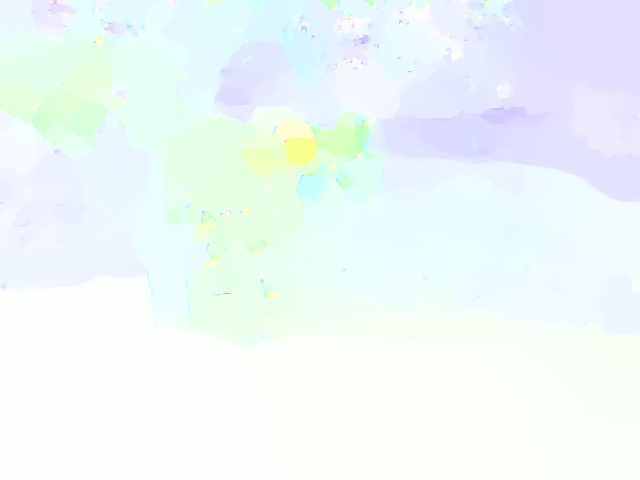}
	}
    \subfloat{
		\includegraphics[width=2.2cm]{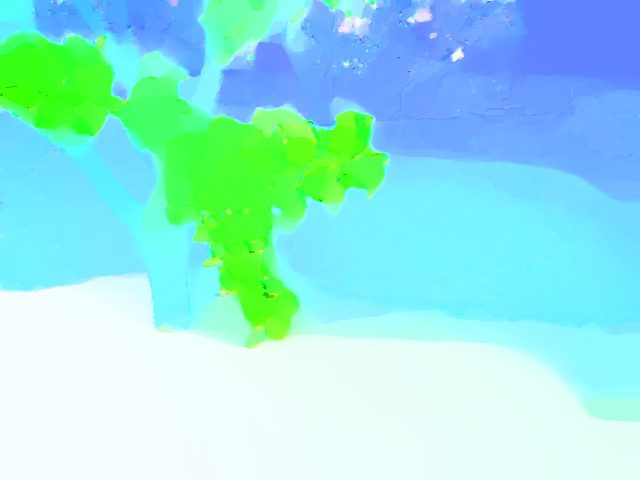}
	}
        \subfloat{
	\includegraphics[width=2.2cm]{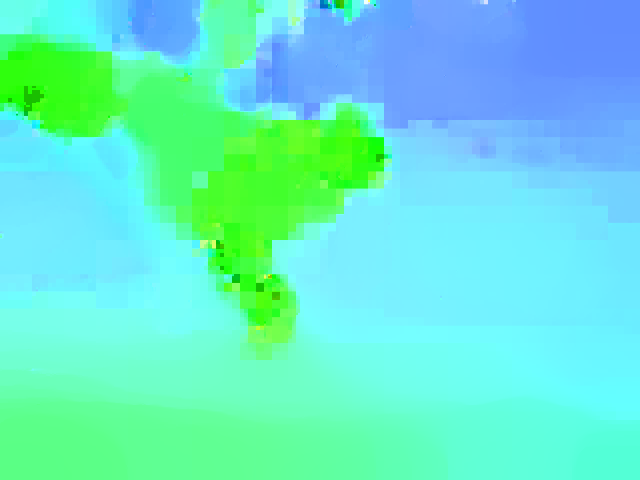}
	}
        \subfloat{
		\includegraphics[width=2.2cm]{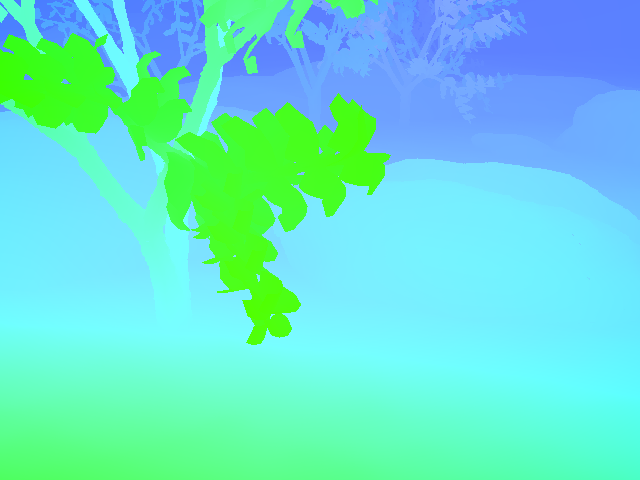}
	}
    \quad
    \subfloat{
		\includegraphics[width=2.2cm]{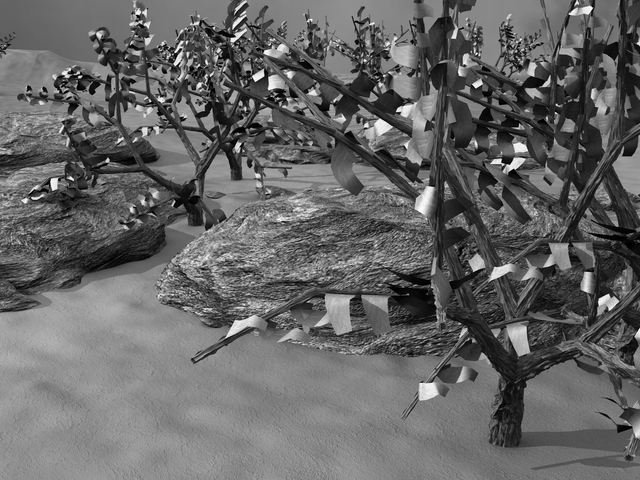}
	}
	\subfloat{
		\includegraphics[width=2.2cm]{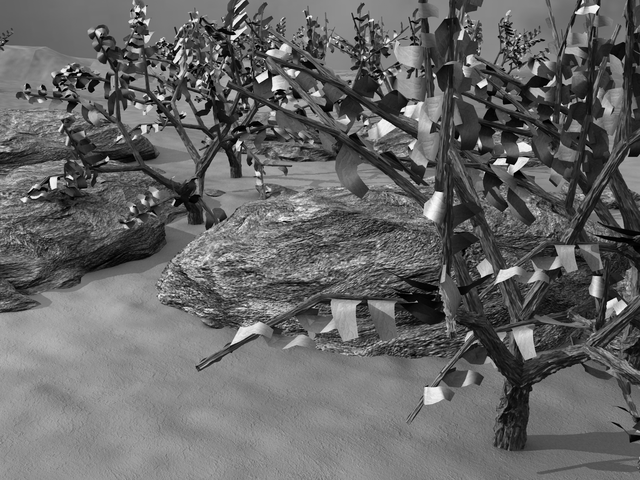}
	}
	\subfloat{
		\includegraphics[width=2.2cm]{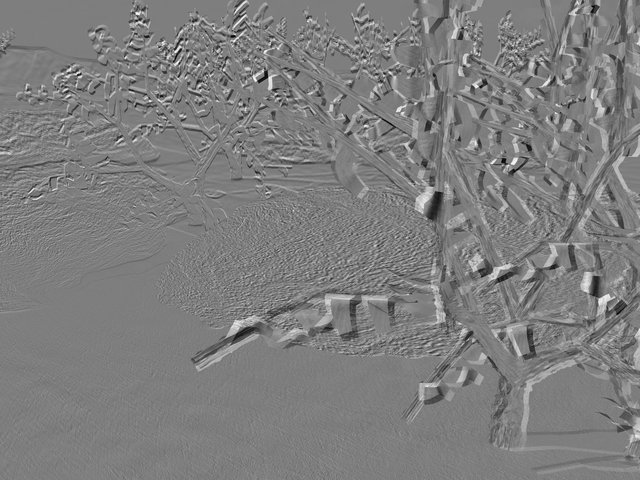}
	}
        \subfloat{
		\includegraphics[width=2.2cm]{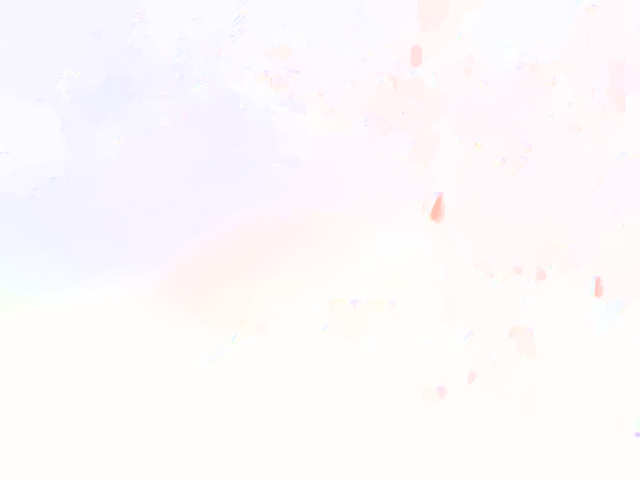}
	}
        \subfloat{
		\includegraphics[width=2.2cm]{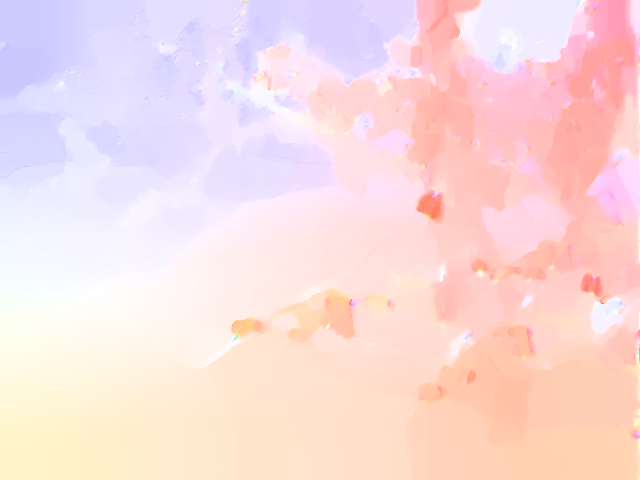}
	}
        \subfloat{
		\includegraphics[width=2.2cm]{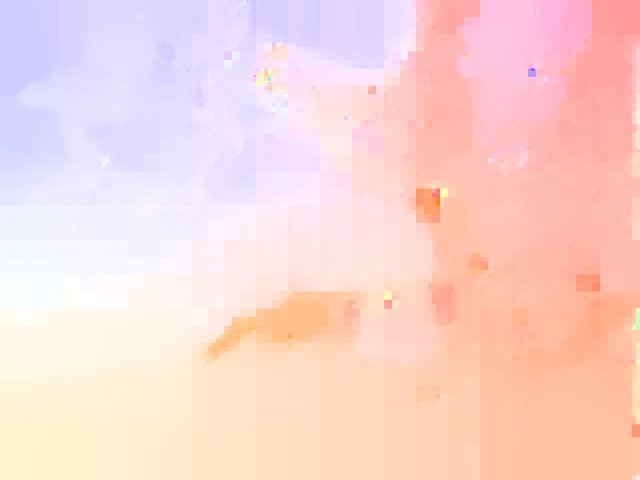}
	}
        \subfloat{
		\includegraphics[width=2.2cm]{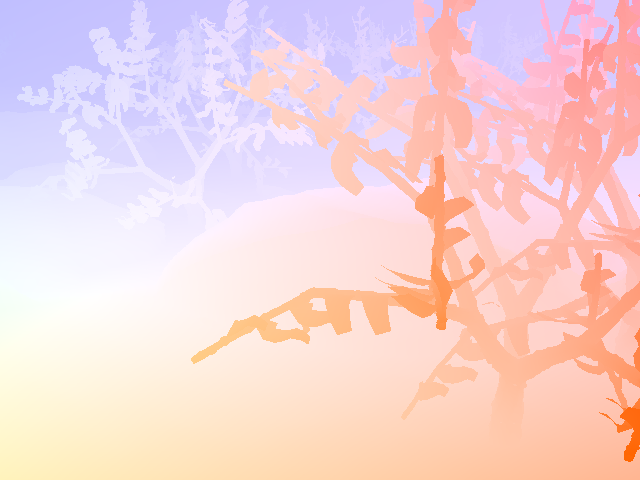}
	}
        \quad
        \subfloat{
		\includegraphics[width=2.2cm]{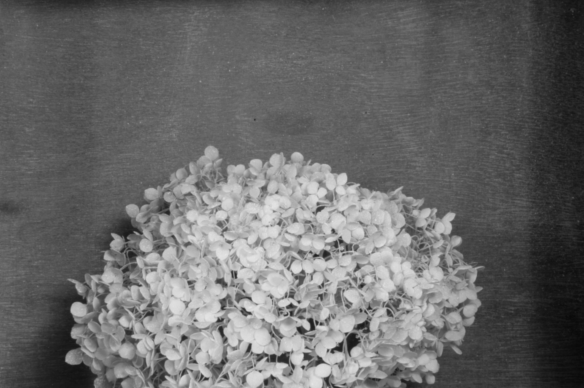}
	}
	\subfloat{
		\includegraphics[width=2.2cm]{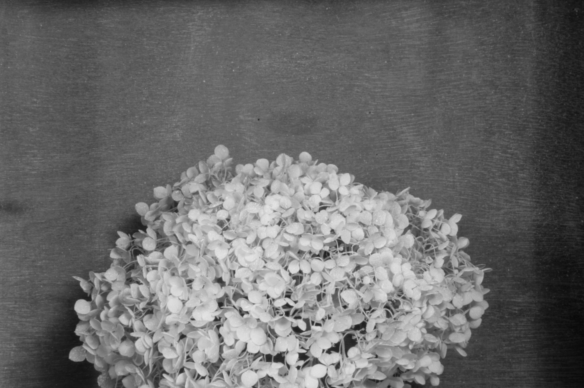}
	}
	\subfloat{
		\includegraphics[width=2.2cm]{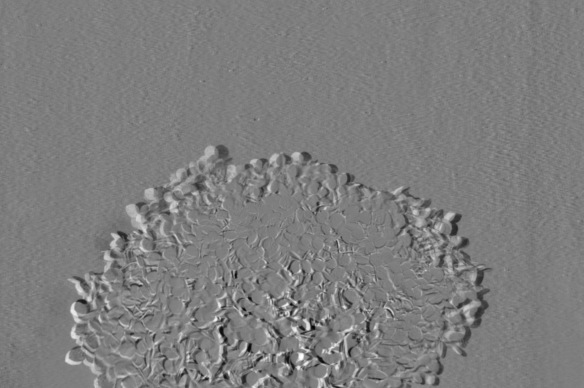}
	}
        \subfloat{
		\includegraphics[width=2.2cm]{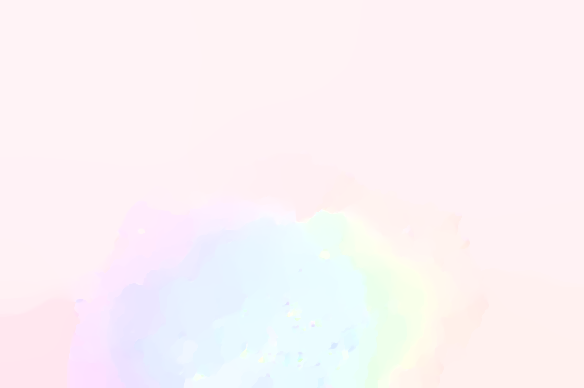}
	}
        \subfloat{
		\includegraphics[width=2.2cm]{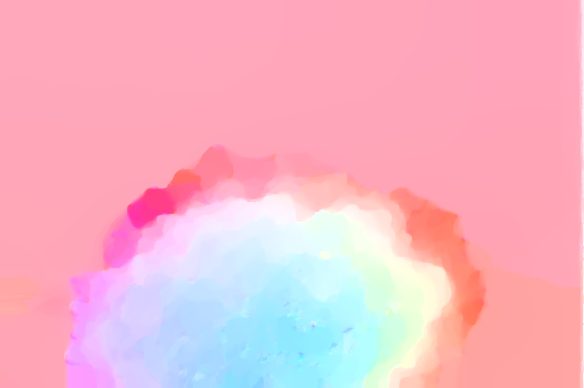}
	}
        \subfloat{
		\includegraphics[width=2.2cm]{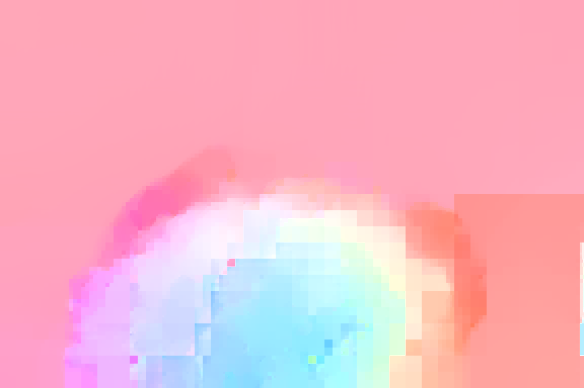}
	}
        \subfloat{
		\includegraphics[width=2.2cm]{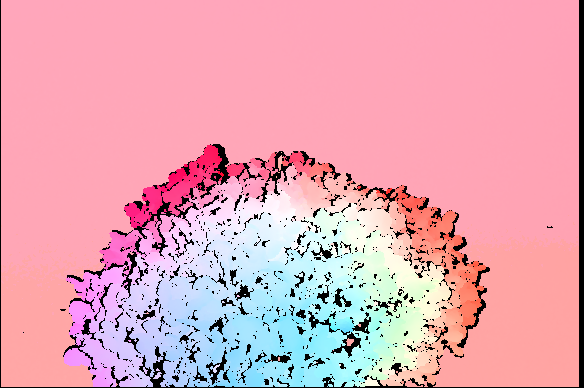}
	}
        \quad
        \subfloat{
		\includegraphics[width=2.2cm]{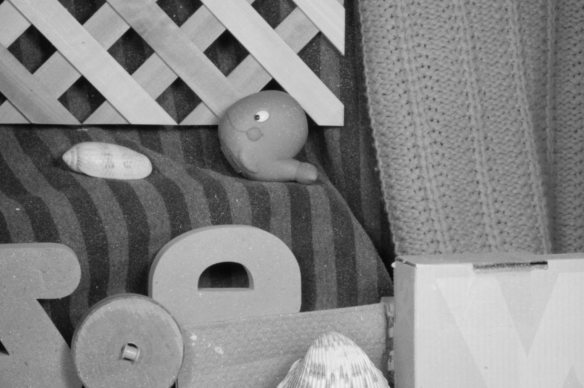}
	}
	\subfloat{
		\includegraphics[width=2.2cm]{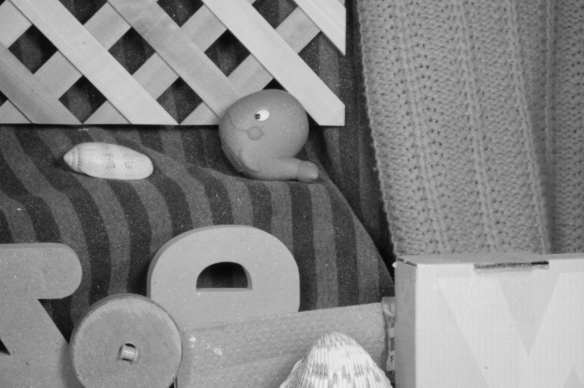}
	}
	\subfloat{
		\includegraphics[width=2.2cm]{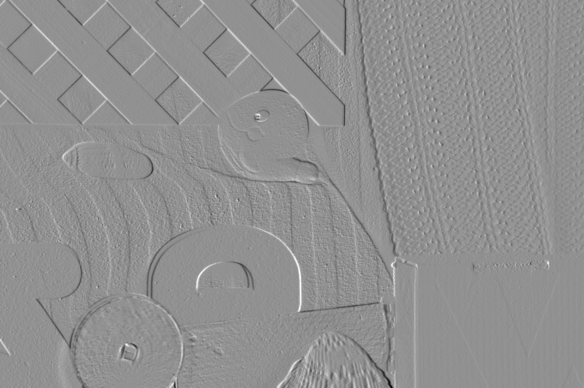}
	}
        \subfloat{
		\includegraphics[width=2.2cm]{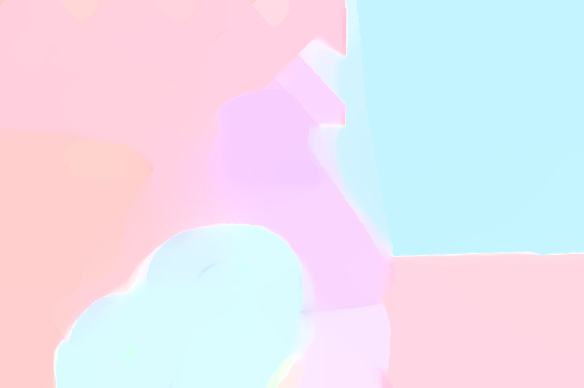}
	}
        \subfloat{
		\includegraphics[width=2.2cm]{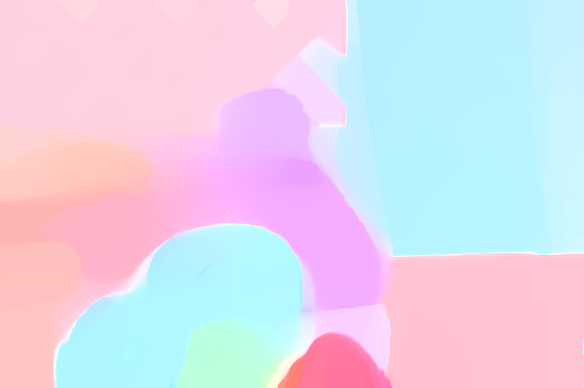}
	}
        \subfloat{
		\includegraphics[width=2.2cm]{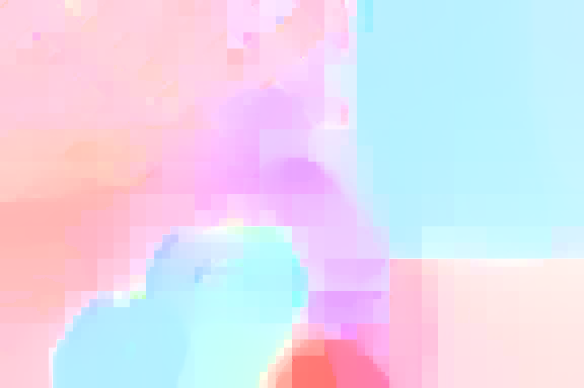}
	}
        \subfloat{
		\includegraphics[width=2.2cm]{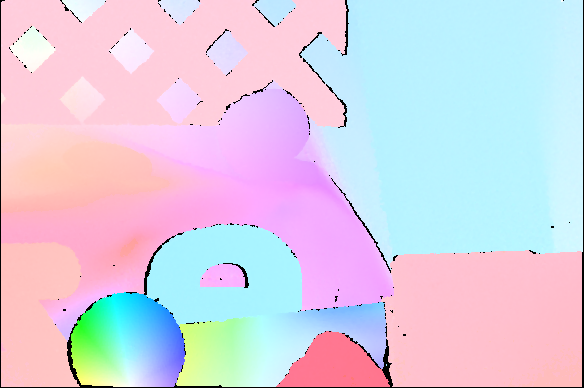}
	}
        \quad
        \subfloat{
		\includegraphics[width=2.2cm]{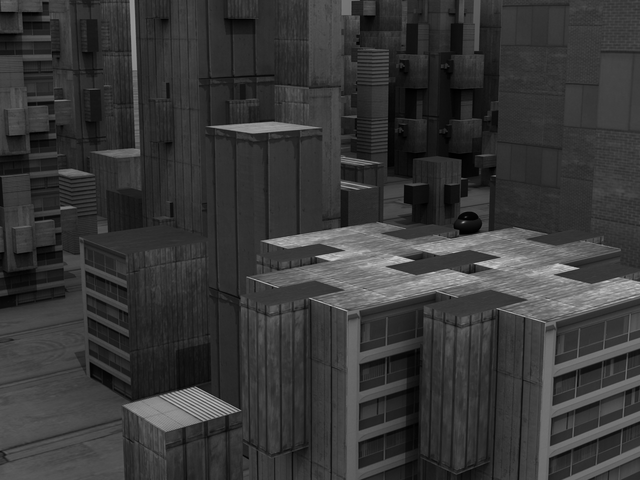}
	}
	\subfloat{
		\includegraphics[width=2.2cm]{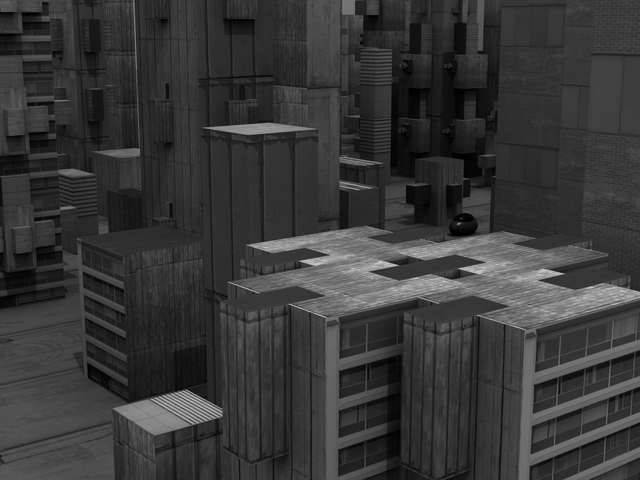}
	}
	\subfloat{
		\includegraphics[width=2.2cm]{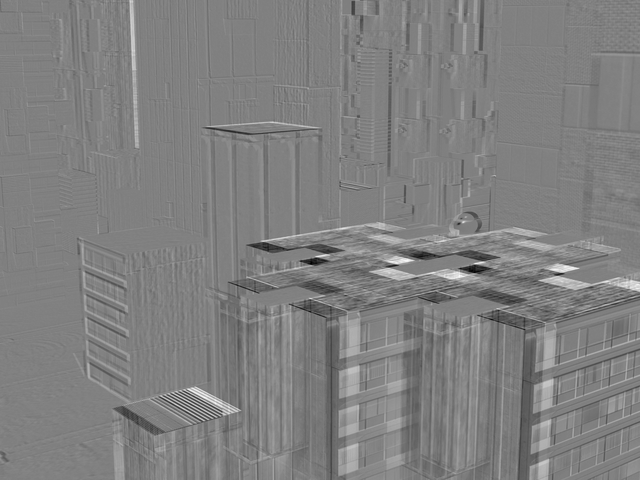}
	}
        \subfloat{
		\includegraphics[width=2.2cm]{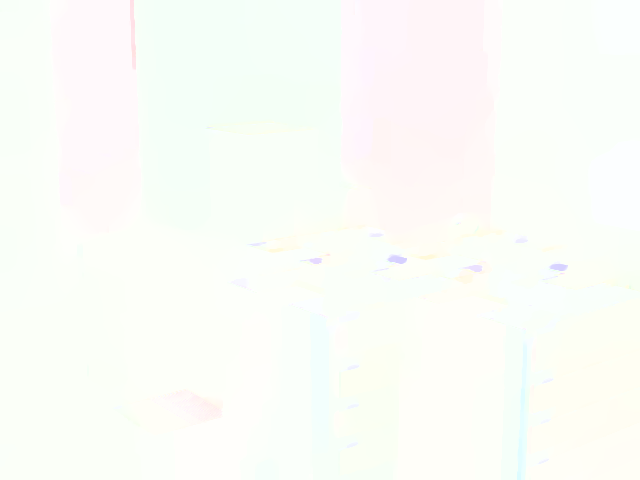}
	}
        \subfloat{
		\includegraphics[width=2.2cm]{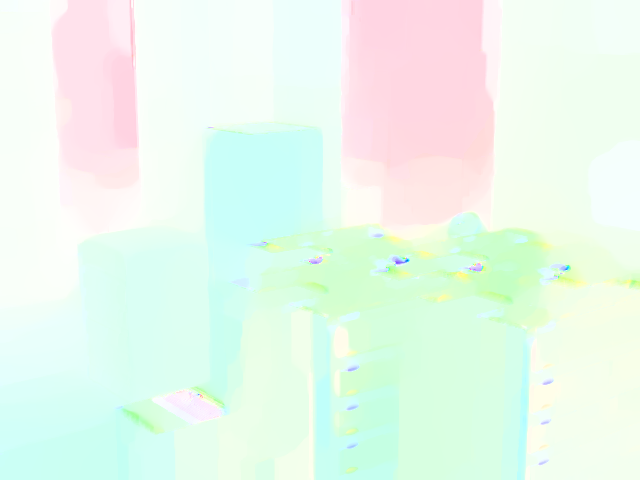}
	}
        \subfloat{
		\includegraphics[width=2.2cm]{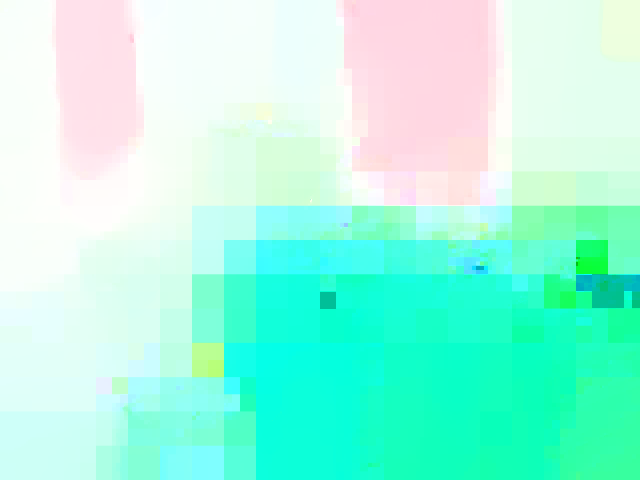}
	}
        \subfloat{
		\includegraphics[width=2.2cm]{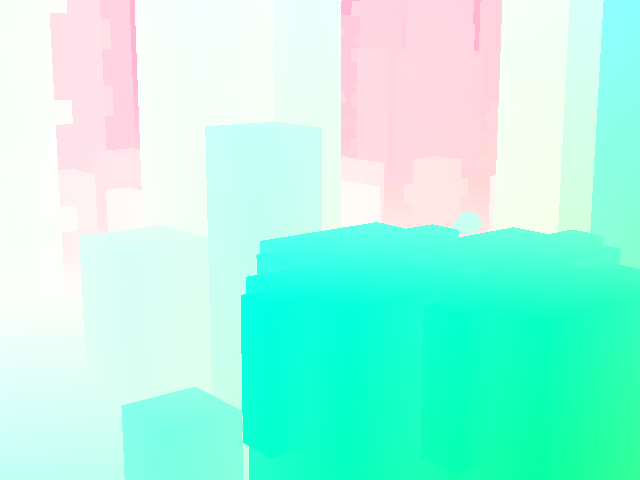}
	}
        \quad
        \subfloat{
		\includegraphics[width=2.2cm]{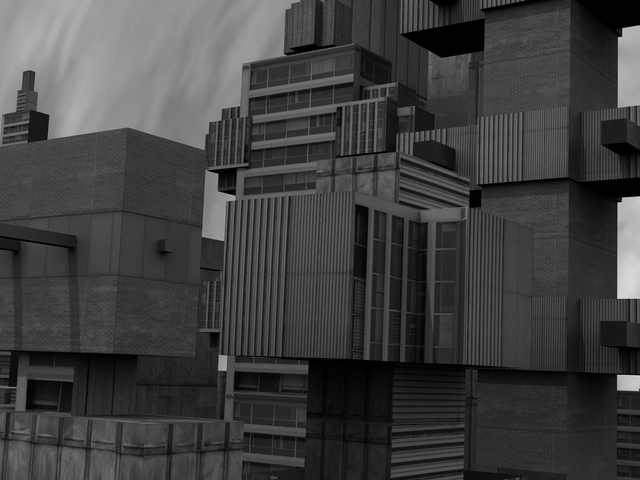}
	}
	\subfloat{
		\includegraphics[width=2.2cm]{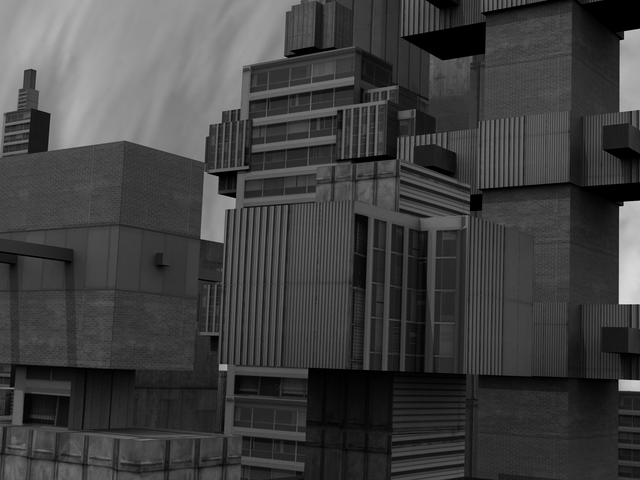}
	}
	\subfloat{
		\includegraphics[width=2.2cm]{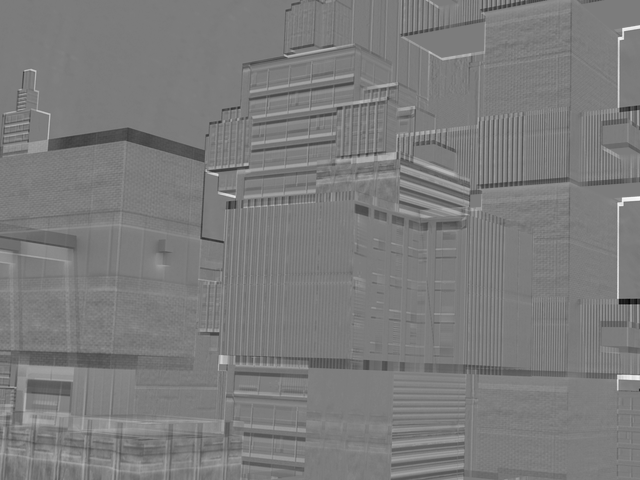}
	}
        \subfloat{
		\includegraphics[width=2.2cm]{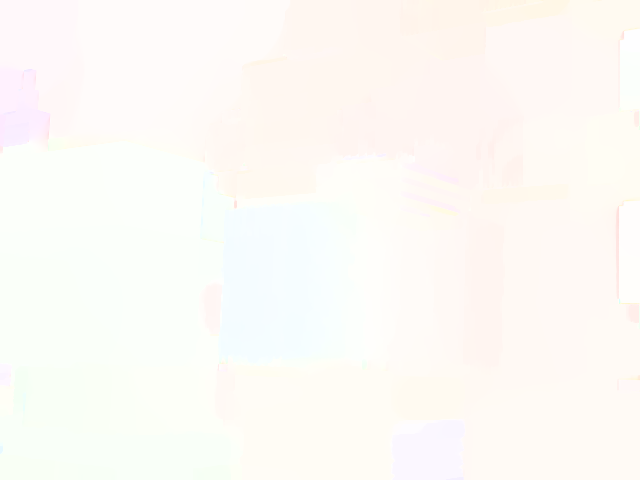}
	}
        \subfloat{
		\includegraphics[width=2.2cm]{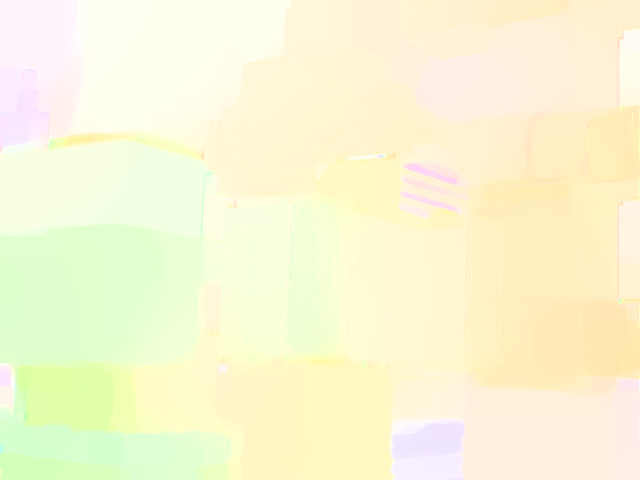}
	}
        \subfloat{
		\includegraphics[width=2.2cm]{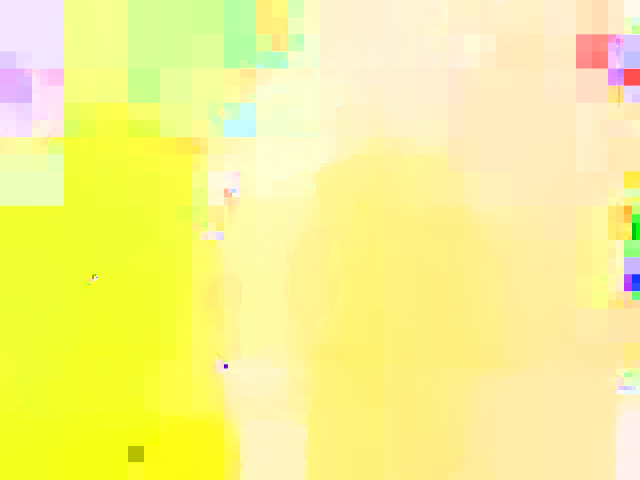}
	}
        \subfloat{
		\includegraphics[width=2.2cm]{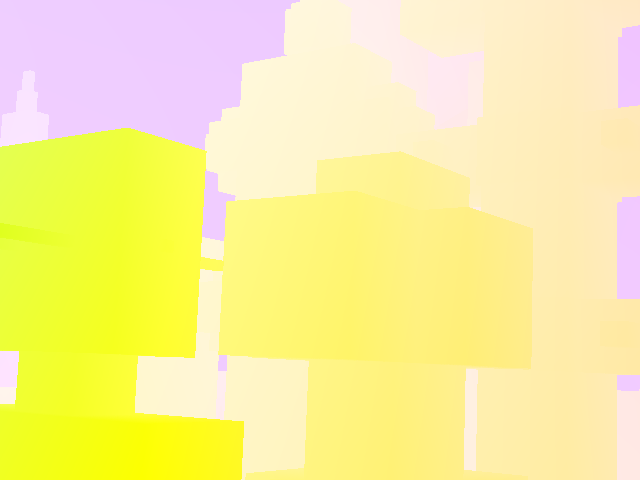}
	}
        \quad
        \subfloat{
		\includegraphics[width=2.2cm]{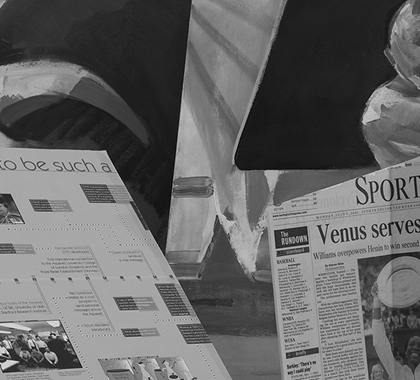}
	}
	\subfloat{
		\includegraphics[width=2.2cm]{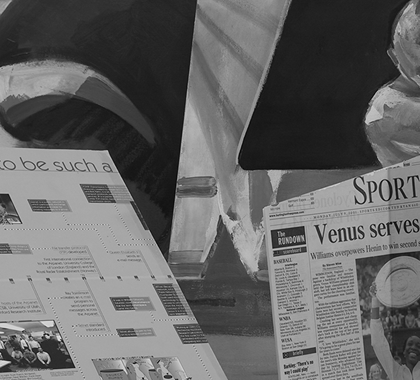}
	}
	\subfloat{
		\includegraphics[width=2.2cm]{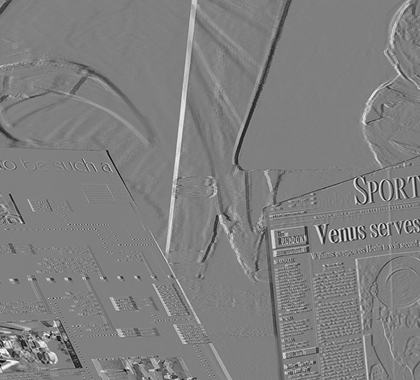}
	}
        \subfloat{
		\includegraphics[width=2.2cm]{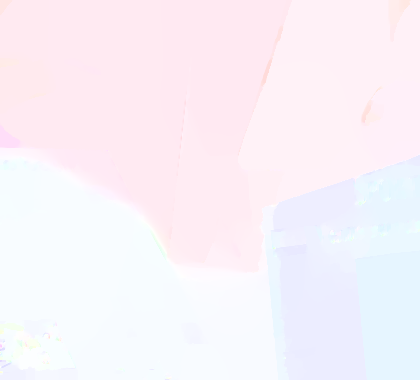}
	}
        \subfloat{
		\includegraphics[width=2.2cm]{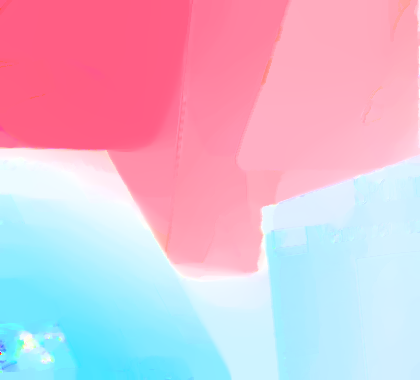}
	}
        \subfloat{
		\includegraphics[width=2.2cm]{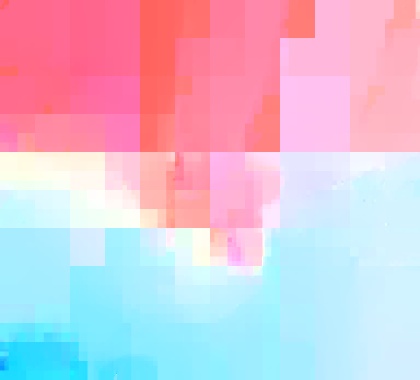}
	}
        \subfloat{
		\includegraphics[width=2.2cm]{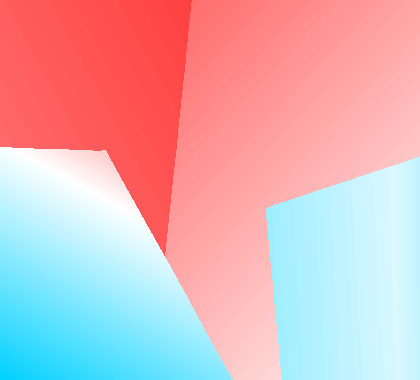}
	}
 
	\caption{Middlebury Optical Flow Benchmark; columns from left to right: $f_0$, $f_1$, image difference $f_0-f_1$, computed optical flow $\mathbf{u}$ using without warping and with warping, computed optical flow $\mathbf{u}$ using Algorithm \ref{algo:warping-while-ctf} (adaptive warping), ground truth optical flow. Benchmarks from top to bottom: \textit{Dimetrodon}, \textit{Grove2}, \textit{Grove3}, \textit{Hydrangea}, \textit{RubberWhale}, \textit{Urban2}, \textit{Urban3}, \textit{Venus}.}
	\label{fig:optical-flow}
\end{figure}

\def\ofwidth{3.5cm}
\begin{figure}[!htbp]
	\centering
	\subfloat[dofs: 54]{
		\includegraphics[width=\ofwidth]{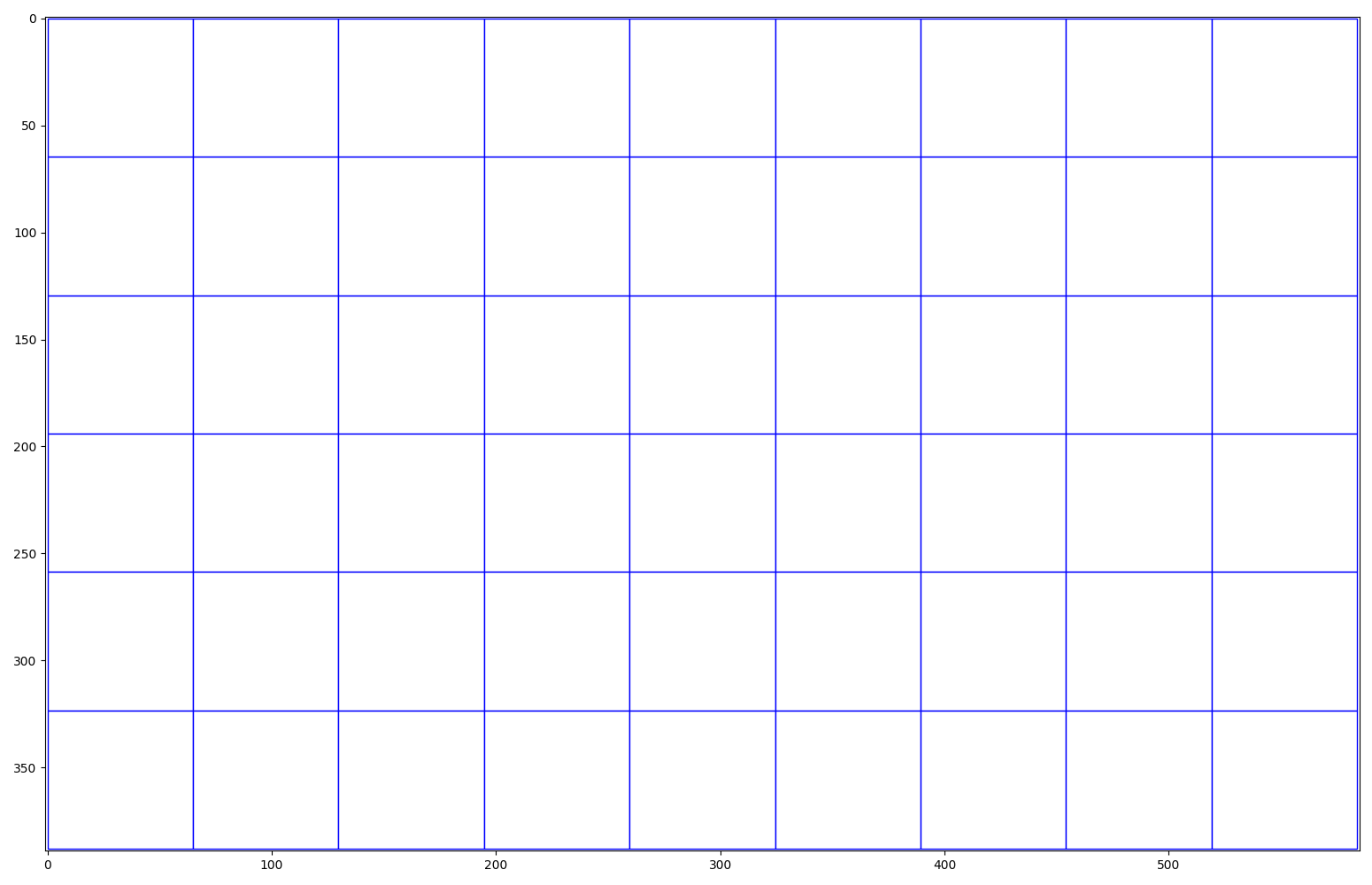}
	}
	\subfloat[dofs: 174]{
		\includegraphics[width=\ofwidth]{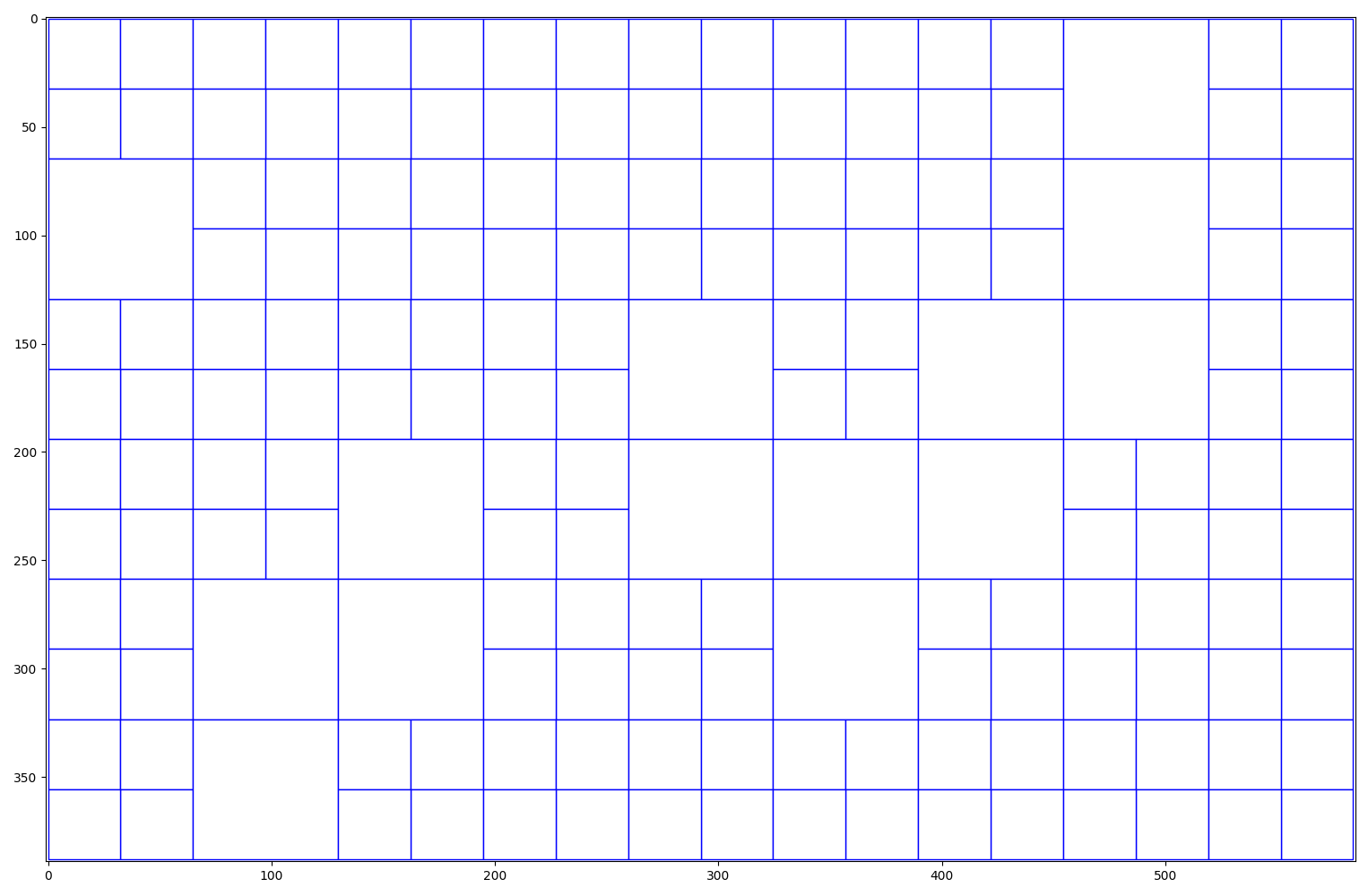}
	}
	\subfloat[dofs: 597]{
		\includegraphics[width=\ofwidth]{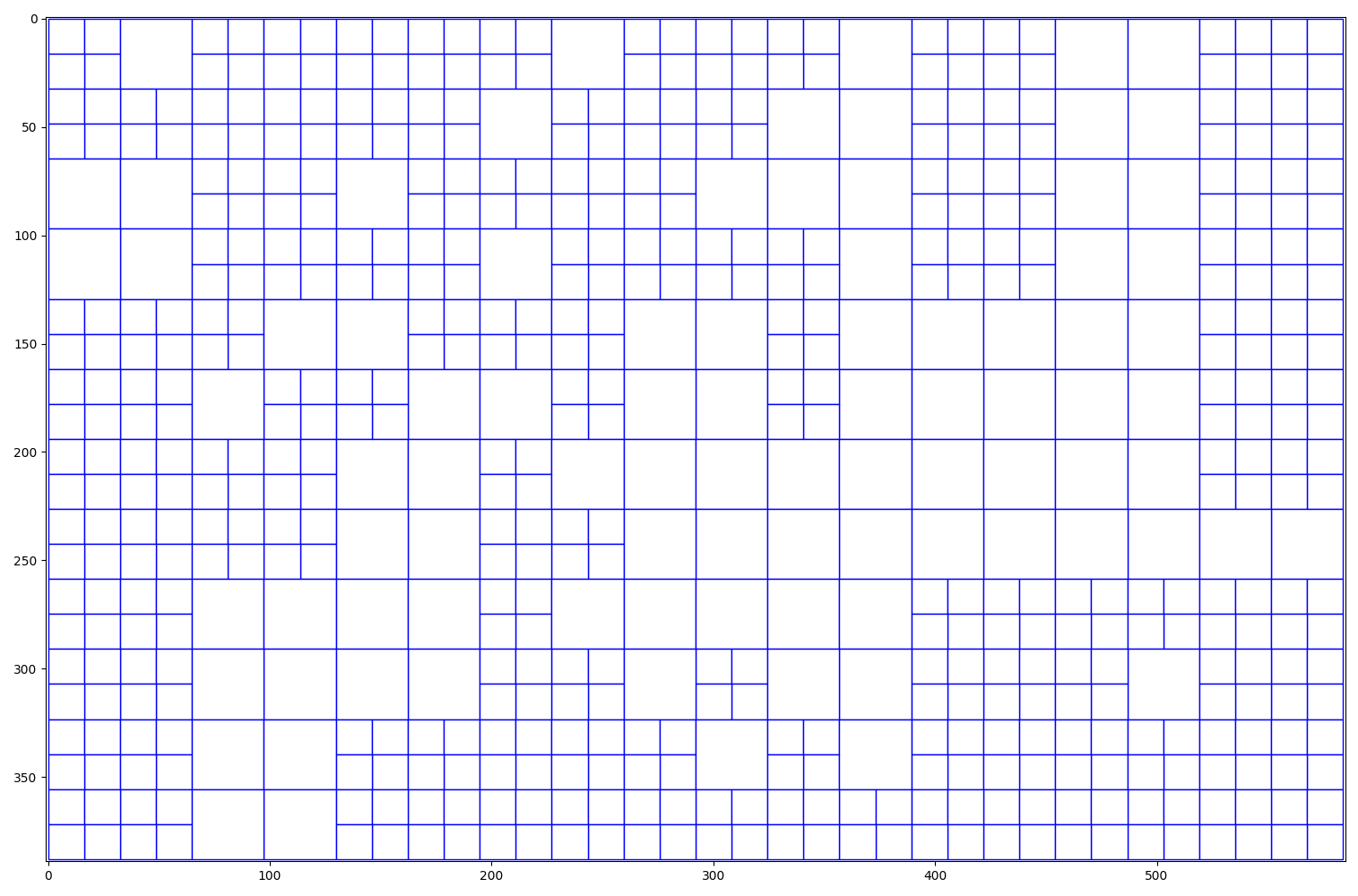}
	}
	\subfloat[dofs: 2,043]{
		\includegraphics[width=\ofwidth]{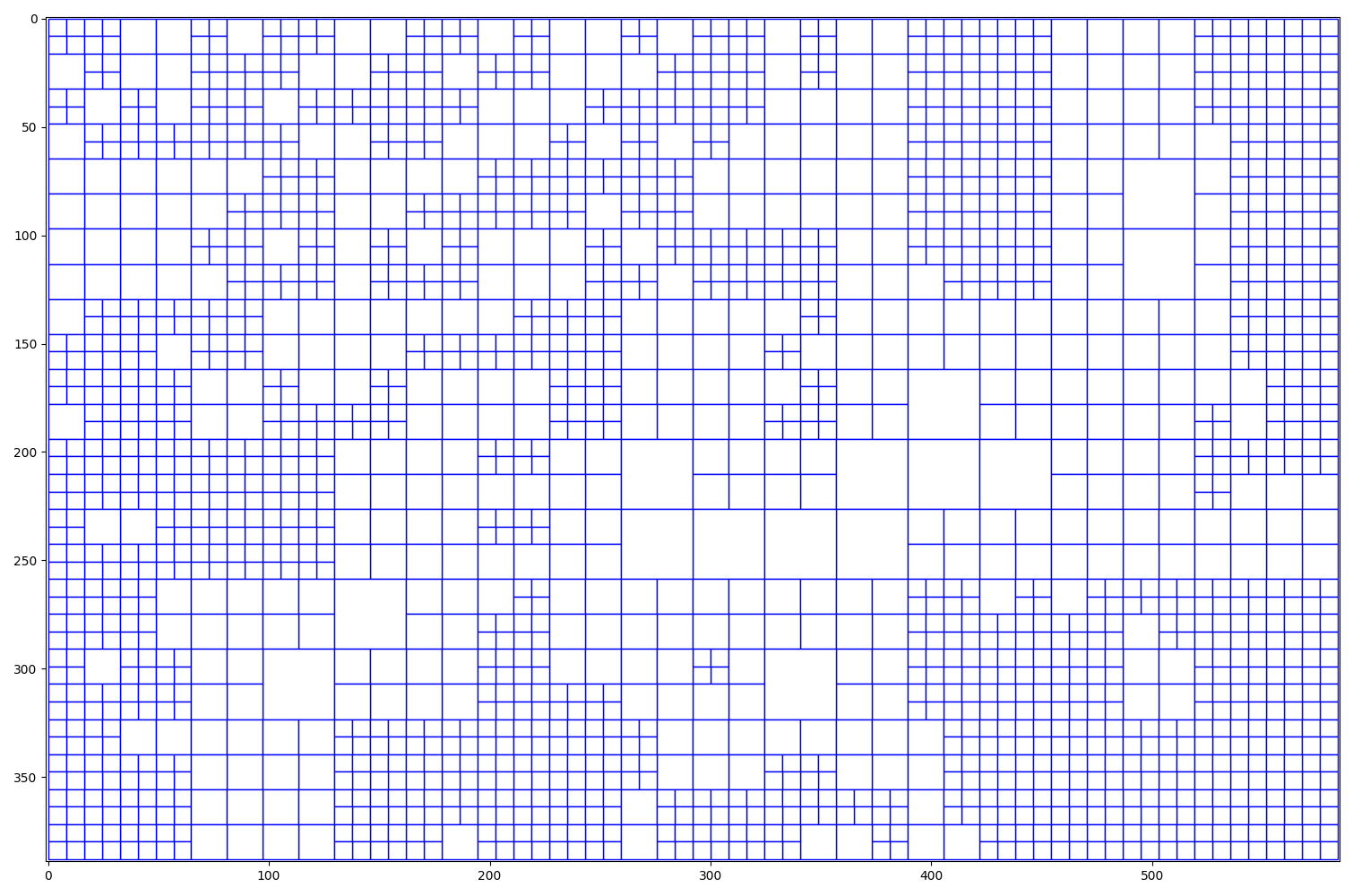}
	}
	
	\subfloat[dofs: 6,933]{
		\includegraphics[width=\ofwidth]{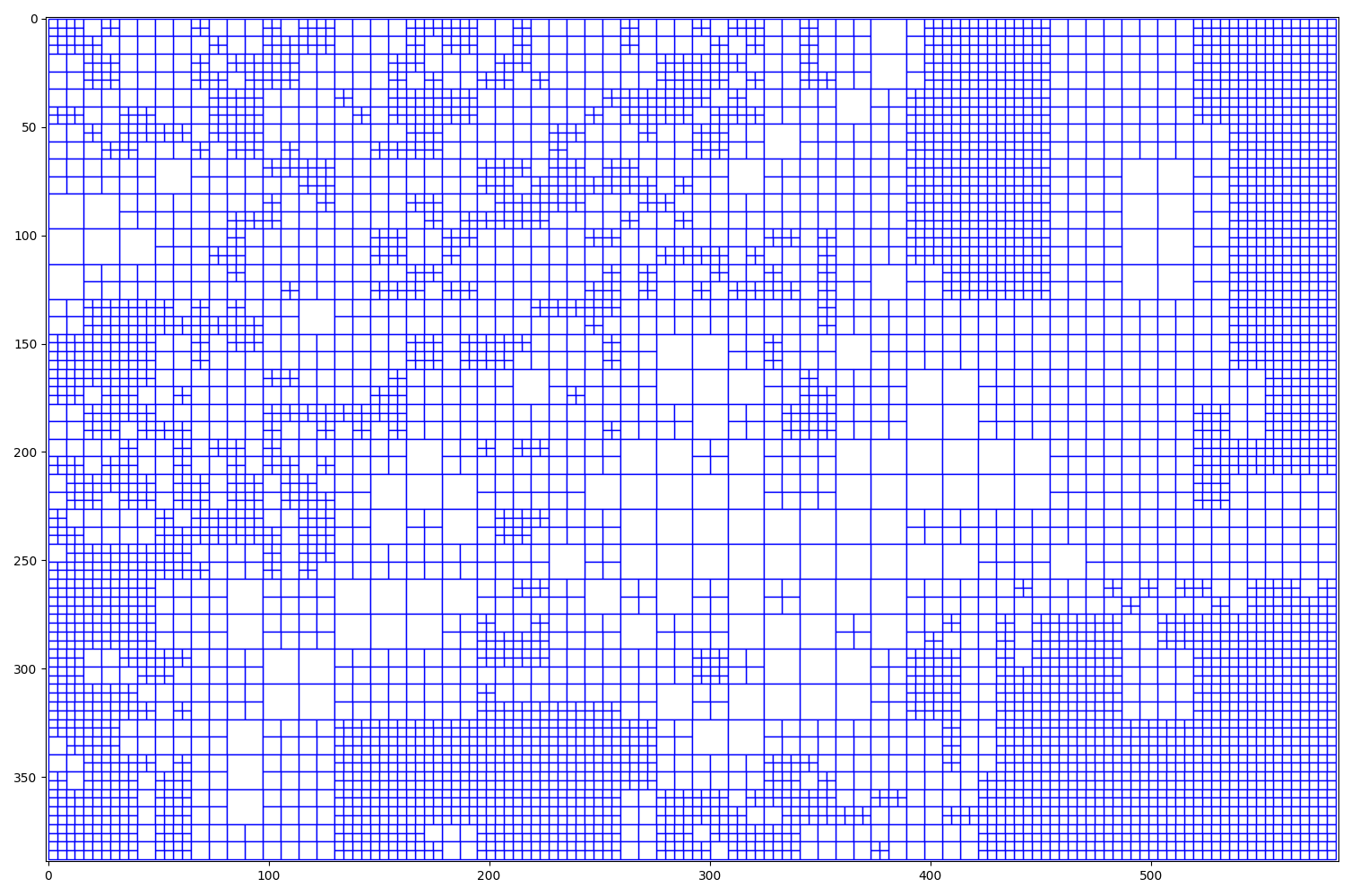}
	}
	\subfloat[dofs: 23,238]{
		\includegraphics[width=\ofwidth]{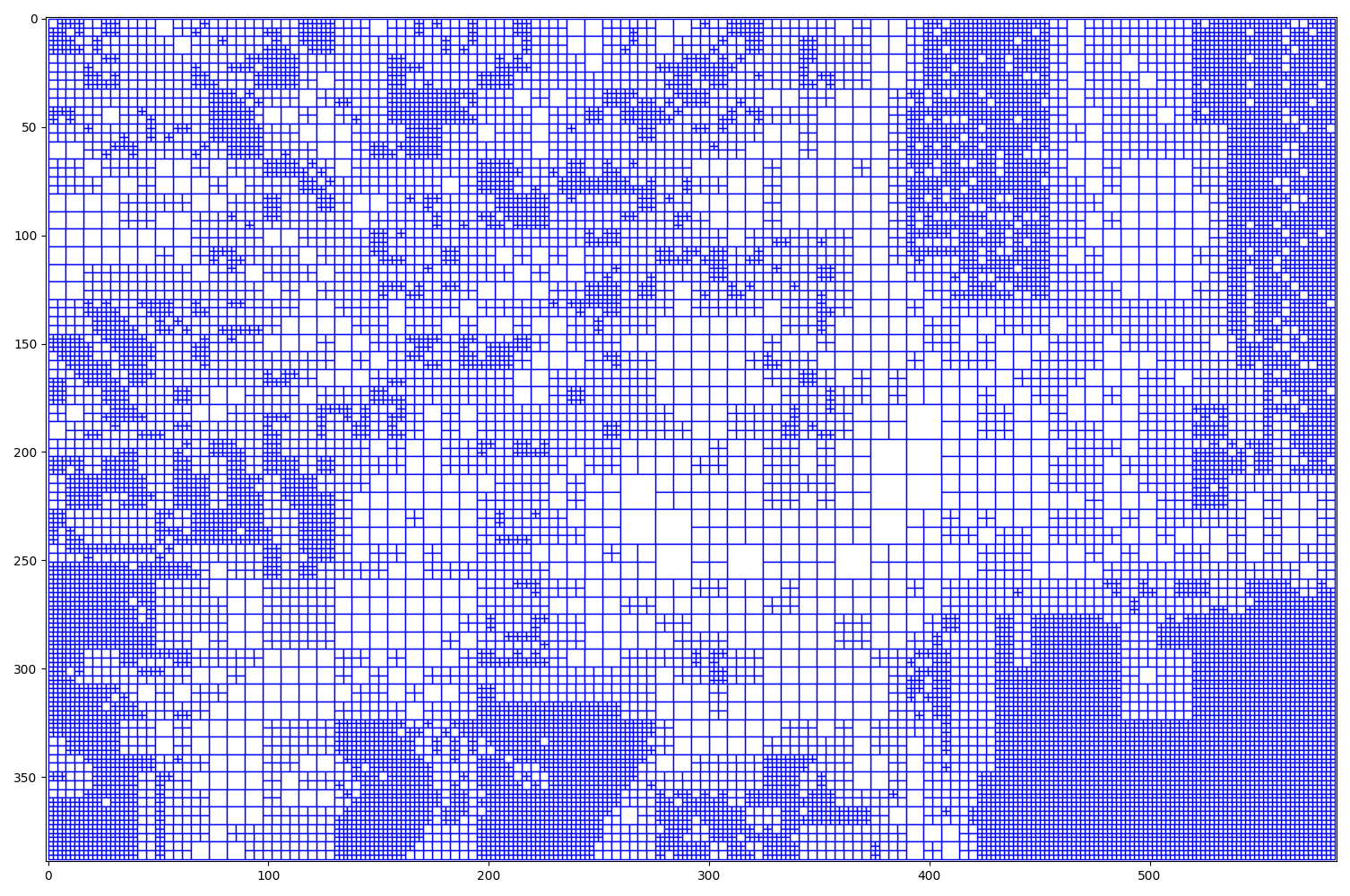}
	}
	\subfloat[dofs: 78,663]{
		\includegraphics[width=\ofwidth]{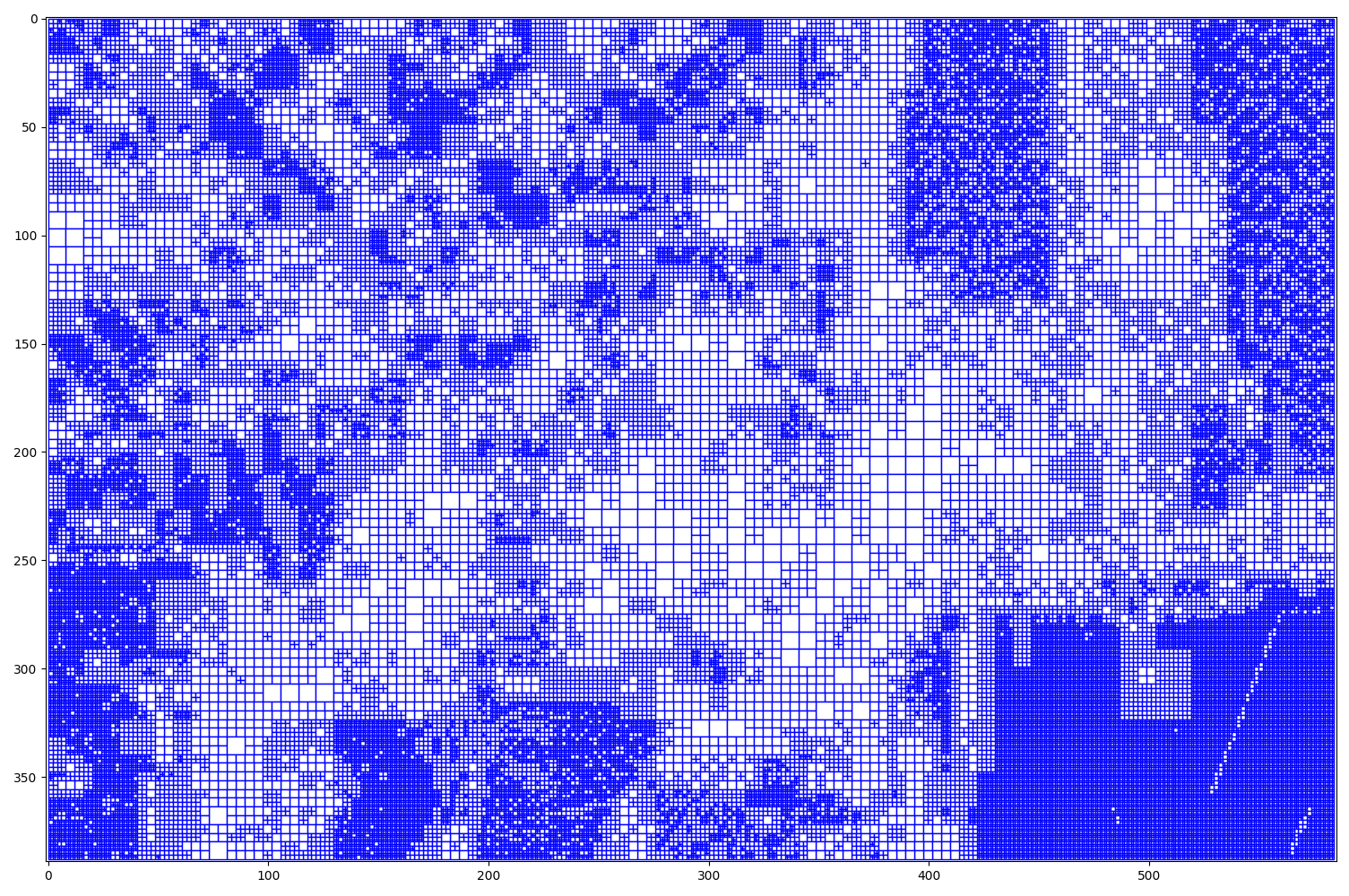}
	}
	
	\caption{Adapted meshes sequence for Algorithm \ref{algo:warping-while-ctf} on the \textit{RubberWhale} example.}
	\label{fig:meshes}
\end{figure}


\section{Conclusion}

We introduced a novel adaptive finite difference scheme to solve the $L^1$-$L^2$-$\TV$ problem numerically, leading to a non-uniform discretization of the image domain. 
From our numerical experiments we conclude that an adaptive meshing in a finite difference setting can be beneficial. In particular the degrees of freedom can be significantly reduced. In the task of estimating the optical flow we could additionally observe that our coarse-to-fine scheme reduces computation time and sometimes even improves the quality of the result as well, promoting our approach.

While adaptively changing the underlying mesh can be advantageous in a computational setting, we also investigated the influence of a refinement on the discrete total variation. It turns out that the discrete total variation is indeed dependent on the grid. In particular, a refinement may increase, but does not decrease, the total variation of a function, cf. Section \ref{sec:analysis-adaptive}. While this property is plausible and obviously has an effect on the minimization of the total variation, it is not clear to us if and how it influences the $\Gamma$-convergence of the discrete total variation to the continuous one.

\bibliographystyle{abbrv}
\bibliography{main}

\appendix

\section{Proof of the Primal-Dual Gap Error Estimation}
\label{app:pd-gap-proof}

The energy $E$ satisfies the following property (see for example \cite[Lemma 3.7]{Hilb2023}): \\

\begin{lemma} \label{lem:B-norm}
	If $\mathbf{u}$ is a minimizer of \eqref{eq:primal}, then we have
	$$ \frac{1}{2} \|\mathbf{v} - \mathbf{u}\|_B^2 \leq E(\mathbf{v}) - E(\mathbf{u}), $$
	for all $\mathbf{v}$ in $H^1(\Omega)^m$.
\end{lemma}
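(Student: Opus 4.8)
The statement to prove is Lemma \ref{lem:B-norm}: if $\mathbf{u}$ minimizes $E$ over $H^1(\Omega)^m$, then $\tfrac12\|\mathbf{v}-\mathbf{u}\|_B^2 \le E(\mathbf{v}) - E(\mathbf{u})$ for all $\mathbf{v}\in H^1(\Omega)^m$.

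\medskip

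The plan is to exploit the strong convexity hidden in the quadratic part of $E$. First I would split $E = F + G$, where $G(\mathbf{w}) := \tfrac{\alpha_2}{2}\|T\mathbf{w}-g\|_{L^2(\Omega)}^2 + \tfrac{\beta}{2}\|S\mathbf{w}\|_{V_S}^2$ collects the two smooth quadratic terms and $F(\mathbf{w}) := \alpha_1\|T\mathbf{w}-g\|_{L^1(\Omega)} + \lambda\int_\Omega|\nabla\mathbf{w}|_{\text{F},r}\,dx$ collects the convex but non-smooth terms. The key algebraic identity is that $G$, being a quadratic functional built from $B = \alpha_2 T^*T + \beta S^*S$, satisfies the exact second-order expansion
$$ G(\mathbf{v}) = G(\mathbf{u}) + \langle G'(\mathbf{u}), \mathbf{v}-\mathbf{u}\rangle + \tfrac12\|\mathbf{v}-\mathbf{u}\|_B^2, $$
since $\langle G'(\mathbf{u}),\mathbf{h}\rangle = \langle \alpha_2 T^*(T\mathbf{u}-g) + \beta S^*S\mathbf{u}, \mathbf{h}\rangle$ and the Hessian of $G$ is exactly $B$. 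I would verify this by a direct computation (expanding $\|T\mathbf{v}-g\|^2$ and $\|S\mathbf{v}\|^2$ around $\mathbf{u}$), noting $\|\mathbf{h}\|_B^2 = \alpha_2\|T\mathbf{h}\|_{L^2}^2 + \beta\|S\mathbf{h}\|_{V_S}^2 \ge 0$.

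\medskip

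Next I would use first-order optimality. Since $\mathbf{u}$ is a minimizer of $E = F + G$ with $G$ Fr\'echet differentiable and $F$ convex, the subdifferential condition $0 \in \partial F(\mathbf{u}) + G'(\mathbf{u})$ holds, i.e. $-G'(\mathbf{u}) \in \partial F(\mathbf{u})$. By the definition of the subdifferential, this gives, for every $\mathbf{v}\in H^1(\Omega)^m$,
$$ F(\mathbf{v}) \ge F(\mathbf{u}) + \langle -G'(\mathbf{u}), \mathbf{v}-\mathbf{u}\rangle. $$
Adding the exact expansion of $G$ to this inequality yields
$$ E(\mathbf{v}) = F(\mathbf{v}) + G(\mathbf{v}) \ge F(\mathbf{u}) - \langle G'(\mathbf{u}),\mathbf{v}-\mathbf{u}\rangle + G(\mathbf{u}) + \langle G'(\mathbf{u}),\mathbf{v}-\mathbf{u}\rangle + \tfrac12\|\mathbf{v}-\mathbf{u}\|_B^2 = E(\mathbf{u}) + \tfrac12\|\mathbf{v}-\mathbf{u}\|_B^2, $$
which is exactly the claimed inequality.

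\medskip

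The main technical obstacle is justifying the optimality condition $-G'(\mathbf{u}) \in \partial F(\mathbf{u})$ in the function-space setting: one must argue that the sum rule for subdifferentials applies (e.g. because $G$ is continuous and convex everywhere on $H^1(\Omega)^m$, so Moreau--Rockafellar applies), and that $F$ is proper, convex and lower semicontinuous on $H^1(\Omega)^m$ so its subdifferential is nonempty at the minimizer. An alternative, avoiding subdifferential calculus, is to use the convex combination $\mathbf{u}_t := \mathbf{u} + t(\mathbf{v}-\mathbf{u})$, exploit $E(\mathbf{u}_t)\ge E(\mathbf{u})$, use convexity of $F$ to get $F(\mathbf{u}_t) \le (1-t)F(\mathbf{u}) + tF(\mathbf{v})$, expand $G(\mathbf{u}_t)$ exactly (it is quadratic), divide by $t$ and let $t\to 0^+$; this yields the directional derivative inequality $\langle G'(\mathbf{u}),\mathbf{v}-\mathbf{u}\rangle + F(\mathbf{v}) - F(\mathbf{u}) \ge 0$, and then one recovers the result by evaluating $E(\mathbf{v})-E(\mathbf{u})$ and using the exact quadratic expansion of $G$ once more, with the nonnegative remainder $\tfrac12\|\mathbf{v}-\mathbf{u}\|_B^2$ left over. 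I would present the second, more elementary route to keep the proof self-contained.
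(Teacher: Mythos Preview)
Your proof is correct. The paper does not supply its own proof of this lemma but simply cites \cite[Lemma~3.7]{Hilb2023}; your argument---splitting off the quadratic part $G$ whose Hessian is exactly $B$, then using either the subdifferential optimality condition $-G'(\mathbf{u})\in\partial F(\mathbf{u})$ or the equivalent convex-combination route---is the standard strong-convexity argument and is precisely what appears in the cited reference (and, incidentally, in a commented-out proof sketch in the paper's source for a simpler model problem).
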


Now we give the proof of Proposition \ref{prop:pd-gap-error}.
\begin{proof}[Proof of Proposition \ref{prop:pd-gap-error}]
	We recall that $I_h$ denotes the piecewise constant interpolation. By Lemma \ref{lem:B-norm} we have,
	\begin{align*}
		\frac{1}{2}\|I_h \mathbf{u_h}-\mathbf{u}\|_B^2 & \leq E(I_h \mathbf{u_h}) - E(\mathbf{u}).
	\end{align*}
	The strong duality of the continuous problems leads to,
	\begin{align*}
		\frac{1}{2}\|I_h \mathbf{u_h}-\mathbf{u}\|_B^2 & \leq E(I_h \mathbf{u_h}) - D(p_1,\mathbf{p_2}) \leq E(I_h \mathbf{u_h}) - D(I_h p_{h,1},I_h \mathbf{p_{h,2}}) \\
		& = \eta_h(\mathbf{u_h}, p_{h,1},\mathbf{p_{h,2}}) + \big( E(I_h \mathbf{u_h}) - E_h(\mathbf{u_h}) \big) - \big( D(I_h p_{h,1},I_h \mathbf{p_{h,2}}) - D_h(p_{h,1},\mathbf{p_{h,2}}) \big) \\
		& \leq \eta_h(\mathbf{v_h}, q_{h,1},\mathbf{q_{h,2}}) + \big( E(I_h \mathbf{u_h}) - E_h(\mathbf{u_h}) \big) + \big( D_h(p_{h,1},\mathbf{p_{h,2}}) - D(I_h p_{h,1},I_h \mathbf{p_{h,2}}) \big).
	\end{align*}
	The last term in the above inequality can be written as
	\begin{multline} \label{eq:dual-diff}
		D_h(p_{h,1},\mathbf{p_{h,2}}) - D(I_h p_{h,1},I_h \mathbf{p_{h,2}}) = \frac{\alpha_2}{2}\Big( \|g_h\|_{L^2(\Omega_h)}^2 - \|g\|_{L^2(\Omega)}^2 \Big) + \Big( \langle g, I_h p_{h,1} \rangle_{L^2(\Omega)} - \langle g_h, p_{h,1} \rangle_{L^2(\Omega_h)} \Big) \\ +\frac{1}{2}\Big(\|T^* I_h p_{h,1} + \nabla^*I_h \mathbf{p_{h,2}} + \alpha_2 T^* g\|_{B^{-1}}^2 - \| T_h^* q_{h,1} + \nabla_h^*\mathbf{q_{h,2}} - \alpha_2 T_h^* g_h \|_{B_h^{-1}}^2 \Big).
	\end{multline}
	The first term can be bounded from above as follows
	\begin{align*}
		\|g_h\|_{L^2(\Omega_h)}^2 - \|g\|_{L^2(\Omega)}^2 & = \sum_{i=1}^N h_i^2 (I_h g_h)(x_i)^2 - \|g\|_{L^2(\Omega)}^2 
		 = \sum_{i=1}^N \int_{E_i} (I_h g_h)(x_i)^2\ dx - \|g\|_{L^2(\Omega)}^2 \\
		& = \sum_{i=1}^N \int_{E_i} (I_h g_h)^2\ dx - \|g\|_{L^2(\Omega)}^2
		 = \|I_h g_h\|_{L^2(\Omega)}^2 - \|g\|_{L^2(\Omega)}^2 \\
		& \leq \|g-I_h g_h\|_{L^2(\Omega)} \|g + I_h g_h\|_{L^2(\Omega)}. 
	\end{align*}
	Since by definition, $I_h g_h$ is the $L^2$-projection of $g$ into the space of piecewise constant functions, it follows using \cite[Lemma 6.54]{aliprantis_infinite_2006} that $\|I_h g_h\|_{L^2(\Omega)}\leq \|g\|_{L^2(\Omega)}$ and thus
	$$ \|g_h\|_{L^2(\Omega_h)}^2 - \|g\|_{L^2(\Omega)}^2 \leq 2\|g\|_{L^2(\Omega)} \|g-I_h g_h\|_{L^2(\Omega)}. $$
	For the second term in \eqref{eq:dual-diff}, we have
	\begin{align*}
		\langle g, I_h p_{h,1} \rangle_{L^2(\Omega)} - \langle g_h, p_{h,1} \rangle_{L^2(\Omega_h)} & = \langle g, I_h p_{h,1} \rangle_{L^2(\Omega)} - \langle I_h g_h, I_h p_{h,1} \rangle_{L^2(\Omega)} 
		 = \langle g - I_h g_h, I_h p_{h,1} \rangle_{L^2(\Omega)} \\
		& \leq \|I_h p_{h,1}\|_{L^2(\Omega)}\|g-I_h g_h\|_{L^2(\Omega)} 
		 \leq \alpha_1|\Omega|\|g-I_h g_h\|_{L^2(\Omega)}.
	\end{align*}
	So that
	$$ D_h(p_{h,1},\mathbf{p_{h,2}}) - D(I_h p_{h,1},I_h \mathbf{p_{h,2}}) \leq  c\|g-I_h g_h\|_{L^2(\Omega)} + Q_h(p_{h,1},\mathbf{p_{h,2}}) $$
	with $c\geq 0$, where
	\begin{multline*}
		Q_h(p_{h,1},\mathbf{p_{h,2}}):=\frac{1}{2}\Big(\|T^* I_h p_{h,1} + \nabla^*I_h \mathbf{p_{h,2}} + \alpha_2 T^* g\|_{B^{-1}}^2 - \| T_h^* q_{h,1} + \nabla_h^*\mathbf{q_{h,2}} - \alpha_2 T_h^* g_h \|_{B_h^{-1}}^2 \Big).
	\end{multline*}
\end{proof}

\end{document}